\documentclass[11pt]{amsart}
\usepackage{amssymb, amstext, amscd, amsmath, amssymb}
\usepackage{mathtools, paralist, color, dsfont, rotating}
\usepackage{verbatim}
\usepackage{enumerate,enumitem}
\usepackage{float}
\usepackage{setspace}
\usepackage{pifont}

\usepackage{tikz}
\usetikzlibrary{fit,positioning,arrows,automata,calc, graphs}
\tikzset{
  main/.style={circle, minimum size = 30pt, thick, draw = black!80, node distance = 10mm},
  connect/.style={-latex, thick},
  box/.style={rectangle, draw = white!100}
}
\usepackage{tikz-cd}

\usepackage{graphicx}

\numberwithin{equation}{section}
\usepackage[a4paper]{geometry}
\usepackage{quoting}
\quotingsetup{vskip=.1in}
\quotingsetup{leftmargin=.6in} 
\quotingsetup{rightmargin=.6in}

\let\OLDthebibliography\thebibliography
\renewcommand\thebibliography[1]{
  \OLDthebibliography{#1}
  \setlength{\parskip}{0pt}
  \setlength{\itemsep}{2pt plus 0.5ex}
}

\makeatletter
\def\@cite#1#2{{\m@th\upshape\bfseries%
[{#1\if@tempswa{\m@th\upshape\mdseries, #2}\fi}]}}
\makeatother
\theoremstyle{plain}
\newtheorem{theorem}{Theorem}[section]
\newtheorem{corollary}[theorem]{Corollary}
\newtheorem{proposition}[theorem]{Proposition}
\newtheorem{lemma}[theorem]{Lemma}
\theoremstyle{definition}
\newtheorem{definition}[theorem]{Definition}
\newtheorem{example}[theorem]{Example}

\newtheorem{remark}[theorem]{Remark}

\theoremstyle{remark}

\mathtoolsset{centercolon}
  \newcommand{\A}{{\mathcal{A}}}
  \newcommand{\B}{{\mathcal{B}}}
  \newcommand{\C}{{\mathcal{C}}}

  \newcommand{\F}{{\mathcal{F}}}
  \newcommand{\G}{{\mathcal{G}}}
\renewcommand{\H}{{\mathcal{H}}}

  \newcommand{\K}{{\mathcal{K}}}

  \newcommand{\M}{{\mathcal{M}}}
  \newcommand{\N}{{\mathcal{N}}}

\renewcommand{\P}{{\mathcal{P}}}
  
  \newcommand{\R}{{\mathcal{R}}}
\renewcommand{\S}{{\mathcal{S}}}
  \newcommand{\T}{{\mathcal{T}}}

  \newcommand{\X}{{\mathcal{X}}}
  \newcommand{\Y}{{\mathcal{Y}}}
  
\newcommand{\eps}{\varepsilon}
\def\al{\alpha}
\def\be{\beta}

\def\ga{\gamma}
\def\De{\Delta}
\def\de{\delta}
\def\ze{\zeta}
\def\io{\iota}

\def\la{\lambda}

\newcommand\vphi{\varphi}

\newcommand{\bC}{\mathbb{C}}

\newcommand{\bN}{\mathbb{N}}

\newcommand{\fC}{{\mathfrak{C}}}

\newcommand{\fH}{{\mathfrak{H}}}

\newcommand{\foral}{\text{ for all }}
\newcommand{\qand}{\quad\text{and}\quad}

\newcommand{\ca}{\mathrm{C}^*}

\newcommand{\cenv}{\mathrm{C}^*_{\textup{env}}}

\newcommand{\ol}{\overline}
\newcommand{\wt}{\widetilde}
\newcommand{\wh}{\widehat}

\newcommand{\ad}{\operatorname{ad}}

\newcommand{\ran}{\operatorname{ran}}

\newcommand{\spn}{\operatorname{span}}

\newcommand{\sca}[1]{\left\langle#1\right\rangle} 
\newcommand{\bo}[1]{\mathbf{#1}} 

\newcommand{\tes}[7]{
	\xymatrix@C=2cm@R=1.5cm{
		K_0\left(#1\right) \ar[r]^{#2} & K_0\left(#3\right) \ar[r]^{#4} & K_0\left(#5\right) \ar[d] \\
		K_1\left(#5\right) \ar[u] & K_1\left(#3\right) \ar[l]^{#7} & K_1\left(#1\right) \ar[l]^{#6}
	}
}

\addtocontents{toc}{\protect\setcounter{tocdepth}{1}}

\begin{document}

\title[Morita equivalence for quantum graphs]{Morita equivalence for  quantum graphs}

\author[A. Chatzinikolaou]{Alexandros Chatzinikolaou}
\address{Institut Mittag-Leffler\\ The Royal Swedish Academy of Sciences \\ Djursholm\\ Sweden}
\email{achatzinik.math@gmail.com}

\author[G. Hoefer]{Gage Hoefer}
\address{Department of Mathematics \\ Dartmouth College \\ Hanover \\ NH 03755 \\ USA}\email{gage.hoefer@dartmouth.edu}

\author[N. Koutsonikos-Kouloumpis]{Nikolaos Koutsonikos-Kouloumpis}
\address{Department of Mathematics\\ University of Patras\\
Patras\\ 265 04\\ Greece}
\email{nkoutsonik@ac.upatras.gr}

\author[I.A. Paraskevas]{Ioannis Apollon Paraskevas}
\address{Department of Mathematics\\ National and Kapodistrian University of Athens\\ Athens\\ 1578 84\\ Greece}
\email{ioparask@math.uoa.gr}

\thanks{2020 {\it  Mathematics Subject Classification.} 16D90, 46L07, 47L25, 81P45}

\thanks{{\it Key words and phrases:} Quantum graphs, Morita equivalence, ternary rings of operators, operator systems, graph parameters.}

\date{\today}

\begin{abstract}
We introduce an operator-algebraic framework for Morita equivalence of quantum graphs based on $\Delta$-equivalence of operator systems introduced by Eleftherakis, Kakariadis and Todorov. Adopting the perspective of Weaver, we view quantum graphs as quantum relations, that is, operator systems endowed with a bimodule structure over the commutant of a von Neumann algebra. Within this framework, we show that two irreducibly acting quantum graphs are Morita equivalent if and only if they are both full pullbacks of a common quantum graph. This extends a result of Eleftherakis, Kakariadis and Todorov for graph operator systems to the quantum graph setting. In passing we construct a true-twin reduction analogue for an irreducibly acting quantum graph. We further characterise the case where we have simultaneous TRO-equivalence of the quantum graphs and their associated algebras, thus giving a second, stronger notion of Morita equivalence.
In the special case of noncommutative graphs, corresponding to the zero-error quantum communication setting, 
the two notions coincide and we obtain a characterisation in terms of strong co-homomorphisms of noncommutative graphs. Finally, we show that connectivity, the independence number, Shannon capacity, quantum complexity and subcomplexity, Haemers bound, and the Lov\'asz number are invariant under Morita equivalence. 
\end{abstract}

\maketitle


\section{Introduction}
\subsection{Quantum graphs as quantum relations}
Quantum graphs have attracted significant attention in recent years due to their deep connections with operator algebras, quantum information theory, quantum groups, and noncommutative geometry. They were first introduced by Duan, Severini and Winter \cite{DSW13} in the study of noncommutative confusability graphs of quantum channels, where they were modeled as operator systems $\S \subseteq M_n$. Around the same time, Weaver \cite{Wea12} introduced the notion of quantum graphs as quantum relations, and subsequently Musto, Reutter and Verdon \cite{MRV18} developed a categorical framework via quantum adjacency operators.  These approaches were later shown to be equivalent; see \cite{Daws24, Was24}.

Classically, a graph can be viewed as a relation $E \subseteq V \times V$ that is symmetric (undirected) and reflexive (with loops at every vertex). The noncommutative analogue of a relation is given by a bimodule over a maximal abelian selfadjoint algebra (MASA). This correspondence originates in the work of Erd{o}s, Katavolos and Shulman \cite{EKS98} and in the work of Shulman and Turowska \cite{ST04}. More precisely, given standard measure spaces $(X,\mu)$ and $(Y,\nu)$, every subset $\kappa \subseteq X \times Y$ determines a subspace $\M(\kappa) \subseteq \B(L^2(X,\mu), L^2(Y,\nu))$ consisting of operators supported on $\kappa$, and this yields an $L^\infty(X,\mu)$-$L^\infty(Y,\nu)$-bimodule. Conversely, every such MASA-bimodule arises in this way (see \cite[Theorems 4.1 and 4.2]{EKS98}). In finite dimensions, this correspondence becomes particularly concrete: a relation $\kappa \subseteq [n] \times [n]$ corresponds to the subspace
\[
\M(\kappa) = \operatorname{span}\{\varepsilon_{i,j} : (i,j) \in \kappa\} \subseteq M_n,
\]
which is a $D_n$-bimodule, and every $D_n$-bimodule is of this form \cite[Proposition 1.1]{PPS89}.

In this work, we adopt Weaver’s perspective \cite{Wea12, Wea21}, where they defined a \emph{quantum} relation on a von Neumann algebra $\A \subseteq \B(H)$ to be a weak*-closed operator space $\X \subseteq \B(H)$ that is an $\A'$-bimodule. A \emph{quantum graph} is then a symmetric ($\S = \S^*$) and reflexive ($\A' \subseteq \S$) quantum relation; equivalently, an operator system $\S \subseteq \B(H)$ that is a bimodule over $\A'$. This framework recovers classical and noncommutative examples. When $\A= D_n$, every quantum graph arises from a classical graph $\G$ as a graph operator system $\S_{\G} = \operatorname{span}\{\varepsilon_{i,j} : i \simeq_{\G} j\}.$
When $\A = M_n$, a quantum graph is simply an operator system $\S \subseteq M_n$, corresponding to the noncommutative graphs associated with quantum channels as shown in \cite[Lemma~2]{DUAN09}. We note that in this work we will only deal with quantum graphs acting on finite-dimensional Hilbert spaces.

\subsection{Morita equivalence}
The study of Morita equivalence in the context of noncommutative analysis was introduced by Rieffel \cite{Rief74} in the 1970’s, who reformulated classical Morita theory for rings in the setting of C*- and W*-algebras. Rieffel’s approach proved to be highly effective, as many fundamental results of Morita theory \cite{Bass62} extend to these categories. Morita equivalence, realized as an equivalence between categories of C*-modules (and W*-modules respectively) is strictly weaker than $*$-isomorphisms and under suitable assumptions it coincides with stable isomorphisms \cite{BGR77} (i.e., the stabilizations obtained by tensoring with the compact operators are $*$-isomorphic).

The development of operator space theory paved the way for new applications of Morita theory. Blecher, Muhly and Paulsen \cite{BMP00} defined Morita equivalence of operator algebras, extending Morita equivalence of C*-algebras. Two operator algebras $\A,\B$ are said to be Morita equivalent when there exist appropriate bimodules $\M$ and $\N$ over $\A$ and $\B$ such that
\[
\A\cong \M\otimes_\B \N \qand \B\cong \N\otimes_\A \M.
\]
Following the same direction, Blecher and Kashyap \cite{BK08} defined weak Morita equivalence for dual operator algebras.

During that time, significant attention was drawn to ternary rings of operators (TROs) that is, operator spaces $\X \subseteq \B(H)$ satisfying $\X \X^* \X \subseteq \X$. Such structures arise naturally in the context of Morita equivalence of C*-algebras, as well as in the study of normalisers of reflexive algebras \cite{KT03}.
Using these objects, Eleftherakis introduced TRO-equivalence \cite{Ele12}. A key advantage of TRO-equivalence is that it captures the selfadjoint structure, to the extent that it is present in an operator algebra. Building on this framework, Eleftherakis introduced $\Delta$-equivalence \cite{Ele08}, a Morita type equivalence that generalizes Rieffel’s theory, and which coincides with stable isomorphism in the case of dual operator algebras \cite{EP08}, as well as for operator algebras \cite{Ele16}, under the additional assumption of the existence of a $\sigma$-unit.

Moreover, the nature of $\Delta$-equivalence allowed for further extensions of the theory to operator spaces and operator systems. This was first carried out by Eleftherakis, Paulsen, and Todorov \cite{EPT10}, who established its connection with stable isomorphism for dual operator spaces. In that work, it was shown that $\Delta$-equivalence is well suited to capturing the multiplicative structure carried by operator spaces, as reflected in their multiplier algebras \cite[Theorem 2.13]{EPT10}.

Eleftherakis and Kakariadis \cite{EK17} extended these results to operator spaces by proving that $\Delta$-equivalent operator spaces have TRO-equivalent TRO-envelopes \cite[Theorem 5.10]{EK17}, and in the unital case TRO-equivalent C*-envelopes \cite[Theorem 4.11]{EK17}. In \cite[Theorem 4.3]{EK17}, it is shown that stable isomorphism implies $\Delta$-equivalence. Moreover, under additional natural assumptions such as (approximate) unitality \cite[Corollary 4.7]{EK17} or separability \cite[Corollary 4.8]{EK17}, the two notions coincide.
Subsequently, Eleftherakis, Kakariadis, and Todorov \cite{EKT21} introduced $\Delta$-equivalence for operator systems and showed that $\Delta$-equivalent operator systems admit TRO-equivalent representations inside their C*-envelopes. These representations further restrict to a TRO-equivalence between their corresponding multiplier algebras \cite[Proposition 3.9]{EKT21}. Furthermore, they proved that, in this setting, $\Delta$-equivalence coincides with stable isomorphism.

In addition, Eleftherakis, Kakariadis, and Todorov \cite{EKT25} provide a characterisation of Morita equivalence in the operator system category via a tensorial decomposition, where the analytic structure is induced by symmetrisation. This result establishes an operator system analogue of the factorisation Morita theorem in other categories.
Finally, in recent work of the third-named author \cite{Kou25}, $\Delta$-equivalence and stable isomorphism are shown to be equivalent in the setting of dual operator systems, i.e., operator systems that are also dual operator spaces.

\subsection{Morita equivalence for quantum graphs}
Our goal is to develop a notion of Morita equivalence for quantum graphs and to identify the structural features that are preserved under this equivalence. Links between Morita theory and quantum graph isomorphisms were established by Musto, Reutter and Verdon \cite{MRV19}. In particular, it was shown that graphs quantum isomorphic to a given graph are in bijective correspondence with Morita equivalence classes of certain Frobenius algebras in the category of finite-dimensional representations of the quantum automorphism algebra of that graph. Our motivation comes from \cite[Corollary 7.6]{EKT21}, where Eleftherakis, Kakariadis and Todorov proved that for graphs $\G$ and $\H$,  the following are equivalent:
\begin{enumerate}
\item $\S_{\G} \sim_{\rm TRO} \S_{\H}$;
\item $\S_{\G} \sim_{\Delta} \S_{\H}$;
\item $\G$ and $\H$ are full pullbacks of isomorphic graphs.
\end{enumerate}

A graph $\G$ is said to be a \emph{pullback} of $\H$ if there exists a map
\[
f \colon V(\G)\to V(\H) \quad \text{such that} \quad  x \simeq_{\G} y \iff f(x) \simeq_{\H} f(y).
\]
Such a map is called a \emph{pullback}. If moreover, $f$ is surjective 
it is called a \emph{full} pullback.

We show that the property of being full pullbacks of isomorphic graphs corresponds precisely to being clique blow-ups of isomorphic graphs or equivalently having an isomorphism between the skeletons of the graphs (see Proposition \ref{P:Redgr} and Remark \ref{rmk:cliqueblow}). We define the \emph{skeleton} of a graph $\G$ to be the graph $\G^\circ$ obtained by identifying vertices that are true twins (vertices which have the same closed neighborhoods), and declaring two resulting vertices to be adjacent whenever the original vertices are adjacent. Any graph $\G$ is a full pullback of its skeleton $\G^\circ$. In addition, we show that the multiplier algebra  of a graph operator system admits a canonical decomposition into a direct sum of matrix blocks where each block corresponds to a true-twin equivalence class (see Proposition \ref{P:twin}). 

The main purpose of this work is to  extend the result of \cite{EKT21} to arbitrary quantum graphs.
Our first objective is to quantize the notion of the pullback between graphs (see Definition \ref{D:pullbackmap} and Proposition \ref{prop:classical-weaver-pullback}). The natural candidate for a \emph{pullback between quantum graphs} $(\T,\B)$ and $(\S,\A)$ turns out to be a unital $*$-homomorphism $\theta\colon \B \to \A$ with a Kraus representation  
\[
\theta(x) = \sum_{i=1}^rv_i^*xv_i \quad \text{such that} \quad v_i^*\T v_j \subseteq \S  \qand v_i\S v_j^* \subseteq \T.
\]
A \emph{full pullback} is a pullback that is moreover a faithful $*$-homomorphism. In the case that such a $\theta$ exists we say that \emph{$(\S,\A)$ is a (full) pullback of $(\T,\B)$}.
The first condition can be thought of--- in the classical case--- as preserving adjacency-or-equality, while the second condition can be thought of as reflecting adjacency-or-equality. The fullness condition corresponds to a surjective pullback function in the classical situation. This notion naturally connects with homomorphisms of quantum graphs, as introduced by Stahlke \cite{Sta16} in the setting of trace-free noncommutative graphs, and with the recent work of \cite{KL26}.

Pullbacks and pushforwards were defined in \cite{Wea12} in the more general context of quantum relations.  Let $(\S,\A)$ and $(\T, \B)$ be quantum graphs acting on finite-dimensional Hilbert spaces $H$ and $K$, and let $\vphi\colon \B\to \A$ be a unital completely positive map with a Kraus representation 
\[
\vphi(b) = \sum_{i=1}^rv_i^*bv_i \foral b\in \B.
\]
The \emph{pushforward of $(\S,\A)$ along $\vphi$}, denoted by
$\S^{\rightarrow\vphi}$, is the $\B'$-operator bimodule generated by
\[
\{v_i s v_j^* : s\in\S \text{ and } i,j=1,\dots,r\}
\subseteq \B(K),
\]
and the \emph{pullback of $(\T, \B)$ along $\vphi$}, denoted by
$\T^{\leftarrow\vphi}$, is the $\A'$-operator bimodule generated by 
\[
\{v_i^*t v_j : t\in\T \text{ and } i,j=1,\dots,r \}
\subseteq \B(H).
\]
In \cite{Daws24} and in \cite{Wea12} it is proved that $\S^{\rightarrow\vphi}$ and $\T^{\leftarrow\vphi}$ are independent of the Kraus representation.
In Theorem \ref{thm:ucppullback} we prove that if $\theta$ is a $*$-homomorphism, then $\theta$ is a pullback if and only if $\S=\T^{\leftarrow \theta}$, and moreover $\theta$ is a full pullback if and only if 
$\S=\T^{\leftarrow \theta}$ and $\T=\S^{\rightarrow \theta}$.

Our second objective is to construct the \emph{skeleton of an irreducibly acting quantum graph} as a noncommutative analogue of the skeleton of a graph. We isolate this construction in Subsection~\ref{sec:quantumreduction} as it may be of independent interest, in the hope of contributing to the broader development of noncommutative combinatorics.
This construction retrieves the true-twin reduction of a classical graph (see Proposition~\ref{P:qskgraphs}). In particular, the decomposition of the multiplier algebra isolates multiplicity spaces corresponding to such ``true-twin vertices", and the operator system factors accordingly. The resulting quantum graph $(\R,\C)$ plays the role of the skeleton of $(\S,\A)$, while the original quantum graph is recovered from it via a canonical amplification along these multiplicity spaces. In particular, as in the classical case $(\S,\A)$ is a full pullback of $(\R,\C)$.

\medskip

\noindent
{\bf Theorem A.} (\cite[Theorem 7.5]{EKT21}, Theorem \ref{th:stronghom})
{\it Let $(\S,\A)$ and $(\T, \B)$ be quantum graphs acting irreducibly on finite-dimensional Hilbert spaces $H$ and $K$. The following are equivalent:
\begin{enumerate}
\item $\S \sim_{\rm TRO} \T$.
\item $\S \sim_{\Delta} \T$.
\item There exists a quantum graph $(\R,\C)$ acting on a finite-dimensional Hilbert space such that $(\S, \A)$ and $(\T, \B)$ are full pullbacks of $(\R, \C)$.
\item There exists a C*-algebra $\C \subseteq \B(L)$ on a finite-dimensional Hilbert space $L$ and unital $*$-homomorphisms $\theta\colon \C \to \A$ and $\varrho\colon \C \to \B$ such that
\[
\T^{\rightarrow \varrho} = \S^{\rightarrow \theta}, \quad 
\T = (\S^{\rightarrow \theta})^{\leftarrow \varrho} \qand 
\S = (\T^{\rightarrow \varrho})^{\leftarrow \theta}.
\]
\end{enumerate}
}

\medskip

The irreducibly acting assumption allows us to view the C*-envelope of $\S$, and hence also the multiplier algebra $\A_{\S}$, concretely acting on the same Hilbert space as $\S$, something that plays a central part in our proof. The equivalence [(i) $\Leftrightarrow$ (ii)] was shown in \cite[Theorem 7.5]{EKT21}.
The equivalence [(iii) $\Leftrightarrow$ (iv)] follows from Theorem \ref{thm:ucppullback} where we connect full pullback maps in our sense and the pushforwards and pullbacks of \cite{Wea12}. The implication [(iii) $\Rightarrow$ (i)] relies on Proposition \ref{prop:nondeg_TRO}, and the implication [(i) $\Rightarrow$ (iii)] relies on the skeleton of a quantum graph.

We then proceed to investigate a second, stronger notion of Morita equivalence for quantum graphs via simultaneous TRO-equivalence of the operator systems and their associated algebras. Let $H,K$ be finite-dimensional Hilbert spaces. For C*-algebras $\A\subseteq \B(H)$ and $\B\subseteq \B(K)$ and a unital completely positive map $\vphi \colon \B \to \A$ with Kraus operators $v_i \in \B(H,K), i\in[r]$, we define 
\[
\X_{\vphi}:= \spn\{\B' v_i \A':i\in [r]\}.
\]
In \cite[Proposition 6.1]{KL26} it is proved that $\X_\vphi$ does not depend on the choice of Kraus operators $v_i$ for $i\in [r]$.

\medskip

\noindent
{\bf Theorem B.} (Theorem~\ref{T:qgraphs_balanced})
{ \it Let $(\S,\A)$ and $(\T,\B)$ be quantum graphs acting on finite-dimensional Hilbert spaces
$H$ and $K$, respectively. The following are equivalent:
\begin{enumerate}
\item There is a non-degenerate TRO $\M\subseteq \B(H,K)$ which is also a $\B'$-$\A'$-bimodule implementing the TRO-equivalences
\[
\S \sim_{\rm TRO } \T \qand \A \sim_{\rm TRO } \B.
\]

\item There exists a faithful unital completely positive map $\vphi\colon\B\to \A$ such that
\[
\X_\vphi^*\T \X_\vphi\subseteq \S,\quad \X_\vphi \S \X_\vphi^*\subseteq \T 
\qand
\X_\vphi^*\B \X_\vphi\subseteq \A,\quad \X_\vphi \A \X_\vphi^*\subseteq \B.
\]
\end{enumerate}
}

In the special case of graph operator systems, this stronger notion of Morita equivalence gives graph isomorphism (see Remark \ref{R:graphstrtro}). For noncommutative graphs the two notions of Morita equivalence coincide and the equivalence admits a particularly concrete formulation in terms of completely positive maps. In this setting, TRO-equivalence can be characterised entirely through pullback and pushforward operations associated to a single map.

\medskip

\noindent
{\bf Theorem C.} (Corollary~\ref{T:ncgraphs})
{\it Let $\S \subseteq M_n$ and $\T \subseteq M_m$ be noncommutative graphs.  
The following are equivalent:
\begin{enumerate}
\item $\S \sim_{\rm TRO} \T$.
\item There exists a unital completely positive map $\vphi \colon M_m \to M_n$ such that 
\[
\S = \T^{\leftarrow \vphi} \qand 
\T = \S^{\rightarrow \vphi}.
\]
\end{enumerate}
}

\medskip

A main topic in the study of noncommutative graphs is how to extend classical graph parameters to the operator-algebraic setting, see for example \cite{BTW21,DSW13,GL20,LPT18}. Several basic invariants, including the independence number, clique number, chromatic number, and different capacity-type quantities, have been extended to noncommutative graphs. These noncommutative versions stay closely related to the classical ones while also allowing truly quantum features such as entanglement and nonlocality. It is therefore important to understand how these parameters behave under natural equivalence relations, especially under Morita equivalence, in order to build a strong and flexible theory of quantum graphs.

In Remark \ref{R:chrqli} we show that the chromatic number and the clique number of a noncommutative graph are not invariant under TRO-equivalence.
However, Theorem C makes it possible to compare several noncommutative graph parameters directly under TRO-equivalence.

\medskip

\noindent
{\bf Theorem D.} (Theorem \ref{th:nc_graph_parameter_invariance})
{\it The parameters $\alpha, c_{0}, \vartheta, \wh{\vartheta}, \fH, \beta$ and $\ga$ are invariant under TRO-equivalence of noncommutative graphs. In particular, they are invariant under $\Delta$-equivalence of irreducibly acting noncommutative graphs.
}

\medskip

\subsection{Contents}
The structure of the manuscript is as follows. In Section~\ref{sec:prelim}, we recall the necessary background on operator systems, noncommutative graphs, and Morita-type equivalences, including $\Delta$-equivalence and TRO-equivalence. We also revisit $\De$-equivalence of graph operator systems in terms of the true-twin equivalence relation and clique blow-ups. In Section~\ref{sec:qgraphs}, we introduce our framework for quantum graphs, establish the basic properties of pullback and pushforward constructions. The main part of the section is devoted to the development of Morita type-equivalences for quantum graphs, culminating in Theorem~\ref{th:stronghom} and Theorem~\ref{T:qgraphs_balanced}, which characterises these equivalences in several ways. We also present a true-twin reduction construction of irreducibly acting quantum graphs. This construction plays a central role in the proof Theorem~\ref{th:stronghom} and it may be interesting in its own merit. We specialize to noncommutative graphs and obtain a concrete description in terms of completely positive maps. Finally, in Section~\ref{sec:nonc}, we study invariance results for various noncommutative graph parameters under Morita equivalence.

\section{Preliminaries} \label{sec:prelim}

\subsection{Operator systems and TROs} 
For an excellent introduction to the theory of operator spaces and operator systems, the reader is addressed to \cite{BL04, Pau02}.
In this work, all inner products are assumed to be linear on the second variable. For $n \in \bN$ we write $[n]$ for the set $\{1,\dots,n\}$. We write $M_{n}$ for the $n\times n$ matrices with complex entries and $D_n$ for the diagonals. Furthermore, we use $\eps_{ij}$ to denote the $(i, j)$-th matrix unit in $M_{n}$, for $i, j \in [n]$. We use $\perp$ to denote orthogonality with respect to the Hilbert--Schmidt inner product on $M_n$  defined by
\[
\sca{x, y} = \operatorname{Tr}(x^* y)  \foral x,y \in M_n.
\]

Let $H$ and $K$ be  Hilbert spaces.
For $\xi,\eta$ in $H$, the rank one projection $ \xi\eta^*\colon H \to H$ acts as $ \xi\eta^*(\ze)=\sca{\eta, \ze} \xi$. We write $\B(H, K)$ for the space of all bounded linear operators from $H$ into $K$, and set $\B(H) = \B(H, H)$. We write $I_{H}$ or just $I$ for the identity operator on $H$, when the context is clear. If $\X$ is a set of operators in $\B(H,K)$ we set 
\[
[\X H]:= \ol{\spn}\{x\xi: \; x \in \X, \xi \in H\}.
\]
For a set $\X$ of operators inside $\B(H)$ we will write $\X'$ for their commutant, i.e.,
\[
\X':=\{ a\in \B(H) : a b=ba \foral b \in \X\}.
\]

An \textit{operator space} $\X$ is a norm-closed subspace of $\B(H, K)$. An \textit{operator system} is an operator space $\S \subseteq \B(H)$ with $I_{H} \in \S$ and $\S^{*} = \S$.
An operator space $\M\subseteq \B(H,K)$ is called a \emph{TRO} if it satisfies 
\[
\M\M^*\M\subseteq \M.
\]
It follows directly from the definition that for a TRO $\M$ the norm-closed subspaces $[\M^*\M]\subseteq \B(H)$ and $[\M \M^*] \subseteq \B(K)$ are C*-algebras. We say that  an operator space $\X \subseteq \B(H,K)$ is  \emph{non-degenerate} (resp. \emph{acts non-degenerately}) if $I_H\in [\X^*\X] $ and $I_K\in [\X\X^*] $ (resp. $[\X H]=K$ and $ [\X^*K]=H$).

Let $\X, \Y$ be operator spaces. For a linear map $\phi \colon \X \to \Y$ and $n \in \bN$, the $n$-th \textit{amplification} $\phi^{(n)} \colon M_n(\X) \to M_n(\Y)$ is defined by
\[
\phi^{(n)}((x_{ij})_{i,j}) := (\phi(x_{ij}))_{i,j}.
\]
We say that $\phi$ is \textit{completely bounded} if
\[
\|\phi\|_{\mathrm{cb}} := \sup_{n \in \mathbb{N}} \|\phi^{(n)}\| < \infty.
\]
It is called \textit{completely contractive} if $\|\phi\|_{\mathrm{cb}} \leq 1$, and \textit{completely isometric} if $\phi^{(n)}$ is an isometry for all $n \in \mathbb{N}$.

Now let $\S, \T$ be operator systems. 
We say that $\phi \colon \S \to \T$ is \textit{unital} if $\phi(1_\S)=1_\T$, and \textit{positive} if $\phi(\S_+) \subseteq \T_+$. The map $\phi$ is called \textit{completely positive} if each $\phi^{(n)}$ is positive for all $n \in \mathbb{N}$. We say that $\phi$ is a \textit{complete order isomorphism} if it is a bijection whose inverse is completely positive, and a \textit{complete order embedding} if it is a complete order isomorphism onto its range.

For an operator system $\S$, a \emph{C*-envelope} of $\S$ is a pair $(\cenv(\S), \io)$ consisting of a C*-algebra $\cenv(\S)$ and a unital completely isometric map
$\io \colon \S \to \cenv(\S)$
such that $\ca(\io(\S)) = \cenv(\S)$ and the following universal property holds:
whenever $\vphi \colon \S \to \B(H)$ is a unital complete isometry, there exists a unique surjective $*$-homomorphism
\[
\pi \colon \ca(\vphi(\S)) \to \cenv(\S)
\text{ such that }
\pi \circ \vphi = \io.
\]
Such a pair exists, and it is unique up to $*$-isomorphism. The existence of the C*-envelope of an operator system was established by Hamana \cite{Ham79} through the existence of the injective envelope.
An independent proof for unital operator algebras was established by Dritschel and McCullough \cite{DM05} through the existence of maximal dilations. In particular, they show that the C*-envelope is the C*-algebra generated by a unital completely isometric maximal representation.
A simplified proof by Arveson \cite{Arv08} yields the same result for operator systems. 

An operator system $\S \subseteq \B(H)$ \emph{acts irreducibly} on $H$ if there is no non-trivial closed subspace $L \subseteq H$ 
reducing $\S$ such that the restriction map
\[
\S \to \B(L) ; s \mapsto s|_{L}
\]
is completely isometric. For an irreducibly acting operator system $\S\subseteq M_n$ there exist positive integers $n_{1},\dots,n_{k}$ 
with $n_{1}+\cdots+n_{k} = n$ such that
\[
\cenv(\S) \cong \ca(\S) \cong \bigoplus_{i=1}^{k} M_{n_{i}}.
\]
This is shown by Arveson in \cite{Arv11}. For a simplified proof see also \cite[Proposition 7.1]{EKT21}.

We recall the following facts and definitions from \cite{EKT21}. Let $\S \subseteq \B(H)$ and $\T \subseteq \B(K)$ be operator systems. 
We say that $\S$ and $\T$ are \emph{TRO-equivalent (via $\M$)}, and write 
\[
\S \sim_{\rm TRO} \T,
\]
if there exists a non-degenerate TRO $\M \subseteq \B(H,K)$ such that
\[
\M^* \T \M \subseteq \S
\qand
\M \S \M^* \subseteq \T.
\]

It is clear that in this case $\S $ is an $[\M^*\M]$-bimodule and $ \T$ is an $[\M\M^*]$-bimodule.
Our operator systems are assumed to be norm-closed and therefore as shown in \cite[Proposition 3.3]{EKT21} if $\S$ and $\T$ are TRO-equivalent via $\M \subseteq \B(H,K)$, then the non-degeneracy of $\M$ implies
\[
[\M^* \T \M] = \S
\qand
[\M \S \M^*] = \T.
\]

We say that two operator systems $\S$ and $\T$ are called \emph{$\De$-equivalent},
and write $\S \sim_{\De} \T$ if there exist Hilbert spaces $H$ and $K$ 
and unital complete order embeddings 
\[
\varphi \colon \S \to \B(H)
\qand
\psi\colon \T \to \B(K)
\]
such that 
\[
\varphi(\S) \sim_{\rm TRO} \psi(\T).
\]

For background on multiplier algebras of operator spaces, we refer the reader to \cite{BL04}.
A \emph{left multiplier} of an operator space $\X$ is a map $u \colon \X\to \X$ for which there exist a Hilbert space $H$, a complete isometry $i \colon \X\to \B(H)$ and an operator $c\in \B(H)$ such that
\[
i(u(b))=c  i(b) \foral b\in \X.
\]
The \textit{multiplier norm} of $u$ is defined as the $\inf{\|c\|}$ over all such triples $(i,c,H)$, and the set of all left multipliers of $\X$ is denoted by $\M_\ell(\X)$. Analogous definitions hold for right multipliers $\M_r(\X)$.

The \textit{left adjointable multiplier algebra} of $\X$ is 
\[
\A_\ell(\X):=\M_\ell(\X)\cap \M_\ell(\X)^*,
\]
and it consists of all $u \colon \X\to \X$ such that there exists a triple $(i,c,H)$ as before with 
\[
i(u(b))=ci(b) \foral b\in \X
\qand 
c^*i(\X)\subseteq i(\X).
\]
In case that $\S$ is an operator system, we have 
\[
\M_\ell(\S)\cong \{a\in \cenv(\S):a\S\subseteq \S\},
\]
and therefore
\begin{align*}
\A_\ell(\S) 
&\cong 
\{a\in \cenv(\S):a\S\subseteq\S,a^*\S\subseteq\S\}\\
&=
\{a,b\in \cenv(\S):a\S b\subseteq \S\}\\
&=
\{a\in \cenv(\S):\S a\subseteq\S,\S a^*\subseteq\S\}
\cong 
\A_r(\S).
\end{align*}
In particular, both $\A_\ell(\S)$ and $\A_r(\S)$ can be viewed as an operator subsystem of $\S$, denoted by $\A_\S$, and carrying the structure of a C*-subalgebra of $\cenv(\S)$.

In the sequel we will use this finite-dimensional modification of \cite[Proposition 3.4]{EKT21}. Although the proof is similar we include the details for the convenience of the reader. 
\begin{proposition} \label{prop:nondeg_TRO}
Let $\S \subseteq \B(H)$ and $\T \subseteq \B(K)$ be two operator systems acting on finite-dimensional Hilbert spaces. Suppose that  there exists a non-degenerately acting operator space $\X \subseteq \B(H,K)$ such that 
\[ 
\X \S \X^* \subseteq \T \qand \X^*\T \X \subseteq \S.
\]
Set $\A:= \ca(\X^*\X)$. Then $ \M:=[\X\A]$ is a non-degenerate TRO implementing $\S \sim_{\rm TRO} \T$.
\end{proposition}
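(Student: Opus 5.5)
Since everything is finite-dimensional, all spans are closed. Write $\A=\ca(\X^*\X)\subseteq\B(H)$ and $\B:=\ca(\X\X^*)\subseteq\B(K)$. The plan is in three steps:
\begin{enumerate}
\item check that $\S$ is an $\A$-bimodule and that $\M=[\X\A]$ is a TRO;
\item prove that $\M$ is non-degenerate;
\item verify the two inclusions $\M\S\M^*\subseteq\T$ and $\M^*\T\M\subseteq\S$.
\end{enumerate}

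\emph{Step 1.} Since $I_H\in\S$, we have $\X^*\X=\X^*I_K\X\subseteq \X^*\T\X$, provided $I_K\in\T$, which holds since $\T$ is an operator system. Hence $\X^*\X\subseteq\S$, and similarly $\X\X^*\subseteq\T$. Next,
\[
\X^*\X\S\subseteq \X^*\X\S\X^*\X\subseteq\S?
\]
This needs care. What we do have is $(\X^*\X)\S(\X^*\X)=\X^*(\X\S\X^*)\X\subseteq\X^*\T\X\subseteq\S$, and by induction $w\S w'\subseteq\S$ for all words $w,w'$ in $\X^*\X$ of positive length. One-sided invariance is not needed; we only need sandwiching.

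For the TRO property, use $\A=\spn\{(\X^*\X)^k:k\ge1\}$, which is a $*$-algebra since $\X^*\X$ is selfadjoint as a set. Then
\[
\M\M^*\M=\X\A\A\X^*\X\A\subseteq \X\A\A=\X\A=\M .
\]
One small point: $\A\A\subseteq\A$ is fine, but if $\A$ is non-unital then $\X\A\subseteq\X$ may fail, which is why $\M$ is defined as $[\X\A]$.

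\emph{Step 2 (the main obstacle).} We need $I_H\in[\M^*\M]$ and $I_K\in[\M\M^*]$. We have $[\M^*\M]=[\A\X^*\X\A]$, a two-sided ideal of $\A$ containing $\A\A\A$. Non-degenerate action of $\X$ gives $[\X^*K]=H$, so $\A$ acts non-degenerately on $H$. Indeed, $\X^*\X H\supseteq\X^*[\X H]=\X^*K$, whose span is $H$. A finite-dimensional C*-algebra acting non-degenerately contains $I_H$, being unital with unit the projection onto its essential space. Hence $I_H\in\A$ and $I_H\in\A\A\A\subseteq[\M^*\M]$, and consequently $\M=[\X\A]\supseteq\X$.

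Similarly $[\M\M^*]=[\X\A\X^*]\supseteq\X\X^*$, and $\X\A\X^*$ contains all $(\X\X^*)^k$, $k\ge1$, so $[\M\M^*]\supseteq\B$. Since $[\X H]=K$ and $\X\X^*K\supseteq\X[\X^*K]=\X H$, the algebra $\B$ acts non-degenerately on $K$, giving $I_K\in\B\subseteq[\M\M^*]$.

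\emph{Step 3.} Since $I_H\in\A$, we get $\A\S\A\subseteq\S$: each element of $\A$ is a linear combination of $I_H$ and words in $\X^*\X$, and the mixed terms $w\S$ and $\S w$ are handled by writing $I_H$ itself as a combination of such words, so that everything reduces to the sandwich case from Step 1. Then
\[
\M\S\M^*=\X\A\S\A\X^*\subseteq\X\S\X^*\subseteq\T,
\]
and
\[
\M^*\T\M=\A\X^*\T\X\A\subseteq\A\S\A\subseteq\S .
\]
Closures are harmless in finite dimensions. The delicate point throughout is producing the identities inside $\A$ and $\B$ from the non-degenerate action of $\X$; once that is in hand, the remaining verifications are routine.
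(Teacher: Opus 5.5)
\textbf{There is a genuine gap, and it sits at the same step as in the paper's own proof.}

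Your TRO verification and your Step~2 are correct and agree with the paper. Step~2 produces $I_H\in\A$ and $I_K\in\ca(\X\X^*)$ from the non-degenerate action of $\X$, and hence the non-degeneracy of $\M$.

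The gap is the bimodule property $\A\S\A\subseteq\S$. Your induction only proves $w\S w'\subseteq\S$ when $w$ and $w'$ have equal length. Peeling one factor of $\X^*\X$ off each side takes the pair of lengths $(k,l)$ to $(k-1,l-1)$. So for $k\neq l$ you end at a one-sided product $w\S$, which is exactly the one-sided invariance you said you could avoid. Writing $I_H=\sum_m c_m w_m$ as a combination of words does not help either, because inserting it on one side changes the length on that side only.

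In fact the claim is false. Let $H=K=\bC^2$, let $\S=\T=\spn\{\eps_{11},\eps_{22},\eps_{12}+\eps_{21}\}$, and let $\X=\bC x$ with $x=\eps_{11}+\sqrt{2}\,\eps_{22}$.
\begin{itemize}
\item $x$ is invertible, so $\X$ acts non-degenerately.
\item $x\S x^*=\S=x^*\T x$, because $x(\eps_{12}+\eps_{21})x^*=\sqrt{2}(\eps_{12}+\eps_{21})$.
\item With $h:=x^*x=\eps_{11}+2\eps_{22}$ we get $\A=\ca(h)=D_2$ and $\M=[\X\A]=D_2$.
\end{itemize}
However,
\[
h^2(\eps_{12}+\eps_{21})h=2\eps_{12}+4\eps_{21}\notin\S
\qand
\eps_{11}(\eps_{12}+\eps_{21})\eps_{22}=\eps_{12}\in\M^*\T\M\setminus\S .
\]
So $\S$ is not an $\A$-bimodule, and this $\M$ does not implement $\S\sim_{\rm TRO}\T$, even though $\S=\T$. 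The statement as written therefore cannot be proved by any argument.

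The paper's proof takes exactly your route and makes the identical leap. With $\P=[\X^*\X]$, it passes from ``$\P^n\S\P^n\subseteq\S$ for all $n$'' directly to ``$\A\S\A\subseteq\S$ since $\A=\ca(\P)$''. So this is a defect of the statement, not something you overlooked relative to the paper.

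The missing hypothesis is $I_H\in[\X^*\X]$, which is non-degeneracy of $\X$ in the sense defined in the preliminaries rather than mere non-degenerate action. Under it, $[\P^k]=[\P^k I_H]\subseteq[\P^{k+1}]$ for every $k$. Any two words can then be padded to a common length, $\A\S\A\subseteq\S$ follows from the balanced case, and your three steps go through unchanged. This hypothesis holds, for instance, whenever $\X$ contains the Kraus operators $v_i$ of a unital map, since then $\sum_i v_i^*v_i=I_H$.
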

\begin{proof}
Since $\X$ acts non-degenerately, we have $[\X H]=K$ and $[\X^* K]=H$.
Hence the C*-algebras $\ca(\X^*\X)$ and $\ca(\X\X^*)$ act non-degenerately,
and by finite-dimensionality they must be unital. We first note that
\[
\M\M^*\M \subseteq [\X\A\X^*\X\A]
\subseteq [\X\A]
=\M,
\]
so $\M$ is a TRO. 

We  show that $[\M^*\M] = \A$ and $[\M\M^*] = \ca(\X\X^*)$. Since $\A$ is unital, we have $\X \subseteq \M$,
and hence $[\X^*\X] \subseteq [\M^*\M]$.
As $[\M^*\M]$ is a C*-algebra, it follows that
\[
\A=\ca(\X^*\X)\subseteq [\M^*\M].
\]
Conversely, for $x,y\in\X$ and $a,b\in\A$ we have
\[
(xa)^*(yb)=a^*x^*yb \in \A,
\]
since $x^*y\in [\X^*\X]\subseteq \A$.
Thus $[\M^*\M]\subseteq \A$, and so $[\M^*\M]=\A$.
A similar argument gives $[\M\M^*]=\ca(\X\X^*)$.
In particular, $\M$ is non-degenerate.

Now we show that $\S$ is an $\A$-bimodule.  Set $\P:=[\X^*\X]$.
Since $I_K\in \T$ and $\X^*\T\X\subseteq \S$, we obtain
$\P\subseteq \S$.
Moreover,
\[
\P \S \P \subseteq [\X^*\X \S \X^*\X]
\subseteq [\X^*(\X\S\X^*)\X]
\subseteq [\X^*\T\X]
\subseteq \S.
\]
By induction, $\P^n\S \P^n\subseteq \S$ 
for all $n \geq 1$.
Since $\A=\ca(\P)$ we conclude that
\[
\A\S \A\subseteq \S,
\]
and by unitality of $\A$ we also have $\A \S\subseteq \S$ and $\S\A\subseteq \S$.
Finally, we have
\[
\M^*\T\M
\subseteq [\A\X^*\T\X\A]
\subseteq [\A\S\A]
\subseteq \S,
\]
and similarly
\[
\M\S\M^*
\subseteq [\X\A\S\A\X^*]
\subseteq [\X\S\X^*]
\subseteq \T.
\]
Thus $\M$ is a non-degenerate TRO implementing
$\S \sim_{\rm TRO} \T$, and the proof is complete.
\end{proof}

The next result follows from the arguments in the proof of \cite[Proposition 3.9]{EKT21}; we include a proof as it plays a central role in the sequel--- see also \cite[Theorem 5.10]{EK17}.

\begin{proposition}\label{P:multeq}
Let $\S\subseteq \B(H)$ and $\T\subseteq \B(K)$  be two operator systems acting irreducibly on finite-dimensional Hilbert spaces. If $\S \sim_{\rm TRO}\T$ then we can pick a non-degenerate TRO $\N\subseteq \B(H,K)$ implementing $\S \sim_{\rm TRO}\T$ such that $[\N^* \N]= \A_\S$ and $[\N\N^*] = \A_\T$.
\end{proposition}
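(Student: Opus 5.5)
Let $\M \subseteq \B(H,K)$ be a non-degenerate TRO implementing $\S \sim_{\rm TRO}\T$. Since $\S$ and $\T$ act irreducibly on finite-dimensional spaces, $\ca(\S)\cong\cenv(\S)$ and $\ca(\T)\cong\cenv(\T)$ canonically. We may therefore regard
\[
\A_\S=\{a\in\ca(\S): a\S\subseteq\S,\ a^*\S\subseteq\S\}
\]
as a unital C*-subalgebra of $\B(H)$, and likewise $\A_\T\subseteq\B(K)$. The plan is to enlarge $\M$ to
\[
\N:=[\A_\T\,\M\,\A_\S]\subseteq\B(H,K),
\]
show that $\N$ is a non-degenerate TRO implementing the equivalence, and then compute $[\N^*\N]$ and $[\N\N^*]$.

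\textbf{Step 1: $\N$ implements the equivalence.} Non-degeneracy is immediate, since $\M\subseteq\N$ because both algebras are unital. Next, $\N^*\T\N\subseteq[\A_\S\M^*\A_\T\T\A_\T\M\A_\S]$. As $\T$ is an $\A_\T$-bimodule, this lies in $[\A_\S\M^*\T\M\A_\S]\subseteq[\A_\S\S\A_\S]=\S$. The inclusion $\N\S\N^*\subseteq\T$ follows symmetrically.

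\textbf{Step 2: $[\M^*\M]\subseteq\A_\S$.} By \cite[Proposition 3.3]{EKT21}, $\S$ is an $[\M^*\M]$-bimodule, and $[\M^*\M]$ is selfadjoint. It remains to place $[\M^*\M]$ inside $\ca(\S)$.
\begin{itemize}
\item The identity $\S=[\M^*\T\M]$ gives $[\M^*\M]\subseteq\S\subseteq\ca(\S)$, because $I\in\T$.
\item Together with $[\M^*\M]\S\subseteq\S$, this yields $[\M^*\M]\subseteq\A_\S$.
\end{itemize}
By symmetry $[\M\M^*]\subseteq\A_\T$.

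\textbf{Step 3 (the main obstacle): $\A_\T\M\subseteq[\M\A_\S]$ and $\M\A_\S\subseteq[\A_\T\M]$.} Given $a\in\A_\T$ and $m,n\in\M$, the plan is to show that $n^*am\in\A_\S$.
\begin{itemize}
\item Since $I\in\T$, we have $n^*am\in\M^*\T\M\subseteq\S\subseteq\ca(\S)$.
\item For the multiplier condition, $n^*am\,\S\subseteq n^*a\,\M\S\M^*\M\subseteq n^*a\T\M\subseteq n^*\T\M\subseteq\S$. Here we use $\M\S\M^*\subseteq\T$ together with $\S=[\M^*\T\M]$, and we expand elements of $\S$ via $\S\subseteq[\M^*\T\M]$ so that $m\,\M^*\T\M\subseteq\M\M^*\T\M$. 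The bookkeeping of these brackets is routine.
\item The adjoint condition is handled the same way.
\end{itemize}
Then, since $I_K\in[\M\M^*]$, we get $am\in[\M\M^*am]\subseteq[\M\,\M^*\A_\T\M]\subseteq[\M\A_\S]$. Consequently $\N=[\M\A_\S]=[\A_\T\M]$, so $\N$ is a TRO: $\N\N^*\N\subseteq[\M\A_\S\M^*\M\A_\S]\subseteq[\M\A_\S]$, using Step 2.

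\textbf{Step 4: computing the algebras.} We have $[\N^*\N]=[\A_\S\M^*\M\A_\S]\subseteq\A_\S$. Conversely, $\A_\S=[\A_\S\M^*\M\A_\S]\subseteq[\N^*\N]$, since $I_H\in[\M^*\M]$ and $\A_\S$ is unital. Hence $[\N^*\N]=\A_\S$, and symmetrically $[\N\N^*]=\A_\T$.

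The delicate point is Step 3, namely showing that $\M^*\A_\T\M\subseteq\A_\S$. It is exactly here that irreducibility is needed: it identifies $\A_\S$ concretely inside $\ca(\S)\subseteq\B(H)$, so that membership in $\A_\S$ is tested by a concrete multiplier condition on the given Hilbert space.
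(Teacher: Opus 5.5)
Your proposal is correct and follows essentially the same route as the paper. You enlarge $\M$ to $\N=[\A_\T\M\A_\S]$, use irreducibility to test membership in the multiplier algebra concretely on the Hilbert space, and prove the key inclusions $[\M^*\M]\subseteq\A_\S$ and $\M^*\A_\T\M\subseteq\A_\S$ (the paper checks the symmetric inclusion $[\M\A_\S\M^*]\subseteq\A_\T$ by expanding $t\in\T$ rather than $s\in\S$). Your additional observation that $\N=[\M\A_\S]=[\A_\T\M]$ makes the TRO property of $\N$ explicit, whereas the paper leaves it implicit in $[\N\N^*]=\A_\T$.
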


\begin{proof}
Let $\M\subseteq \B(H,K)$ be a non-degenerate TRO such that
\[
[\M^* \T \M] = \S
\qand
[\M \S \M^*] = \T.
\]
Note that by the TRO-property $\S$ and $\T$ are bimodules over $[\M^*\M]$ and $[\M \M^*]$, respectively. As both $\S$ and $\T$ are irreducibly acting we have 
\[
\A_\S \subseteq \ca(\S) \subseteq \B(H) \qand \A_\T \subseteq \ca(\T) \subseteq \B(K).
\]
Hence by definition of the multiplier algebra we obtain
\[
[\M^*\M]\subseteq \A_\S \qand [\M \M^*] \subseteq \A_\T.
\]
Fix $t\in \T$. Then $t= \sum_i c_i' s_i c_i^*$ for $c_i,c_i'\in \M$ and $s_i \in \S$. For any $d , d' \in \M$ and $a\in \A_\S$ we have
\[
d' a d^* t=  \sum_i d' a d^*   c_i' s_i c_i^*\in [\M \A_\S \M^* \M \S \M^*] \subseteq \T,
\]
and thus $[\M \A_\S \M^*] \subseteq \A_\T$ by the definition of $\A_\T$. Similarly, we have $[\M^* \A_\T \M] \subseteq \A_\S$. In particular, we have equalities since 
\[
\A_\S \subseteq [\M^* \M \A_\S \M^* \M] \subseteq [\M^* \A_\T \M] \subseteq \A_\S \qand \A_\T \subseteq  [\M \M^* \A_\T \M \M^*]  \subseteq \A_\T.
\]
Set $\N:=[\A_\T \M \A_\S]$. Then 
\[
[\N \S \N^*]=[\A_\T \M \A_\S \S \A_\S \M^* \A_\T]= [\A_\T \M \S \M^* \A_\T]= [\A_\T \T \A_\T]=\T,
\]
and similarly $[\N^* \T \N] = \S$. Moreover,
\[
[\N \N^*] = [\A_\T \M \A_\S \A_\S \M^* \A_\T] =\A_\T \qand [\N^* \N]=[\A_\S \M^* \A_\T \A_\T \M \A_\S]=\A_\S.
\]
Thus $\N$ is the required non-degenerate TRO implementing a TRO-equivalence between $\S$ and $\T$, and the proof is complete.
\end{proof}

We will also need the following lemma.

\begin{lemma} \label{L:unitary}
Let $\M\subseteq \B(\bC^\alpha\otimes\bC^n,\bC^\beta\otimes \bC^n)$ be a TRO such that 
\[
[\M^*\M]=M_\alpha\otimes I_n \qand [\M\M^*]=M_\beta\otimes I_n.
\]
Then we have  $\M=M_{\beta,\alpha}\otimes u$ for some unitary $u\in M_n$.
\end{lemma}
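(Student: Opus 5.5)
We will use the commutant structure. Since $\M\M^*\M\subseteq \M$ and $[\M^*\M]=M_\alpha\otimes I_n$, $[\M\M^*]=M_\beta\otimes I_n$, the space $\M$ is a $(M_\beta\otimes I_n)$-$(M_\alpha\otimes I_n)$-bimodule. Our plan is to determine all such bimodules of operators $\bC^\alpha\otimes\bC^n\to\bC^\beta\otimes\bC^n$, and then use the TRO conditions to pin down a single unitary.

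\emph{Step 1: bimodule structure.} For $x\in\M$, write $x=\sum_{k,l}\eps_{kl}\otimes x_{kl}$ with $x_{kl}\in M_n$, where $\eps_{kl}\in M_{\beta,\alpha}$. Multiplying on the left by $\eps_{pk}\otimes I_n$ and on the right by $\eps_{lq}\otimes I_n$ shows that $\eps_{pq}\otimes x_{kl}\in\M$ for all $p,q$. Hence
\[
\M=M_{\beta,\alpha}\otimes V, \qquad V:=\{y\in M_n : \eps_{11}\otimes y\in \M\},
\]
and $V$ is a subspace of $M_n$.

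\emph{Step 2: properties of $V$.} The TRO property $\M\M^*\M\subseteq\M$ gives $VV^*V\subseteq V$, so $V$ is a TRO in $M_n$. Next we compute
\[
[\M^*\M]=M_\alpha\otimes [V^*V] \qquad\text{and}\qquad [\M\M^*]=M_\beta\otimes[VV^*],
\]
using $M_{\beta,\alpha}^*M_{\beta,\alpha}=M_\alpha$. Comparing with the hypotheses yields $[V^*V]=\bC I_n=[VV^*]$. Concretely, $V^*V$ spans $\bC I$, so $y^*z\in\bC I$ for all $y,z\in V$.

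\emph{Step 3: reduction to a single unitary.} Pick a nonzero $u\in V$. Then $u^*u=\lambda I$ with $\lambda>0$, and after rescaling we may assume $u$ is unitary. For any $y\in V$ we have $u^*y=cI$ for some scalar $c$, so $y=cu$. Therefore $V=\bC u$, and $\M=M_{\beta,\alpha}\otimes u$.

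The main obstacle is Step 1, specifically justifying that the entries $x_{kl}$ themselves lie in $V$. This follows because $\M$ is norm-closed and, in finite dimensions, a subspace, so it is closed under the module actions by $[\M\M^*]$ and $[\M^*\M]$. That closure holds since $[\M\M^*]\M\subseteq\M$, which is obtained from $\M\M^*\M\subseteq\M$ by linearity and closure. The remaining steps are routine.
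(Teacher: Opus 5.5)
Your proof is correct, but it takes a different route from the paper.

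\textbf{The paper's route.} The paper invokes \cite[Theorem 3.2]{Ele12} to write $\M$ as the intertwiner space
$\{x : xa'=\pi(a')x \text{ for all } a'\in (M_\alpha\otimes I_n)'\}$
of a $*$-isomorphism $\pi\colon I_\alpha\otimes M_n\to I_\beta\otimes M_n$ between the commutants. It then uses that the induced automorphism of $M_n$ is inner, say $\ad u$. From the entrywise relation $x_{ij}a=uau^*x_{ij}$ it reads off that $u^*x_{ij}\in M_n'=\bC I_n$. The reverse inclusion $M_{\beta,\alpha}\otimes u\subseteq\M$ comes for free from the intertwiner description.

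\textbf{Your route.} You use only that $\M$ is closed under the module actions of $[\M\M^*]$ and $[\M^*\M]$. With matrix units this splits $\M=M_{\beta,\alpha}\otimes V$. Then $V^*V\subseteq\bC I_n$, together with $V\neq 0$, forces $V=\bC u$ for a unitary $u$. Note that $V\neq 0$ is immediate from $[V^*V]=\bC I_n$.

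\textbf{Comparison.} Your argument is elementary and self-contained: it needs neither the structure theory of TROs nor innerness of automorphisms of $M_n$. It also shows that part of what you derive is unnecessary once the tensor decomposition is in hand. Neither $VV^*V\subseteq V$ nor $[VV^*]=\bC I_n$ is used in Step 3, although both hypotheses on $[\M^*\M]$ and $[\M\M^*]$ are still needed in Step 1 for the two-sided module structure.

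The paper's route is shorter given the citation. It also ties $u$ to the unitary implementing the isomorphism of commutants. That is the same tool from \cite{Ele12} the paper uses in the proof of Theorem~\ref{th:stronghom} to identify $\A_\S'\cong\A_\T'$ and match up the minimal central projections before applying the lemma blockwise.
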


\begin{proof}
Set $\A:=[\M^*\M]$ and $\B:=[\M\M^*]$. By \cite[Theorem 3.2]{Ele12}, there exists a $*$-isomorphism $\pi \colon \A'\to\B'$ such that
\begin{equation} \label{eq:tro}
\M=\{x\in M_{\beta,\alpha}\otimes M_n: xa'=\pi(a')x \foral a'\in \A'\}.
\end{equation}
Observe that $\A'=I_\alpha\otimes M_n$ and $\B'=I_\beta\otimes M_n$ and thus $\pi$ induces a $*$-automorphism of $M_n$. In particular, we have
\[
\pi(I_\alpha\otimes a)=I_\beta\otimes (uau^*) \foral a\in M_n,
\]
for some unitary $u\in M_n$.

We will prove that $\M= M_{\be, \al}\otimes u$. Note that the inclusion $M_{\beta,\alpha}\otimes u\subseteq \M$ is immediate. Hence is remains to show the reverse inclusion. 
To that end, fix $x=\sum_{i,j}\eps_{ij}\otimes x_{ij}\in \M$. Then \eqref{eq:tro} implies that
\[
x(I_\alpha\otimes a)= \pi(I_\alpha\otimes a) x= (I_\beta\otimes uau^*)x \foral a\in M_n.
\]
By expanding the above equality we obtain
\[
\sum_{i,j}\eps_{ij}\otimes (x_{ij} a)=\sum_{i,j}\eps_{ij}\otimes (uau^* x_{ij}) \foral a \in M_n,
\]
which implies that
\[
x_{ij}a=u a u^* x_{ij} \foral i,j \text{ and } a\in M_n.
\]
We thus obtain that $u^*x_{ij}\in M_n'=\bC I_n$ for all $i,j$. We conclude that $x\in M_{\beta,\alpha}\otimes u$ and the proof is complete.
\end{proof}

\subsection{Noncommutative graphs and graph operator systems}
Let $H$ and $K$ be finite-dimensional Hilbert spaces. A \emph{quantum channel} is a map $\Phi\colon \B(H)\rightarrow \B(K)$ which is completely positive and trace-preserving. By \cite[Theorem 1]{Cho75} we have that for a quantum channel $\Phi$ there exist operators $v_{i}\colon H\rightarrow K$ for $i = 1, \dots, r$ such that 
\begin{gather}\label{eqn_kraus_decomp}
\Phi(c) = \sum\limits_{i=1}^{r}v_{i}cv_{i}^*, \foral c \in \B(H) \qand \sum_{i=1}^{r}v_{i}^*v_{i} = I_{H}.
\end{gather}
The equation (\ref{eqn_kraus_decomp}) is called a \textit{Kraus representation of $\Phi$}, and the operators $(v_{i})_{i=1}^r$  are the \textit{Kraus operators} of the representation. We note that the second equality in (\ref{eqn_kraus_decomp}) is equivalent with the trace-preservation of the map $\Phi$. The subspace
\[
\S_{\Phi} := \spn\{v_{i}^*v_{j}: \; i, j \in [r]\}
\]
is called the \textit{noncommutative graph of} $\Phi$; by trace-preservation, this is an operator system in $\B(H)$.
We also set
\[
\X_{\Phi} := \spn\{v_{i}: \; i \in [r]\} \subseteq \B(H, K)
\]
as the \textit{Kraus space for} $\Phi$. It can be shown (see \cite[Corollary 2.23]{Wat18}) that both $\S_{\Phi}$ and $\X_{\Phi}$ are  independent of the choice of the Kraus representation for $\Phi$.
In fact, by similar arguments, for a general (not necessarily completely positive) trace-preserving linear map $\Phi\colon \B(H)\rightarrow \B(K)$ there exist operators $v_{i}, w_{i}: H\rightarrow K$ for $i = 1, \hdots, r$ such that
\[
\Phi(c) = \sum\limits_{i=1}^{r}v_{i}cw_{i}^{*} \foral c \in \B(H) \text{ and } \sum\limits_{i=1}^{r}w_{i}^{*}v_{i} = I_{H}.
\]
For such a linear map, we write
\[
\wt{\S}_{\Phi} := \spn\{w_{i}^{*}v_{j}: \; i, j \in [r]\}.
\]
The distinction between $\S_{\Phi}$ and $\wt{\S}_{\Phi}$ will be made clear in context. 
As in the discussion directly preceding \cite[Lemma 4.1]{BTW21}, we set
\[
\fC(\S) := \{\Phi\colon \B(H)\rightarrow M_{k}:\text{ where } \Phi \text{ is a quantum channel with } \S_{\Phi} \subseteq \S \text{ and } k \in \bN\}. 
\]
Finally, for a quantum channel $\Phi$ as above we let $\Phi^{*}: \mathcal{B}(K)\rightarrow \mathcal{B}(H)$ denote the dual of $\Phi$; this is a unital, completely positive map which in finite-dimensions has Kraus representation
\[
\Phi^{*}(d) = \sum\limits_{i=1}^{r}v_{i}^{*}dv_{i}, \foral d \in \mathcal{B}(K),
\]
when $\Phi$ has a Kraus representation as in (\ref{eqn_kraus_decomp}).

We now give some connections with classical graph theory. Let $\G=(V(\G), E(\G))$ be a graph. We say a graph $\G$ is \textit{simple} if there are no loops at any vertices in $\G$. For $x, y \in V(\G)$, we write $ x \simeq_{\G} y$ whenever $ x \sim_{\G} y$ or $ x=y$. Throughout this paper, all graphs are finite, simple, and undirected. When convenient, we implicitly identify the vertex set $V(\G)$ with $[n]$ where $|V(\G)|=n$.
The \emph{graph operator system} of $\G$ is defined as
\[
\S_{\G} := \spn \{\eps_{xy} : x \simeq_{\G} y\}\subseteq M_n.
\]
For a graph $\G$ and a vertex $x\in V(\G)$ we set
\[
\N(x):=\{ y\in V(\G) : x\simeq_{\G} y\},
\]
and call this set the \emph{closed neighborhood} of $x$.
Two vertices $x, y \in V(\G)$ are called  \emph{true twins} if $\N(x)=\N(y)$.
We write $x\approx y$ for two vertices that are true twins, and it follows that $\approx$ defines an equivalence relation on $V(\G)$. We denote by $ [x]$ the true-twin equivalence class of $x$.  The \textit{skeleton}   of $\G$ is the graph $\G^\circ$ obtained by replacing the vertices of $\G$ with their true-twin  equivalence classes and where $[x]$ is adjacent to $[y]$ in $\G^\circ$ whenever $x$ is adjacent to $y$ in $\G$.

Let $\G$ be a graph on $n$ vertices. In \cite[Theorem 3.2]{OP14} it is proven that $\cenv(\S_\G)$ is the C*-algebra generated by $\S_\G$ inside $M_n$. In particular, the multiplier algebra $\A_{\S_\G}$ can be realised inside $M_n$ as well.

\begin{proposition}\label{P:twin}
Let $\G$ be a graph on $n$ vertices. Then 
\begin{equation} \label{eq:multdecomp}
\A_{\S_{\G}}=\bigoplus_{j=1}^k M_{n_j} \text{ for } n_j, k\in \bN \text{ and } n_1+\cdots +n_k=n,
\end{equation}
where each $M_{n_j}$ corresponds to the true-twin equivalence class $C_j$ of size $ n_j$ for all $ j =1,\dots,k $.

Moreover, any irreducibly acting operator system $\S \subseteq M_n$ whose multiplier algebra admits a decomposition as in \eqref{eq:multdecomp} is necessarily a graph operator system.

\end{proposition}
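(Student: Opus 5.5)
The plan is to compute the multiplier algebra concretely inside $M_n$ and then read off which matrix units it contains. By \cite[Theorem 3.2]{OP14}, $\cenv(\S_\G)=\ca(\S_\G)\subseteq M_n$. Hence
\[
\A_{\S_\G}=\{a\in \ca(\S_\G): a\S_\G\subseteq \S_\G,\ a^*\S_\G\subseteq \S_\G\}.
\]
Since $I\in\S_\G$, any $a\in M_n$ with $a\S_\G\subseteq \S_\G$ already satisfies $a=aI\in\S_\G\subseteq \ca(\S_\G)$. So I will work with the description
\[
\A_{\S_\G}=\{a\in M_n: a\S_\G\subseteq\S_\G,\ a^*\S_\G\subseteq \S_\G\}.
\]

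The first step is to note that $\S_\G$ is a $D_n$-bimodule: it is spanned by matrix units, and $\eps_{xx}\eps_{yz}\eps_{ww}$ is either $0$ or $\eps_{yz}$. It follows that $D_n\subseteq \A_{\S_\G}$. Because $\A_{\S_\G}$ is an algebra containing $D_n$, it is itself a $D_n$-bimodule. It is therefore the span of the matrix units it contains, since $\eps_{xx}a\eps_{yy}=a_{xy}\eps_{xy}$.

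The second step determines which matrix units lie in $\A_{\S_\G}$. For $\eps_{xy}$ the conditions are $\eps_{xy}\S_\G\subseteq\S_\G$ and $\eps_{yx}\S_\G\subseteq\S_\G$. Since $\eps_{xy}\eps_{wz}=\delta_{yw}\eps_{xz}$, the first condition says: $y\simeq_\G z$ implies $x\simeq_\G z$, that is, $\N(y)\subseteq\N(x)$. The second gives $\N(x)\subseteq\N(y)$. Thus $\eps_{xy}\in\A_{\S_\G}$ if and only if $x\approx y$. Consequently
\[
\A_{\S_\G}=\spn\{\eps_{xy}: x\approx y\}.
\]
After ordering $V(\G)$ so that each true-twin class $C_j$ is a consecutive block, this is exactly $\bigoplus_j M_{n_j}$ with $n_j=|C_j|$, which proves \eqref{eq:multdecomp}.

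For the converse, let $\S\subseteq M_n$ be irreducibly acting. Irreducibility ensures, by Arveson's result recalled above, that $\cenv(\S)\cong\ca(\S)\subseteq M_n$, so $\A_\S$ is a concrete subalgebra of $M_n$. The point I need to pin down is what ``admits a decomposition as in \eqref{eq:multdecomp}'' means. I will take it to mean that, with respect to the standard basis (possibly after a permutation, or a unitary change of basis which we absorb), $\A_\S$ is the block-diagonal algebra $\bigoplus_j M_{n_j}$. In particular $D_n\subseteq \A_\S$, so $\S$ is a $D_n$-bimodule, and by \cite[Proposition 1.1]{PPS89} we get $\S=\spn\{\eps_{xy}:(x,y)\in\kappa\}$ for some relation $\kappa\subseteq[n]\times[n]$.

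Finally, $\kappa$ is a graph relation. Since $I\in\S$, the relation $\kappa$ is reflexive. Since $\S=\S^*$, it is symmetric. Defining $\G$ by $x\sim_\G y$ iff $x\neq y$ and $(x,y)\in\kappa$ gives $\S=\S_\G$. The first part then shows that the blocks of $\A_\S$ are exactly the true-twin classes of this $\G$.

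The main obstacle is this interpretive point in the converse, namely ensuring that the given decomposition sits compatibly with a diagonal masa so that $D_n\subseteq\A_\S$. Once that is granted, everything reduces to the matrix-unit bookkeeping above.
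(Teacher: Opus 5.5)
Your proof is correct and follows essentially the same route as the paper. You characterise $\eps_{xy}\in\A_{\S_\G}$ by $x\approx y$ using $\eps_{xy}\eps_{wz}=\delta_{yw}\eps_{xz}$ and assemble the blocks over the true-twin classes; for the converse you use $D_n\subseteq\A_\S$ (under the same ``block-diagonal in the standard basis'' reading the paper adopts) together with \cite[Proposition 1.1]{PPS89}. You are slightly more explicit than the paper in justifying that $\A_{\S_\G}$, being a $D_n$-bimodule, is spanned by the matrix units it contains, which the paper uses tacitly when computing $p_j\A_{\S_\G}p_\ell$.
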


\begin{proof}
We begin by proving that
\[
\varepsilon_{xy} \in \A_{\S_\G}
\quad \Longleftrightarrow \quad
x \approx y.
\]
Fix $x, y\in V(\G)$ such that $x \approx y$ and let $\eps_{wz} \in \S_{\G}$. By definition of $\S_{\G}$, we have $z \in \N(w)$.
To show that $\eps_{xy} \in \A_{\S_\G}$, it suffices to verify that
$\eps_{xy}\eps_{wz}$ and $\eps_{wz}\eps_{xy}$ are both in $\S_{\G}$.
We prove only that $\eps_{xy}\eps_{wz} \in \S_{\G}$, since the argument to show that $\eps_{wz}\eps_{xy} \in \S_{\G}$ is analogous.
Indeed, it follows that
\[
\eps_{xy}\eps_{wz}
=
\begin{cases}
\eps_{xz}, & \text{if } y = w, \\
0, & \text{if } y \neq w,
\end{cases}
\]
so it suffices to consider the case $y = w$.
In this case, we obtain $z \in \N(w)\equiv \N(y) = \N(x)$, and hence
$\eps_{xz} \in \S_{\G}$, as required.

Conversely, suppose that $\eps_{xy} \in \A_{\S_{\G}}$ for some $x, y\in V(\G)$,
and let $w \in \N(y)$.
Then $\eps_{yw} \in \S_{\G}$, and therefore
\[
\eps_{xw} = \eps_{xy}\eps_{yw} \in \S_{\G}.
\]
This implies that $w \in \N(x)$, showing that $\N(y) \subseteq \N(x)$.
By symmetry, we conclude that $\N(x) = \N(y)$, and thus $x \approx y$. 

Let $C_1,\dots,C_k$ be the true-twin equivalence classes of $\G$, and set
\[
p_j := \sum_{x\in C_j} \varepsilon_{xx} \foral j=1,\dots,k.
\]
By definition of $\A_{\S_\G}$ we have $\eps_{xx} \in \A_{\S_\G}$ for every $x\in V(\G)$. In particular, we have that each $p_j \in \A_{\S_\G}$, and moreover these projections are mutually orthogonal and satisfy $\sum_{j=1}^k p_j = I$.
From the previous step, we have
$\varepsilon_{xy} \in \A_{\S_\G}$ if and only if $x\approx y$,
so
\[
p_j \A_{\S_\G} p_j
=
\mathrm{span}\{\varepsilon_{xy} : x, y \in C_j\}.
\]
The family $\{\varepsilon_{xy} : x,y \in C_j\}$ forms a full system of
matrix units on $\ell^2(C_j)$, hence
\[
p_j \A_{\S_\G} p_j \cong M_{n_j},
\qquad n_j = |C_j|.
\]
If $j \neq \ell$, then $x \not\approx y$ for $x \in C_j$, $y \in C_\ell$,
so $p_j \A_{\S_\G} p_\ell = \{0\}$. Therefore,
\[
\A_{\S_\G}
=
\bigoplus_{j=1}^k p_j \A_{\S_\G} p_j
\cong
\bigoplus_{j=1}^k M_{n_j},
\]
as required.

Now let  $\S\subseteq M_n$  be an irreducibly acting operator system and   assume $\A_{\S} = \oplus_{j=1}^k M_{n_j}$ is a block-diagonal algebra in the standard basis.
Since $n_1+\dots+ n_k=n$ we have that $\A_\S$ contains all diagonal matrix units $\eps_{ii}$ and thus $D_n \subseteq \A_{\S}$. By definition of the multiplier algebra $\S$ is a $D_n$-bimodule. By \cite[Proposition 1.1]{PPS89}  it follows that $\S=\S_{\G}$ for some graph $\G$ on $n$ vertices. 
\end{proof}

While the following result is standard, we include a proof for completeness.

\begin{proposition}\label{P:irrgra}
Let $\G$ be a graph on $n$ vertices. The graph operator system $\S_\G\subseteq M_n$ is irreducibly acting. 
\end{proposition}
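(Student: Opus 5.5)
To prove that $\S_\G \subseteq M_n$ acts irreducibly, we have to show two things: $\S_\G$ has no non-trivial reducing subspace $L\subseteq \bC^n$ at all, or if it has one, compression to $L$ fails to be completely isometric. The plan is to use that $\S_\G$ contains the diagonal algebra $D_n$, because $\eps_{xx}\in\S_\G$ for every $x$ (each vertex satisfies $x\simeq_\G x$).

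First step: any subspace $L$ reducing $\S_\G$ reduces $D_n$. Hence its orthogonal projection $P_L$ commutes with $D_n$, so $P_L\in D_n'=D_n$. Therefore $L=\spn\{e_x : x\in F\}$ for some subset $F\subseteq V(\G)$. If $F\neq V(\G)$, pick a vertex $z\notin F$.

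Second step: show the restriction $s\mapsto s|_L$ is not even isometric, which is enough to rule out complete isometry. Take $s=\eps_{zz}\in\S_\G$. Then $\|s\|=1$, while $s|_L=0$ because $\eps_{zz}e_x=0$ for all $x\in F$. So the restriction map is not injective, and in particular not completely isometric.

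It follows that the only subspace $L$ reducing $\S_\G$ with completely isometric restriction is $L=\bC^n$. This is exactly irreducible action in the sense defined above.

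The argument is short. The only point needing care is the identification of reducing subspaces of $D_n$ with coordinate subspaces. This is immediate from $D_n'=D_n$ and the fact that the projections in $D_n$ are the diagonal $0$–$1$ matrices.

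Alternatively, one could cite \cite[Theorem 3.2]{OP14}: $\cenv(\S_\G)=\ca(\S_\G)\subseteq M_n$, which is consistent with irreducibility. The direct argument above avoids needing that result.
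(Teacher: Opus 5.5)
Your proof is correct and follows essentially the same argument as the paper: you identify the reducing subspace $L$ as a coordinate (block) subspace, then observe that a diagonal matrix unit $\eps_{zz}\in\S_\G$ with $z$ outside $L$ restricts to zero on $L$. The only difference is how the form of $L$ is obtained: the paper reads it off from the block decomposition $\ca(\S_\G)=\bigoplus_i M_{n_i}$ quoted from \cite{OP14}, whereas you derive it directly from $D_n\subseteq\S_\G$ and $D_n'=D_n$, which is slightly more self-contained.
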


\begin{proof}
There exist $k\in \bN$ and $n_1,\dots, n_k$ such that $n_1+\dots + n_k=n$ and
\[
\ca(\S_\G)= \bigoplus_{i=1}^k M_{n_i},
\]
see for example the proof of \cite[Theorem 3.2]{OP14}. Suppose that $L\subseteq H$ is reducing for $\S_\G$ (and hence also for $\ca(\S_{\G})$), and the map
\[
s\in \S_\G \mapsto s|_L\in \B(L)
\]
is completely isometric.
In particular, we obtain that
\[
L=\bigoplus_{j\in F} \bC^{n_j} \text{ for some } F\subseteq [k].
\]
Assuming that $L$ is non-trivial, we may pick $i\in [k]$ such that $i\not \in F$. We then get 
\[
\eps_{ii}\in \S_\G\qand \eps_{ii}|_L=0,
\]
thus deriving a contradiction.
\end{proof}

Let $\G = (V(\G), E(\G)) $ and $\H = (V(\H), E(\H)) $ be graphs.  A \emph{graph homomorphism} from  $\G$  to  $\H$  is a map $f \colon V(\G) \to V(\H)$
satisfying
\[
x \sim_{\G} y \quad \Rightarrow\quad f(x) \sim_{\H} f(y) \foral x,y \in V(\G).
\]
If moreover $f$ is bijective and $f^{-1}$ is a graph homomorphism $f$ will be called \emph{graph isomorphism} and we will say that the graphs $\G$ and $\H$ are \emph{isomorphic}.
 
A \emph{pullback} is a map $f\colon V(\G) \to V(\H)$ satisfying
\[
x\simeq_{\G} y \quad \Leftrightarrow \quad f(x) \simeq_{\H} f(y)
 \foral x,y \in V(\G).
\]
If moreover $f$ is surjective we say that $ f$ is a \emph{full} pullback. In this case we say that $\G$ is a (full) pullback of $\H$.

In \cite[Corollary 7.6]{EKT21} it is shown that for graphs $\G$ and $\H$, the statements $\S_{\G} \sim_{\De} \S_{\H}$, $\S_{\G} \sim_{\rm TRO} \S_{\H}$, and $\G$ and $\H$ being full pullbacks of isomorphic graphs are all equivalent. In the following proposition we prove that these statements are also equivalent to $\G^\circ$ and $\H^\circ$ being isomorphic. 

\begin{proposition}\label{P:Redgr}
Let $\G$ and $\H$ be graphs. The following are equivalent:
\begin{enumerate}
\item $\G^\circ$ and $\H^\circ$ are isomorphic.
\item $\G$ and $\H$ are full pullbacks of isomorphic graphs.
\end{enumerate}
\end{proposition}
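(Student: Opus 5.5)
For (i)$\Rightarrow$(ii), I will show that every graph $\G$ is a full pullback of its skeleton $\G^\circ$, via the quotient map $q_\G\colon V(\G)\to V(\G^\circ)$, $x\mapsto [x]$. Surjectivity is clear. For the pullback property I first check that adjacency in $\G^\circ$ is well defined and compatible with the closed neighbourhoods. If $x\approx x'$ and $y\approx y'$, then $y\in\N(x)=\N(x')$ gives $x'\simeq_\G y$, hence $y\in \N(x')$ and so $x'\in\N(y)=\N(y')$. It follows that $x\simeq_\G y$ if and only if $[x]\simeq_{\G^\circ}[y]$. Here I must handle equality: if $[x]=[y]$ then $y\in \N(y)=\N(x)$, so $x\simeq_\G y$. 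Conversely, if $[x]=[y]$ and $x\neq y$ then $x\sim_\G y$, which is consistent with no loops appearing in $\G^\circ$. If $\G^\circ\cong\H^\circ$, then $\G$ and $\H$ are full pullbacks of $\G^\circ$ and of $\H^\circ\cong\G^\circ$ respectively, which gives (ii).

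For (ii)$\Rightarrow$(i), I may compose with an isomorphism and assume that $f\colon V(\G)\to V(\K)$ and $g\colon V(\H)\to V(\K)$ are full pullbacks onto the same graph $\K$. It then suffices to prove the key lemma that \emph{if $f\colon\G\to\K$ is a full pullback then $\G^\circ\cong\K^\circ$}. Indeed, this gives $\G^\circ\cong\K^\circ\cong\H^\circ$.

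To prove the lemma, I define $\Phi\colon V(\G^\circ)\to V(\K^\circ)$ by $[x]\mapsto[f(x)]$. The main step is to show that $x\approx y$ if and only if $f(x)\approx f(y)$. Using the pullback property, $\N_\G(x)=f^{-1}(\N_\K(f(x)))$. Since $f$ is surjective, $f(f^{-1}(A))=A$, so $\N_\K(f(x))=f(\N_\G(x))$. Hence $\N_\G(x)=\N_\G(y)$ if and only if $\N_\K(f(x))=\N_\K(f(y))$: the forward direction is obtained by applying $f$, and the backward direction by taking preimages. This shows that $\Phi$ is well defined and injective. It is surjective because $f$ is. Adjacency is preserved in both directions, since for distinct classes $[x]\neq[y]$ we have $[x]\sim[y]$ if and only if $x\sim_\G y$, if and only if $f(x)\simeq_\K f(y)$ with $[f(x)]\neq[f(y)]$, if and only if $[f(x)]\sim[f(y)]$. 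So $\Phi$ is a graph isomorphism.

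The only delicate point is the bookkeeping between $\simeq$ and $\sim$ and between twins and equality. This is resolved by the remark that distinct true twins are always adjacent, and by the identity $\N_\K(f(x))=f(\N_\G(x))$, which uses surjectivity in an essential way.
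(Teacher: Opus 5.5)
Your proposal is correct and follows essentially the same route as the paper: the quotient maps $x\mapsto[x]$ give (i)$\Rightarrow$(ii), and for (ii)$\Rightarrow$(i) a full pullback $f\colon V(\G)\to V(\K)$ induces the isomorphism $[x]\mapsto[f(x)]$ of skeletons, with surjectivity needed for well-definedness. Your identity $\N_\K(f(x))=f(\N_\G(x))$ is a cleaner packaging of the paper's argument, which works with witness vertices $z\in V(\G)$ and $w\in V(\K)$ that separate closed neighbourhoods.
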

\begin{proof}
\noindent
[(i) $\Rightarrow$ (ii)]. Define
\[
f_\G \colon V(\G) \to V(\G^\circ) \text{ by } f_\G(x)=[x] \qand f_\H \colon V(\H) \to V(\H^\circ) \text{ by } f_\H(z)=[z].
\]
Note that by definition of the skeletons 
\[
x\simeq_\G y \Leftrightarrow f_\G(x)\simeq_{\G^\circ} f_\G(y) \qand w\simeq_\H z \Leftrightarrow f_\H(w)\simeq_{\H^\circ} f_\G(z),
\]
and therefore $\G$ and $\H$ are full pullbacks of $\G^\circ$ and $\H^\circ$, respectively.

\medskip
\noindent
[(ii) $\Rightarrow$ (i)]. Suppose that $\K$ is a graph and let $f \colon V(\G) \to V(\K)$ be a full pullback map. Consider the skeleton $ \G^\circ$ of $\G$; we will show that $f$ induces a graph isomorphism from $ \G^\circ$ to $\K^\circ$. 

First of all, define the map $ f^\circ \colon \G^\circ \to \K^\circ$ by $ f^\circ([x])= [f(x)]$. To see that $f^\circ$ is well-defined and injective note that $[x] \neq [y]$  if and only if there exists a $z \in V(\G)$ such that $x \simeq_{\G} z$ while $y \not\simeq_{\G} z$ (or vice versa). 
As $f$ is a full pullback this is equivalent to the existence of a $w \in V(\K)$ such that 
$f(x) \simeq_{\K} w$ while  $f(y) \not\simeq_{\K} w$ (or vice versa), which is equivalent to $[f(x)] \neq [f(y)]$.
Moreover, $f^\circ$ is clearly surjective, as $f$ is surjective.

Finally, if $ [x]\sim_{\G^\circ} [y]$ then $x \sim_{\G}y$ and there exists a $z \in \G$ such that $x \sim_{\G} z$ but $ z \nsim_{\G} y$ (or vice versa). This means that $ f(x) \simeq_{\K} f(z)$ and $ f(z) \not \simeq_{\K} f(y) $ which implies $ [f(x)] \sim_{\K^\circ} [f(y)]$. Reversing the arguments yields $ [x] \sim_{\G^\circ} [y]$ if and only if $ [f(x)] \sim_{\K^\circ} [f(y)]$. In particular, $f^\circ$ is a graph isomorphism. Hence if $f \colon V(\G) \to V(\K)$ and $ g\colon V(\H) \to V(\K) $ are full pullback maps we conclude that $\G^\circ \cong \K^\circ \cong \H^\circ$, as required.
\end{proof}

Let $\G=(V(\G),E(\G))$ be a graph and let $\bo{n}=(n_x)_x\in \bN^{V(\G)}$.
The \emph{$\bo{n}$-clique blow-up} of $\G$, denoted by $\G\langle \bo{n} \rangle$, is the graph obtained by replacing each vertex $x\in V(\G)$ with a clique $C_x$ of size $n_x$, and declaring that for $x\neq y$, every vertex of $C_x$ is adjacent to every vertex of $C_y$ if and only if $x$ is adjacent to $y$ in $\G$.

\begin{remark} \label{rmk:cliqueblow}
Let $\G$ and $\H$ be graphs. Then $\G$ and $\H$ are clique blow-ups of isomorphic graphs if and only if their skeletons are isomorphic.
Indeed, if 
\[
\G=\K\langle \bo{n}\rangle \qand \H=\F\langle \bo{m}\rangle
\]
for isomorphic graphs $\K\cong \F$, then by the definition of adjacency of blow-ups of graphs we obtain 
\[
\G^\circ=\K^\circ \cong \F^\circ=\H^\circ.
\]
Conversely, if $\G$ and $\H$ have isomorphic skeletons, then each is obtained as a clique blow-up of its skeletons by replacing each vertex with its corresponding true-twin equivalence class.
\end{remark}

\begin{figure}[!ht]
\centering
\begin{tikzpicture}


\filldraw[black] (-2, 1) circle (2pt) node[anchor=east] {$a_1$};
\filldraw[black] (-2,-1) circle (2pt) node[anchor=east] {$a_2$};
\filldraw[black] (0, 0) circle (2pt) node[anchor=south] {$a_{3}$};

\filldraw[black] (2, 2) circle (2pt) node[anchor=west] {$a_4$};
\filldraw[black] (2, 0) circle (2pt) node[anchor=west] {$a_5$};
\filldraw[black] (2,-2) circle (2pt) node[anchor=west] {$a_6$};

\draw (-2, 1)--(-2,-1);

\draw (-2, 1)--(0,0);
\draw (-2,-1)--(0,0);

\draw (0,0)--(2,2);
\draw (0,0)--(2,0);
\draw (0,0)--(2,-2);

\draw (2,2)--(2,0);
\draw (2,2)--(2,-2);
\draw (2,0)--(2,-2);

\node at (0,-3) {$\G = P_3\langle (2,1,3)\rangle$};


\filldraw[black] (6,0) circle (2pt) node[anchor=south] {$b_{1}$};

\filldraw[black] (8,1) circle (2pt) node[anchor=south] {$b_2$};
\filldraw[black] (8,-1) circle (2pt) node[anchor=north] {$b_3$};

\filldraw[black] (10,1) circle (2pt) node[anchor=south] {$b_4$};
\filldraw[black] (10,-1) circle (2pt) node[anchor=north] {$b_{5}$};

\draw (6,0)--(8,1);
\draw (6,0)--(8,-1);

\draw (8,1)--(8,-1);

\draw (8,1)--(10,1);
\draw (8,1)--(10,-1);
\draw (8,-1)--(10,1);
\draw (8,-1)--(10,-1);

\draw (10,1)--(10,-1);

\node at (8,-3) {$\H = P_3\langle (1,2,2)\rangle$};

\end{tikzpicture}
\caption{An example of graphs with TRO-equivalent graph operator systems. Both are clique blow-ups of the path $P_3$ and their skeleton is $P_3$.}
\label{fig:blowups}
\end{figure}
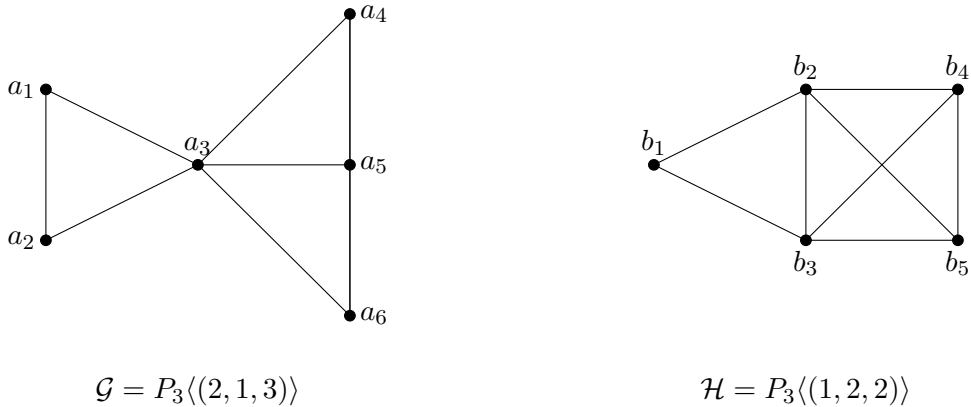


\section{Morita equivalence for quantum graphs}\label{sec:qgraphs}

In this section, we pursue the following main objectives. First, we introduce the notions of a co-homomorphism and an associated pullback map in the setting of quantum graphs, and examine their relationship to the pullback constructions defined in \cite{Wea21} and \cite{EKT21}. Second, we construct the skeleton of an irreducibly acting quantum graph and show that it extends the notion of a skeleton of a graph. Third, 
we extend \cite[Corollary 7.6]{EKT21} from the classical graph setting to the broader framework of irreducibly acting quantum graphs. Finally, we characterise a simultaneous TRO-equivalence of quantum graphs and their associated algebras and apply it in the setting of noncommutative graphs.

\subsection{Co-homomorphisms and pullbacks of quantum graphs}
We begin by recalling the notion of a quantum graph, as introduced in \cite{Wea12}. Let $H$ be a finite-dimensional Hilbert space, and $\A \subseteq \B(H)$ a von Neumann algebra, i.e., $\A=\A''$. A \textit{quantum graph} on $\A$ is an operator system $\S \subseteq \B(H)$ that is a bimodule over $\A'$, that is,  $\A'\S\A' \subseteq \S$. For brevity, if $\S$, $\A$ and $H$ are as above we will often say that $(\S,\A)$ is a quantum graph acting on $H$. We may assume without loss of generality that for a quantum graph $(\S,\A)$ the algebra $\A$ is unital and $1_\S=1_\A=I_H$. We will say that $(\S,\A)$ is \emph{acting irreducibly} on $H$ if $\S$ is acting irreducibly on $H$.

\begin{example}
Let $\G$ be a graph on $n$ vertices. Then the corresponding graph operator system $\S_{\G} \subseteq M_{n}$ is a quantum graph on $\B = D_{n}$. In particular, by \cite[Proposition 1.1]{PPS89} we have that an operator subsystem $ \S\subseteq M_n$ that is a bimodule over $D_n$, gives rise to a classical graph in the sense that there exists a graph $ \G$ on $n$ vertices such that $\S=\S_{\G}$.
\end{example}

\begin{example}
Any operator system $\S \subseteq \B(H)$ on a finite-dimensional Hilbert space $H$ is trivially a bimodule over $\bC I_H$, and hence it defines a quantum graph on $\A = \B(H)$. Such quantum graphs will be called \emph{noncommutative graphs}. In \cite[Lemma~2]{DUAN09} it is shown that every operator system $\S \subseteq \B(H)$ arises as the noncommutative graph of a quantum channel, so this terminology is consistent.
\end{example}

We recall the definitions of pushforwards and pullbacks of quantum graphs, as given in \cite{Wea21}. Let $(\S,\A)$ and $(\T, \B)$ be quantum graphs acting on Hilbert spaces $H$ and $K$, respectively.
Let $\vphi\colon \B\to \A$ be a unital completely positive map with Kraus form
\[
\vphi(b)=\sum_{i=1}^r v_i^* b v_i,
\qquad v_i\in \B(H,K).
\]
\begin{enumerate}
\item
The \emph{pushforward of $(\S, \A)$ along $\vphi$}, denoted by
$\S^{\rightarrow\vphi}$, is the $\B'$-operator bimodule generated by
\[
\{v_i s v_j^* : s\in\S \text{ and } i,j=1,\dots,r\}
\subseteq \B(K).
\]
\item
The \emph{pullback of $(\T, \B)$ along $\vphi$}, denoted by
$\T^{\leftarrow\vphi}$, is the $\A'$-operator bimodule generated by
\[
\{v_i^*t v_j : t\in\T \text{ and } i,j=1,\dots,r \}
\subseteq \B(H).
\]
\end{enumerate}

\begin{remark}
It is shown in \cite{Wea21} (see also \cite{Daws24}) that $\S^{\rightarrow\vphi}$ and $\T^{\leftarrow\vphi}$ are independent of the Kraus representation of $\vphi$. Moreover, in \cite[Theorem 7.5]{Daws24} it is shown that the weak*-closure of the linear span of the set 
\[
\{v_i^*t v_j : t\in\T \text{ and } i,j \in [r] \}
\]
is automatically an $\A'$-bimodule and hence in our finite-dimensional setting we have 
\[
\T^{\leftarrow\vphi}=\spn\{v_i^*t v_j : t\in\T \text{ and } i,j=1,\dots,r \}.
\]
\end{remark}

The following definition of a co-homomorphism is the quantum graph analogue of the one given in \cite[Section 7]{EKT21}.
\begin{definition}
Let $(\S,\A)$ and $(\T, \B)$ be quantum graphs acting on  Hilbert spaces $H$ and $K$, respectively.
A \emph{co-homomorphism} (from $\T$ to $\S$) is a unital completely positive map
\[
\vphi \colon \B \longrightarrow \A
\]
with Kraus operators $(v_i)_{i=1}^r$ such that
\[
\vphi(b)=\sum_{i=1}^r v_i^*b v_i
\qand
v_i^*\T v_j \subseteq \S
\foral i,j=1,\dots,r.
\]
\end{definition}

\begin{remark}
The notion of a co-homomorphism \cite{EKT21} coincides with that of a noncommutative graph homomorphism as introduced by Stahlke in \cite{Sta16}. In \cite{Sta16} the theory is formulated in terms of orthogonal complements of operator systems rather than the operator systems themselves. Since our approach focuses on operator systems, it is more natural to work directly with the present formulation. In the classical setting, this correspondence recovers the usual notion of graph homomorphism. Indeed one may verify that by~\cite[Theorem~8]{Sta16}, for graphs $\G$ and $\H$, there exists a graph homomorphism from $\G$ to $\H$ if and only if there exists a co-homomorphism from $\S_{\ol{\H}}$ to $\S_{\ol{\G}}$ 
where by $\ol{\G}$ and $\ol{\H}$ we denote the complement graphs of the graphs $\G$ and $\H$, respectively.
\end{remark}

Let $(\S,\A)$ and $(\T, \B)$ be quantum graphs acting on  Hilbert spaces $H$ and $K$, respectively, and let $ \vphi \colon \B \to \A$ be a completely positive map. We may assume  that $\vphi$ admits some Kraus operators $(v_i)_{i=1}^r\subseteq \B(H,K)$ (for example, use Arveson's Extension Theorem to extend $\vphi$ to $\B(K)$) such that 
\[
\vphi(b) = \sum_{i=1}^rv_i^* b v_i \foral b\in \B.
\]
Define 
\[
\X_{\vphi}:= \spn\{bv_ia: b\in \B', a \in \A', i=1,\dots,r\},
\]
that is, the $\B'$-$\A'$ bimodule generated by $ \spn\{v_i:i\in [r]\}$.
By \cite[Proposition 6.1]{KL26} we have that $\X_\vphi$ does not depend on the choice of Kraus operators $v_i$ for $i \in [r]$.

\begin{definition} \label{D:pullbackmap}
Let $(\S,\A)$ and $(\T, \B)$ be quantum graphs acting on  Hilbert spaces $H$ and $K$, respectively. 
\begin{enumerate}
\item A  \emph{strong co-homomorphism} from $ (\T,\B)$ to $ (\S,\A)$  is a unital completely positive map $\vphi \colon \B \to \A$ with Kraus operators $(v_i)_{i=1}^r$ such that 
\[
v_i^*\T v_i \subseteq \S, \qand
v_i\,\S\,v_j^* \subseteq \T
\foral i,j=1,\dots,r.
\]
\item A \emph{pullback} from $ (\T,\B)$ to $ (\S,\A)$ is a unital $*$-homomorphism $\theta : \B \to \A$  that is a strong co-homomorphism.
\end{enumerate}
If in addition, $\vphi$ is faithful (resp. $\theta$)
we say that $\vphi$  is a \emph{full} strong co-homomorphism (resp. $\theta$ is a  full pullback). 
If $\theta$ is a (full) pullback we say that \emph{$(\S,\A)$ is a (full) pullback of $(\T,\B)$}.
\end{definition}
Clearly for a unital completely positive map
\[
\text{pullback} \Rightarrow \text{ strong co-homomorphism} \Rightarrow \text{co-homomorphism}.
\]
We note that if $ \vphi \colon \B \to \A$ is a strong co-homomorphism from $ (\T,\B)$ to $ (\S,\A)$, then using the bimodule structures of $\S$ and $\T$ yields
\[
\X_{\vphi}^*\T \X_{\vphi} \subseteq \S \qand  \X_{\vphi} \S \X_{\vphi}^* \subseteq \T.
\]
On the other hand, if $ \X_{\vphi}^*\T \X_{\vphi} \subseteq \S$ and $ \X_{\vphi} \S \X_{\vphi}^* \subseteq \T$ then  by \cite[Proposition 6.1]{KL26} we may pick Kraus operators $(v_i)_{i=1}^r$ for $\vphi$ so that $\X_{\vphi} = \spn\{v_i:i\in [r]\}$ and thus obtain the inclusions
\[
v_i^*\T v_j \subseteq \S \qand v_i\,\S\,v_j^* \subseteq \T
\foral i,j=1,\dots,r.
\]
We will make regular use of \cite[Proposition 6.1]{KL26} throughout this paper without further mention.

\medskip

Let $\B \subseteq \B(K)$ be a von Neumann algebra and $ v\in \B(K)$. Set $p_\B(v)$ to be the $\B$-support projection of $v$, that is,
\[
p_\B(v):=\inf\{p \in {\rm proj}(\B) : \ran(v) \subseteq \ran(p)\}.
\]
\begin{lemma}\label{L:injectivity}
Let $(\S,\A)$ and $(\T, \B)$ be quantum graphs acting on Hilbert spaces $H$ and $K$, respectively. Let $\vphi \colon \B \to \A$ be unital completely positive with Kraus operators $(v_i)_{i=1}^r$ that span $ \X_{\vphi}$ and set $v=\sum_{i=1}^rv_iv_i^*$. Then, the following hold:
\begin{enumerate}
\item The $\B$-support projection  $p_{\B}(v)$ is the projection onto $ [\X_{\vphi}H]$. In particular, 
\[ 
[\X_\vphi H] = K \quad \text{if and only if} \quad  p_\B(v)=I_K;
\]
\item The map $\vphi $ is faithful if and only if $ [\X_{\vphi}H]=K$;

\end{enumerate}
\end{lemma}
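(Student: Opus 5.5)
For (i), the plan is to compute $[\X_\vphi H]$ and then identify it with the range of a projection in $\B$. Since $\X_\vphi=\spn\{v_i\}$ is by hypothesis a $\B'$-$\A'$-bimodule, we have $\B'\X_\vphi\subseteq\X_\vphi$. Hence the subspace $L:=[\X_\vphi H]$ is invariant under $\B'$. As $\B'$ is selfadjoint, $L$ is reducing, so the projection $q$ onto $L$ lies in $\B''=\B$.

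Next, $\ran(v)\subseteq L$ because $v=\sum_i v_iv_i^*$. Conversely, $\ran(v)=\ran(v^{1/2})$, and $\ran(v)^\perp=\ker v=\bigcap_i\ker v_i^*$ since $v\ge0$ and $\sca{\xi,v\xi}=\sum_i\|v_i^*\xi\|^2$. Therefore $\ran(v)=\sum_i\ran(v_i)=L$, using that $\X_\vphi=\spn\{v_i\}$ and finite dimensionality. So $q$ is the range projection of $v$, it lies in $\B$, and it is clearly the smallest projection in $\B$ (indeed in $\B(K)$) whose range contains $\ran(v)$. Thus $p_\B(v)=q$, and the "in particular" statement follows immediately.

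For (ii), suppose first that $L=K$, and let $b\in\B_+$ with $\vphi(b)=0$. Then $\sum_i v_i^*bv_i=0$ forces $b^{1/2}v_i=0$ for all $i$, so $b^{1/2}$ vanishes on $\sum_i\ran(v_i)=L=K$. Hence $b=0$, and $\vphi$, being positive, is faithful. Conversely, suppose $L\neq K$. Then $p:=I_K-q$ is a nonzero projection, and by (i) it lies in $\B$. Since $pv_i=0$ for all $i$, we get $\vphi(p)=\sum_i v_i^*pv_i=0$, so $\vphi$ is not faithful.

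The only delicate point is the claim $q\in\B$, which needs the bimodule property of $\X_\vphi$ under $\B'$. The standing assumption that the $v_i$ span $\X_\vphi$ provides exactly this. The rest is routine.
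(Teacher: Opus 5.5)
Your proposal is correct and follows essentially the same route as the paper: the projection $q$ onto $[\X_\vphi H]$ lies in $\B$ because that subspace is $\B'$-reducing, and positivity of $\sum_i v_iv_i^*$ (resp. $\sum_i v_i^*bv_i$) forces each summand to vanish, which gives (i) and both directions of (ii). The only cosmetic difference is that you identify $[\X_\vphi H]$ with $\ran(v)$ directly via $\ker v=\bigcap_i\ker v_i^*$, whereas the paper proves the two inequalities $p_\B(v)\le q$ and $q\le p_\B(v)$, the latter from $(I_K-p_\B(v))v(I_K-p_\B(v))=0$.
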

\begin{proof}
\noindent 
(i) We need to show that $p_\B(v)=q$ where $q$ is the projection on $[\X_\vphi H]$. We start by showing that $p_\B(v)\leq q$. It is evident that $[\X_\vphi H]$ is invariant for $\B'$ and hence it is reducing. Consequently, we obtain $q \in \B''= \B$. It remains to show that $\ran(v) \subseteq\ran (q)$ and then by the definition of $p_\B(v)$ we will get $p_\B(v) \leq q$. But this follows from the fact that $[\X_\vphi H] \supseteq \ran(v)$ as both $\A'$ and $\B'$ are unital. We now show that $q\leq p_\B(v)$. Note that we have
\[
(I_K-p_\B(v)) ( \sum_{i=1}^rv_iv_i^*)(I_K -p_\B(v))=0,
\]
and therefore we obtain that 
\[
(I_K-p_\B(v)) v_i =0 \foral i \in [r],
\]
and
\[
p_\B(v)v_i=v_i \foral i \in [r].
\]
In particular, $[\X_\vphi H] \subseteq \ran (p_\B(v))$ as required.

\medskip
\noindent
(ii) Suppose that $\vphi$ is faithful. We have 
\[
(I_K-p_\B(v))v_i=0 \foral i\in[r]
\]
and thus
\[
\vphi((I_K-p_\B(v))^*(I_K-p_\B(v)))=\vphi((I_K-p_\B(v))) = \sum_iv_i^*(I_K-p_\B(v))v_i=0.
\]
Faithfulness of $\vphi$ implies that $I_K-p_\B(v)=0$.

Conversely, assume that $[\X_{\vphi}H]=K$. For $ x^*x\in \ker\vphi$ we have
\[
0=\vphi(x^*x)= \sum_{i=1}^rv_i^*x^*xv_i = \sum_{i=1}^r(xv_i)^* (xv_i),
\]
which implies that $ xv_i=0$ for all $i\in [r]$. Consequently $ x v_i \eta=0$ for all $i\in[r]$ and all $\eta \in H$. Since $ [\X_{\vphi}H]=K$ we obtain that $x \xi=0$ for all $ \xi \in K$ implying that $x=0$. Thus $\vphi$ is faithful. 
\end{proof}

\begin{remark}\label{rmk_mult}
Let  $ \theta\colon \B \to \A $ be a unital $*$-homomorphism between finite dimensional von Neumann algebras with $ \theta (x)= \sum_{i=1}^r v_i^*x v_i$.  Then we have
\[
v_iv_j^*\in \B' \foral i,j \in [r].
\]
Indeed, define the column isometry 
\( 
V:=(v_i)_{i=1}^r \in \B(H, K^{(r)}),
\)
and note that
\[
\theta(b)=V^*(b\otimes I_r)V \foral b\in \B.
\]
Since $\theta$ is multiplicative, the range of $V H$ is invariant for $\pi(\B)$, where 
\[
\pi \colon \B \to M_r(\B); \pi(b) = b \otimes I_r.
\]
Hence, we get
\[
V V^{*} \in  \pi(\B)' = \B' \otimes M_r.
\]
and thus 
\[
v_iv_j^*\in \B' \foral i,j \in [r].
\]
\end{remark}

We now provide the connection between co-homomorphism and pullback maps with the pushforwards and pullbacks defined in \cite{Wea21}. 
\begin{theorem}\label{thm:ucppullback}
Let $(\S,\A)$ and $(\T,\B)$ be quantum graphs acting on Hilbert spaces $H$ and $K$, respectively. Let $\vphi \colon \B \to \A$
be a unital completely positive map.
The following are equivalent:
\begin{enumerate}
\item $\S=\T^{\leftarrow \vphi}$ and $\T=\S^{\rightarrow \vphi}$;
\item $\vphi$ is a full strong co-homomorphism.
\end{enumerate}

Moreover, if  $\theta \colon \B \to \A$ is a $*$-homomorphism then $\S=\T^{\leftarrow \theta}$ if and only if $\theta$ is a pullback.
\end{theorem}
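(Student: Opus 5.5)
The plan is to prove the two directions of the main equivalence separately, and then handle the $*$-homomorphism statement using Remark~\ref{rmk_mult}. Throughout I fix Kraus operators $(v_i)_{i=1}^r$ for $\vphi$ spanning $\X_\vphi$, as \cite[Proposition 6.1]{KL26} allows. Unitality of $\vphi$ gives $\sum_i v_i^*v_i=I_H$. I read the first condition in Definition~\ref{D:pullbackmap} as $v_i^*\T v_j\subseteq\S$ for all $i,j$, and recall that, by the earlier discussion, the strong co-homomorphism inclusions are equivalent to $\X_\vphi^*\T\X_\vphi\subseteq\S$ and $\X_\vphi\S\X_\vphi^*\subseteq\T$.

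\emph{(i) $\Rightarrow$ (ii).} The generators $v_i^*tv_j$ of $\T^{\leftarrow\vphi}=\S$ and $v_isv_j^*$ of $\S^{\rightarrow\vphi}=\T$ lie in $\S$ and $\T$ respectively, so $\vphi$ is a strong co-homomorphism. For fullness, I use that every element of $\S^{\rightarrow\vphi}$ is a sum of terms $b\,v_isv_j^*\,b'$ with $b,b'\in\B'$. Each such term has range inside $[\B'\X_\vphi H]=[\X_\vphi H]$. Since $I_K\in\T=\S^{\rightarrow\vphi}$, this forces $K=[\X_\vphi H]$, and Lemma~\ref{L:injectivity}(ii) then gives that $\vphi$ is faithful.

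\emph{(ii) $\Rightarrow$ (i).} The strong co-homomorphism inclusions, together with the bimodule properties of $\S$ and $\T$, give $\T^{\leftarrow\vphi}\subseteq\S$ and $\S^{\rightarrow\vphi}\subseteq\T$. For $\S\subseteq\T^{\leftarrow\vphi}$, I write
\[
s=\sum_{i,j}v_i^*(v_isv_j^*)v_j ,
\]
and each $v_isv_j^*$ lies in $\T$; no faithfulness is needed here.

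The main step is $\T\subseteq\S^{\rightarrow\vphi}$. Set $\mathcal J:=\spn(\X_\vphi\X_\vphi^*)$. This is a selfadjoint $\B'$-bimodule, because $\X_\vphi$ is a $\B'$-$\A'$-bimodule. I first check that $\mathcal J\T\mathcal J\subseteq\S^{\rightarrow\vphi}$ by regrouping a typical product as $x\,(y^*ty')\,x'^*$, where the middle factor lies in $\S$. Using $\S^{\rightarrow\vphi}\subseteq\T$, an induction then gives $\mathcal J^n\T\mathcal J^n\subseteq\S^{\rightarrow\vphi}$ for all $n$, and hence $\ca(\mathcal J)\,\T\,\ca(\mathcal J)\subseteq\S^{\rightarrow\vphi}$. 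By finite-dimensionality, $\ca(\mathcal J)$ contains the projection onto $[\mathcal J K]$. Since $[\X_\vphi^*K]\supseteq\ran(\sum_iv_i^*v_i)=H$, we have $[\mathcal J K]=[\X_\vphi H]$, and this equals $K$ by faithfulness and Lemma~\ref{L:injectivity}. Therefore $I_K\in\ca(\mathcal J)$ and $\T\subseteq\S^{\rightarrow\vphi}$. I expect this step, producing the identity from $\X_\vphi\X_\vphi^*$, to be the only delicate point.

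\emph{The $*$-homomorphism statement.} If $\theta$ is a pullback, then $\S=\T^{\leftarrow\theta}$ follows exactly as in the two inclusions above; faithfulness is not used. Conversely, suppose $\S=\T^{\leftarrow\theta}$. Then $v_i^*\T v_j\subseteq\S$ holds immediately. For the other inclusion I expand a general $s$ as a combination of terms $v_k^*tv_l$ and obtain
\[
v_isv_j^*\in\spn\{(v_iv_k^*)\,t\,(v_lv_j^*)\}.
\]
By Remark~\ref{rmk_mult} we have $v_iv_k^*,\,v_lv_j^*\in\B'$, so these elements lie in $\T$ because $\T$ is a $\B'$-bimodule. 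Hence $\theta$ is a strong co-homomorphism, and therefore a pullback.
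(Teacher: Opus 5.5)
Most of your proof follows the paper's: (i)$\Rightarrow$(ii), the inclusions $\T^{\leftarrow\vphi}\subseteq\S\subseteq\T^{\leftarrow\vphi}$ and $\S^{\rightarrow\vphi}\subseteq\T$, and the $*$-homomorphism part via Remark~\ref{rmk_mult} are all correct. The one place you depart is $\T\subseteq\S^{\rightarrow\vphi}$, and that is where there is a gap.

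For that step you work on the $K$-side with $\mathcal J=\spn(\X_\vphi\X_\vphi^*)$. The paper instead works on the $H$-side with $\ca(\X_\vphi^*\X_\vphi)$ and the TRO $\M=[\X_\vphi\ca(\X_\vphi^*\X_\vphi)]$, using $I_K\in[\M\M^*]$. Your variant is legitimate. However, one inference in it is not justified: from $\mathcal J^n\T\mathcal J^n\subseteq\S^{\rightarrow\vphi}$ for all $n$ you conclude $\ca(\mathcal J)\T\ca(\mathcal J)\subseteq\S^{\rightarrow\vphi}$. Expanding $I_K\in\ca(\mathcal J)=\spn\bigcup_n\mathcal J^n$ produces mixed terms $\mathcal J^n\T\mathcal J^m$ with $n\neq m$, and your induction says nothing about these.

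The inference fails in general. Take $\mathcal J=\bC\,\operatorname{diag}(1,2)$ and $\T=\spn\{\eps_{11},\eps_{22},\eps_{12}+\eps_{21}\}$. Then $\mathcal J^n\T\mathcal J^n\subseteq\T$ for every $n$. Yet $\ca(\mathcal J)=D_2$, and $\eps_{11}(\eps_{12}+\eps_{21})\eps_{22}=\eps_{12}\notin\T$.

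The missing observation uses unitality of $\vphi$ once more. For $x,y\in\X_\vphi$,
\[
xy^*=x\Big(\sum_i v_i^*v_i\Big)y^*=\sum_i (xv_i^*)(v_iy^*)\in\mathcal J^2 ,
\]
so $\mathcal J\subseteq\mathcal J^2$, and hence $\mathcal J^n\subseteq\mathcal J^{n+1}$ for all $n$. By finite-dimensionality the chain stabilises at some $\mathcal J^N$. That space is closed under products and adjoints, so $\ca(\mathcal J)=\mathcal J^N$. Then $\T=I_K\T I_K\subseteq\mathcal J^N\T\mathcal J^N\subseteq\S^{\rightarrow\vphi}$.

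On the paper's $H$-side this issue does not arise. There the unit $I_H=\sum_iv_i^*v_i$ already lies in $\spn(\X_\vphi^*\X_\vphi)$, so the powers are trivially nested.

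You can also bypass $\ca(\mathcal J)$ entirely. Since your $v_i$ span $\X_\vphi$, the positive element $v=\sum_iv_iv_i^*\in\mathcal J$ has range $[\X_\vphi H]$, which is $K$ by Lemma~\ref{L:injectivity}. So $v$ is invertible. The map $t\mapsto vtv$ then sends $\T$ injectively into $\S^{\rightarrow\vphi}\subseteq\T$, and a dimension count gives $\T=\S^{\rightarrow\vphi}$.

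With either fix your proposal is complete.
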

\begin{proof}
Choose Kraus operators  such that  $\X_{\vphi} = \spn\{v_i:i\in[r]\}$.

\medskip
\noindent
[(i) $\Rightarrow$ (ii)].
Assume $\S=\T^{\leftarrow \vphi}$ and  $\T=\S^{\rightarrow \vphi}$. The inclusion $v_i^*\,\T\,v_j\subseteq \S$ for all $i,j \in [r]$ follows directly from the definition of $\T^{\leftarrow \vphi}$ and hence $ \X_{\vphi}^* \T \X_{\vphi} \subseteq \S$. On the other hand, we have $ \X_{\vphi} \S \X_{\vphi}^* \subseteq \S^{\rightarrow \vphi} =\T$ and thus $\vphi$ is a strong co-homomorphism.  To see that $\vphi$ is full note that $ I_K\in \T = \S^{\rightarrow\vphi}$ hence 
\[
I_K = \sum_{\ell} b_\ell v_{i(\ell)}s_{\ell} v_{j(\ell)}^*b'_\ell \quad \text{for some } b_\ell, b'_\ell \in \B' \text{ and } s_\ell \in \S.
\]
Assume $\xi \in K$, then 
\[
\xi=I_K \xi = \sum_{\ell} b_\ell v_{i(\ell)} s_{\ell} v_{j(\ell)}^*b_{\ell}'\xi \in [\B'\X_{\vphi} H]= [\X_{\vphi}H].
\]
Thus $[\X_{\vphi} H]= K$ and by Lemma \ref{L:injectivity} we obtain that $\vphi$ is faithful.

\medskip

\noindent
[(ii) $\Rightarrow$ (i)].
Assume that $v_i^*\T v_j\subseteq\S$ and
$v_i\S v_j^*\subseteq\T$ for all $i,j$.
The first inclusion together with the fact that $\S$ is an $\A'$-bimodule implies $\T^{\leftarrow \vphi}\subseteq\S$.
Conversely, let $s$ be in $\S$. Since $\sum_i v_i^*v_i=I_H$, we have
\[
s
=(\sum_i v_i^*v_i) s(\sum_j v_j^*v_j)
=\sum_{i,j} v_i^* (v_i s v_j^*) v_j.
\]
By assumption, $v_i s v_j^*\in\T$, and hence each summand lies in $\T^{\leftarrow \vphi}$.
Therefore $s\in\T^{\leftarrow \vphi}$ and so $\S\subseteq\T^{\leftarrow \vphi}$, as required.

Assume that moreover $ \vphi $ is full, that is $[\X_{\vphi}H]=K$. Since $v_i \S v_j^*\subseteq \T$ and $ \T$ is a $\B'$-bimodule we have $\S^{\rightarrow\vphi}\subseteq\T$.
Using that $ \vphi$ is unital, we can see that $ [\X_{\vphi}^*K]=H$, that is, $\X_{\vphi}$ acts non-degenerately. Now set $ \A_{\X_{\vphi}} =\ca(\X_{\vphi}^*\X_{\vphi})$ and $\M = [ \X_{\vphi} \A_{\X_{\vphi}}]$. Note that $\A_{\X_{\vphi}}$ is unital, $\S$ is an $\A_{\X_{\vphi}}$-bimodule and $\M$ is non-degenerate (see the proof of Proposition \ref{prop:nondeg_TRO}). In particular, $ [\M\M^*] $ is unital and
\[
\M^*\T \M \subseteq \S.
\]
Thus
\[
\T \subseteq [\M\M^* \T \M \M^*] \subseteq [\M\S\M^*] \subseteq [\X_{\vphi}\A_{\X_{\vphi}}\S \A_{\X_{\vphi}} \X_{\vphi}^*]
 \subseteq [\X_{\vphi}\S \X_{\vphi}^*] \subseteq\S^{\rightarrow\vphi},
\]
concluding that $ \T = \S^{\rightarrow\vphi}$.

We now prove the last statement, let $\theta$ be a unital $*$-homomorphism and pick Kraus operators $(v_i)_{i=1}^r$ such that $\X_{\theta} = \spn\{v_i: i\in [r]\}$. The fact that if $\theta$ is a pullback implies that $\S=\T^{\leftarrow \theta}$ is already shown in the first paragraph of the proof of implication [(ii) $\Rightarrow$ (i)]. For the converse assume $\S=\T^{\leftarrow \theta}$.
The inclusion $v_i^*\,\T\,v_j\subseteq \S$ for all $i,j \in [r]$ follows directly from the definition of $\T^{\leftarrow \theta}$.
For the reverse inclusion, fix $s\in\S$. Since $H$ is finite-dimensional, by \cite[Theorem 7.5]{Daws24} we may write
\[
s=\sum_{k,\ell} v_k^*\,t_{k\ell}\,v_\ell
\qquad\text{with } t_{k\ell}\in\T.
\]
Then for fixed $i,j$ we obtain
\[
v_i s v_j^*
=\sum_{k,\ell} (v_iv_k^*)\,t_{k\ell}\,(v_\ell v_j^*).
\]
Since $\T$ is a $\B'$-bimodule and $v_iv_k^*,v_\ell v_j^*\in \B'$ for every $i,j,k,\ell \in [r]$ (see Remark \ref{rmk_mult}) we obtain that
each term of the sum above lies in $\T$ and hence $v_i s v_j^*\in\T$, as required.  
\end{proof}

The following proposition shows that pullbacks of quantum graphs coincide in the classical case with pullbacks of graphs in the sense of \cite{EKT21}.

\begin{proposition}\label{prop:classical-weaver-pullback}
Let $\G$ and $\H$ be graphs.
\begin{enumerate}
\item
If $f \colon V(\G)\to V(\H)$ is a pullback then the map $\theta_f \colon D_{V(\H)} \to D_{V(\G)}$ defined by
\[
\theta_f(\sum_{w\in V(\H)}\la_{w} \eps_{ww})
= \sum _{i \in V(\G)} \lambda_{f(i)} \eps_{ii}
\]
is a unital $*$-homomorphism that satisfies $\S_\G=\S_\H^{\leftarrow\theta_f}$.

\item Conversely, let $\theta \colon D_{V(\H)}\to D_{V(\G)}$ be a unital $*$-homomorphism such that $\S_\G=\S_\H^{\leftarrow\theta}$.
Then there exists a unique pullback $f\colon V(\G)\to V(\H)$ such that $\theta=\theta_f$. 
\end{enumerate}

In addition, $f$ is surjective if and only if $\theta_f$ is injective if and only if $[\X_{\theta_f} \,\bC^{V(\G)} ]= \bC^{V(\H)}$.
\end{proposition}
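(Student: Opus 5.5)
For part (i) I will compute a Kraus representation of $\theta_f$ directly. Set $v_i := \eps_{f(i),i} \in \B(\bC^{V(\G)},\bC^{V(\H)})$ for $i\in V(\G)$, so $v_i$ sends $e_i$ to $e_{f(i)}$. Then
\[
\sum_i v_i^*\Big(\sum_w \la_w\eps_{ww}\Big)v_i=\sum_i \la_{f(i)}\eps_{ii}.
\]
This shows that $\theta_f$ is a unital $*$-homomorphism with Kraus operators $(v_i)$. Since $\sum_i v_i^*v_i=\sum_i\eps_{ii}=I$, the map is unital.

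Next I compute $\S_\H^{\leftarrow\theta_f}$. We have
\[
v_i^*\eps_{wz}v_j=\delta_{w,f(i)}\delta_{z,f(j)}\eps_{ij}.
\]
Hence $\spn\{v_i^*\S_\H v_j\}$ is spanned by those $\eps_{ij}$ with $f(i)\simeq_\H f(j)$. By the pullback property of $f$, this set is exactly $\{\eps_{ij}: i\simeq_\G j\}$, which spans $\S_\G$. This span is already a $D_{V(\G)}$-bimodule, so by the remark on \cite[Theorem 7.5]{Daws24} we obtain $\S_\H^{\leftarrow\theta_f}=\S_\G$.

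For part (ii), a unital $*$-homomorphism $\theta\colon D_{V(\H)}\to D_{V(\G)}$ sends the minimal projections $\eps_{ww}$ to mutually orthogonal projections in $D_{V(\G)}$ summing to $I$. Each such projection is a sum of some $\eps_{ii}$, so the images partition $V(\G)$. Define $f(i)=w$ when $\eps_{ii}\le\theta(\eps_{ww})$. Then $\theta=\theta_f$ by linearity, and $f$ is unique because $\theta_f(\eps_{ww})=\sum_{f(i)=w}\eps_{ii}$ recovers the fibres of $f$.

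It remains to show that $f$ is a pullback. The pullback $\S_\H^{\leftarrow\theta}$ is independent of the Kraus representation, so I may use the Kraus family $(v_i)$ above for $\theta_f$. The computation from part (i) then gives
\[
\S_\H^{\leftarrow\theta}=\spn\{\eps_{ij}: f(i)\simeq_\H f(j)\}.
\]
Comparing this with $\S_\G=\spn\{\eps_{ij}:i\simeq_\G j\}$, which is spanned by matrix units, yields $i\simeq_\G j\iff f(i)\simeq_\H f(j)$.

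For the final equivalences, $\theta_f$ is injective iff $\theta_f(\eps_{ww})\neq0$ for every $w$, iff every fibre $f^{-1}(w)$ is nonempty, iff $f$ is surjective. For the third condition, $\X_{\theta_f}$ is the $D_{V(\H)}$-$D_{V(\G)}$ bimodule generated by the $v_i$, which is $\spn\{\eps_{f(i),i}\}$. Its range applied to $\bC^{V(\G)}$ is $\spn\{e_w: w\in f(V(\G))\}$, which equals $\bC^{V(\H)}$ iff $f$ is surjective. Alternatively, this last equivalence follows from Lemma~\ref{L:injectivity}(ii).

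The step most likely to need care is justifying that independence of the Kraus representation lets me use the specific family $(v_i)$ in part (ii). This is covered by the cited results of \cite{Wea21,Daws24}.
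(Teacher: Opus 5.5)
Your proposal is correct and follows essentially the same route as the paper: the explicit Kraus family $v_i=\eps_{f(i),i}$, the computation $v_i^*\eps_{wz}v_j=\delta_{w,f(i)}\delta_{z,f(j)}\eps_{ij}$, the partition of $V(\G)$ by the projections $\theta(\eps_{ww})$, and Kraus-independence of $\S_\H^{\leftarrow\theta}$ to compare the matrix-unit spans. The differences are minor. Multiplicativity of $\theta_f$ follows from its defining formula, not from the Kraus form. Your direct computation $\X_{\theta_f}=\spn\{\eps_{f(i),i}\}$ for the last equivalence is a concrete alternative to the paper's appeal to Lemma~\ref{L:injectivity}.
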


\begin{proof}
(i) The fact that $\theta_f$ is a unital $*$-homomorphism follows directly from its definition. 
For each $i\in V(\G)$ set
\[
v_i:=\eps_{f(i) i}\in M_{V(\H),V(\G)},
\]
and note that for every $w,z \in V(\H)$ we have
\[
v_i^*\eps_{wz}v_j
=\eps_{i,f(i)}\,\eps_{wz}\,\eps_{f(j),j}
=\de_{f(i),w}\de_{z,f(j)}\,\eps_{ij}.
\]
In particular,
\[
\sum_{i\in V(\G)} v_i^* \eps_{ww} v_i
=
\sum_{i\in V(\G)} \de_{f(i),w}\varepsilon_{ii}=\theta_f(\eps_{ww}),
\]
and thus $\theta_f$ admits the Kraus representation
\[
\theta_f(d)=\sum_{i\in V(\G)} v_i^* d v_i
\quad \text{for all } d \in D_{V(\H)}.
\]
By definition, $\S_\H^{\leftarrow\theta_f}$ is the $D_{V(\G)}$-bimodule generated by
\[
\{v_i^* t v_j: i,j\in V(\G), t\in \S_\H\}.
\]
Since $\S_\H$ is spanned by $\{\eps_{wz}: w\simeq_\H z\}$, the computation above yields
\begin{equation}\label{eq:pullback-span-canonical}
\S_\H^{\leftarrow\theta_f}
=
\spn\{v_i^* \eps_{wz} v_j : i,j\in V(\G), w\simeq_\H z\}
=
\spn\{\eps_{ij}: f(i)\simeq_\H f(j)\}.
\end{equation}
Hence
\[
\S_\G
=
\spn\{\eps_{ij}: i\simeq_\G j\}
=
\spn\{\eps_{ij}: f(i)\simeq_\H f(j)\}
=
\S_\H^{\leftarrow\theta_f},
\]
since $f$ is a pullback.

\medskip
\noindent
(ii) Let $\theta \colon D_{V(\H)}\to D_{V(\G)}$ be a unital $*$-homomorphism.
The projections 
\[
\{\theta(\eps_{ww})\}_{w\in V(\H)}\subseteq D_{V(\G)}
\]
are pairwise orthogonal and sum to $I_{V(\G)}$, so there exists a unique partition
\[
V(\G)=\bigsqcup_{w\in V(\H)} F_w
\text{ such that }
\theta(\eps_{ww})
=
\sum_{i\in F_w} \eps_{ii}.
\]
Define $f \colon V(\G)\to V(\H)$ by $f(i)=w$ if and only if $i\in F_w$. 
Then $\theta=\theta_f$, and uniqueness of $f$ follows from the definition of $\theta_f$.

Assume $\S_\G=\S_\H^{\leftarrow\theta}$. As noted in \cite{Daws24}, the pullback $\S_\H^{\leftarrow\theta_f}$ is independent of the Kraus representation, so we may compute it using the Kraus family $\{v_i:=\eps_{f(i),i}\}_{i\in V(\G)}$. By \eqref{eq:pullback-span-canonical},
\[
\S_\H^{\leftarrow\theta}
=
\S_\H^{\leftarrow\theta_f}
=
\spn\{\eps_{ij}: f(i)\simeq_\H f(j)\}.
\]
Thus
\[
\spn\{\eps_{ij}: i\simeq_\G j\}
=
\spn\{\eps_{ij}: f(i)\simeq_\H f(j)\}.
\]
Linear independence of the matrix units gives
\[
i\simeq_\G j \iff f(i)\simeq_\H f(j)
\quad \forall\, i,j\in V(\G),
\]
so $f$ is a pullback.

Finally, $f$ is surjective if and only if each $w\in V(\H)$ occurs as $f(i)$ for some $i$, which is equivalent to injectivity of $\theta_f$. The equivalence with the fullness condition $[\X_{\theta_f}\,\bC^{V(\G)}]=\bC^{V(\H)}$ follows from Lemma \ref{L:injectivity}.
\end{proof}

\begin{example}
The previous proposition shows that classical full pullbacks are precisely those implemented by unital $*$-homomorphisms. This cannot, in general, be extended to faithful unital completely positive maps.

Indeed: let $\G$ be the empty graph on three vertices and $\K_3$ the complete graph on three vertices. Thus
\[
\S_{\G}=D_3, \qquad \S_{\K_3}=M_3.
\]
Define a unital completely positive map
\[
\vphi \colon D_{V(\G)}\to D_{V(\K_3)}
\]
by
\[
\vphi(\varepsilon_{aa})=\tfrac12(\varepsilon_{11}+\varepsilon_{33}),\qquad
\vphi(\varepsilon_{bb})=\tfrac12(\varepsilon_{11}+\varepsilon_{22}),\qquad
\vphi(\varepsilon_{cc})=\tfrac12(\varepsilon_{22}+\varepsilon_{33}).
\]
Then $\vphi$ is faithful and admits a Kraus representation
\[
\vphi(d)=\sum_k v_k^* d v_k
\]
with Kraus operators
\[
\tfrac1{\sqrt2}\varepsilon_{a,1},\;
\tfrac1{\sqrt2}\varepsilon_{b,1},\;
\tfrac1{\sqrt2}\varepsilon_{b,2},\;
\tfrac1{\sqrt2}\varepsilon_{c,2},\;
\tfrac1{\sqrt2}\varepsilon_{a,3},\;
\tfrac1{\sqrt2}\varepsilon_{c,3}.
\]
A direct computation shows that
\[
\S_{\G}^{\leftarrow\vphi}=M_3=\S_{\K_3}.
\]

However, there is no surjective pullback map $f \colon V(\K_3)\to V(\G)$.
Indeed, the pullback condition
\[
i\simeq_{\K_3}j \iff f(i)\simeq_{\G}f(j)
\]
reduces here to $f(i)=f(j)$ for all $i,j$, so every pullback map must be constant.
Hence the only pullback maps are the three constant maps onto $a$, $b$, or $c$,
none of which is surjective.
\end{example}

\subsection{A true-twin construction for quantum graphs} \label{sec:quantumreduction}
In this subsection we will describe a  construction that will be particularly useful in the main theorem. This can be thought of as the quantum analogue of the true-twin reduction in classical graphs in which we obtain the skeleton of an irreducibly acting quantum graph.

Let $(\S,\A)$ be a quantum graph acting irreducibly on a Hilbert space $H$. Since $\S$ acts irreducibly, we may view $\A_{\S}$ concretely as a unital C*-subalgebra of $\B(H)$. Hence there exist positive integers
\[
\alpha_1,\dots,\alpha_k \qand n_1,\dots,n_k
\]
such that, up to unitary equivalence,
\[
H=\bigoplus_{i=1}^k (\bC^{\alpha_i}\otimes \bC^{n_i})
\qand
\A_{\S}
=
\bigoplus_{i=1}^k (M_{\alpha_i}\otimes I_{n_i})
\subseteq \B(H).
\]
Write
\[
H_i:=\bC^{\alpha_i}\otimes \bC^{n_i}
\foral i\in [k].
\]
Let $p_i $ denote the corresponding minimal central projections of $\A_{\S}$.
We will frequently use the identification 
\[
p_i \B(H) p_j \cong \B(H_j, H_i) \cong M_{\alpha_i,\alpha_j}\otimes M_{n_i,n_j},
\]
thus we view 
\[
p_i=I_{\alpha_i}\otimes I_{n_i}\in \B(H_i)\subseteq \B(H) \foral i\in [k].
\]
For $i,j\in [k]$ set
\[
\S_{ij}:=p_i\S p_j \subseteq \B(H_j,H_i).
\]
Note that $\S_{ij}\subseteq \S$ since $p_i,p_j \in \A_\S$ for all $i,j\in [k]$
and $\S$ is an $\A_{\S}$-bimodule. Moreover, we have
\[
(M_{\alpha_i}\otimes I_{n_i} ) \; \S_{ij } \subseteq \S_{ij} \qand \S_{ij}\;(M_{\alpha_j}\otimes I_{n_j}) \subseteq \S_{ij}.
\]
Indeed, if
\[
a\in M_{\alpha_i}\otimes I_{n_i},\quad b\in M_{\alpha_j}\otimes I_{n_j} \qand x\in \S_{ij},
\]
then write $ x = p_isp_j$, for $ s\in \S$ and hence
\[
axb
=
p_i (a p_i) s (p_j b) p_j
\in p_i \A_\S \S  \A_\S p_j \subseteq 
\S_{ij}.
\]
Fix $i,j\in [k]$ and let $\eps^{(i,j)}_{rs}$ denote the $(r,s)$-matrix unit of
$M_{\alpha_i,\alpha_j}$.
Let $ \eta \in \bC^{\al_i}$ and $ \xi\in \bC^{\al_j}$ be of norm one and consider the rank one operator $\xi\eta^*\in M_{\al_j,\al_i}$.
We set
\[
L_{\xi\eta^*} \colon M_{\alpha_i,\alpha_j}\otimes M_{n_i,n_j}\to M_{n_i,n_j}
\]
to be the left slice map associated with the rank one operator $\xi\eta^*\in M_{\alpha_j,\alpha_i}$, that is,
\[
L_{\xi\eta^*}(a\otimes b)=\operatorname{Tr}((\xi \eta^*) a) b \text{ for } a\otimes b \in  M_{\alpha_i,\alpha_j}\otimes M_{n_i,n_j}.
\]
We define
\[
\R_{ij}:=L_{\eps_{11}^{(j,i)}}(\S_{ij})\subseteq M_{n_i,n_j}.
\]

\begin{lemma}\label{lem:rij_structure}
For each $i,j\in [k]$, let $\R_{ij}$ be defined via the slice construction described above. Then the following hold:
\begin{enumerate}
\item The space $\R_{ij}$ is independent of the choice of rank one operator used in its definition.
\item We have the decomposition
\[
\S_{ij}
=
M_{\alpha_i,\alpha_j}\otimes \R_{ij}.
\]
\end{enumerate}
\end{lemma}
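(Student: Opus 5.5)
The whole argument rests on the two-sided module property already recorded before the statement: $\S_{ij}$ is a left $(M_{\alpha_i}\otimes I_{n_i})$-module and a right $(M_{\alpha_j}\otimes I_{n_j})$-module. Concretely, it is invariant under left multiplication by $\eps^{(i)}_{rs}\otimes I_{n_i}$ and right multiplication by $\eps^{(j)}_{rs}\otimes I_{n_j}$. The plan is to prove (ii) first for the specific slice $\eps^{(j,i)}_{11}$, and then deduce (i) from (ii).

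For (ii), write $x\in\S_{ij}$ as $x=\sum_{r,s}\eps^{(i,j)}_{rs}\otimes x_{rs}$ with $x_{rs}\in M_{n_i,n_j}$. By the definition of the slice map, $L_{\eps^{(j,i)}_{11}}(x)=x_{11}$, so $\R_{ij}=\{x_{11}:x\in\S_{ij}\}$.

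The key computation is the compression
\[
(\eps^{(i)}_{1r}\otimes I)\,x\,(\eps^{(j)}_{s1}\otimes I)=\eps^{(i,j)}_{11}\otimes x_{rs}\in\S_{ij}.
\]
Applying the same idea with different matrix units gives
\[
(\eps^{(i)}_{p1}\otimes I)(\eps^{(i,j)}_{11}\otimes y)(\eps^{(j)}_{1q}\otimes I)=\eps^{(i,j)}_{pq}\otimes y .
\]
Consequently, $y\in\R_{ij}$ if and only if $\eps^{(i,j)}_{11}\otimes y\in\S_{ij}$, and this holds if and only if $\eps^{(i,j)}_{pq}\otimes y\in\S_{ij}$ for all $p,q$. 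From this:
\begin{itemize}
\item every entry $x_{rs}$ of an element $x\in\S_{ij}$ lies in $\R_{ij}$, which gives $\S_{ij}\subseteq M_{\alpha_i,\alpha_j}\otimes\R_{ij}$;
\item every elementary tensor $\eps^{(i,j)}_{pq}\otimes y$ with $y\in\R_{ij}$ lies in $\S_{ij}$, which gives the reverse inclusion.
\end{itemize}

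For (i), fix unit vectors $\eta,\xi$. Once (ii) is known, any slice $L_{\xi\eta^*}$ applied to $\S_{ij}=M_{\alpha_i,\alpha_j}\otimes\R_{ij}$ has range contained in $\R_{ij}$. The range is also all of $\R_{ij}$: given $y\in\R_{ij}$, choose $a$ with $\operatorname{Tr}(\xi\eta^* a)=\sca{\eta,a\xi}=1$, for instance $a=\eta\xi^*$. Then $a\otimes y\in\S_{ij}$ and $L_{\xi\eta^*}(a\otimes y)=y$. Thus every rank one slice produces the same space $\R_{ij}$.

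I expect no real obstacle. The only delicate points are bookkeeping: getting the index conventions of the slice map $L_{\xi\eta^*}$ and of the identification $p_i\B(H)p_j\cong M_{\alpha_i,\alpha_j}\otimes M_{n_i,n_j}$ right, and checking that the multiplications $p_i\A_\S\S\A_\S p_j\subseteq\S_{ij}$ are legitimate. The latter was already shown before the statement.
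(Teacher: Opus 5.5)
Your proof is correct. Part (ii) is the same matrix-unit compression argument the paper uses.

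For (i) you take a different route. The paper proves (i) before and independently of (ii). It chooses unitaries $u^i,u^j$ with $u^ie_1=\eta$ and $u^je_1=\xi$, and checks the identity
\[
L_{\xi\eta^*}(x)=L_{\eps^{(j,i)}_{11}}\big((u^i\otimes I)^*x(u^j\otimes I)\big).
\]
It then concludes from the invariance of $\S_{ij}$ under these unitary multiplications. You instead establish the tensor decomposition for the fixed slice $\eps^{(j,i)}_{11}$ first. You then read off (i): any slice of $M_{\alpha_i,\alpha_j}\otimes\R_{ij}$ lands in $\R_{ij}$, and the choice $a=\eta\xi^*$ gives surjectivity.

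What each route buys:
\begin{itemize}
\item Your ordering is shorter and avoids the unitary bookkeeping.
\item Your ordering also proves slightly more. For any nonzero linear functional $\omega$ on $M_{\alpha_i,\alpha_j}$, not just $a\mapsto\operatorname{Tr}(\xi\eta^*a)$ with unit vectors, you get $(\omega\otimes\mathrm{id})(\S_{ij})=\R_{ij}$. This follows by picking $a$ with $\omega(a)=1$.
\item The paper's version keeps (i) self-contained.
\item The paper's version also shows the independence directly as a consequence of the unitary symmetry of $\S_{ij}$.
\end{itemize}
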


\begin{proof}
(i) To see why $\R_{ij}$ is independent of the choice of rank one operators, we will show that for any rank one operator $ \xi \eta^* \in M_{\al_j,\al_i}$ where $ \eta \in \bC^{\al_i}$ and $ \xi\in \bC^{\al_j}$ are of norm one, we have 
\[
L_{\xi\eta^*}(\S_{ij})= L_{\eps_{11}^{(j,i)}}(\S_{ij}).
\]
Indeed, choose unitaries $ u^i \in M_{\al_i}$ and $u^j \in M_{\al_j}$ with 
\[
u^i (e_1 )=\eta \qand u^j (e_1)=\xi.
\]
Then, simple calculations show that
\[
L_{\xi\eta^*}(x)= L_{\eps_{11}^{(j,i)}}((u^i\otimes I)^*x(u^j\otimes I)) \foral x\in  M_{\alpha_i,\alpha_j}\otimes M_{n_i,n_j}.
\]
Since $\S_{i,j}$ is invariant under left and right multiplication with $ M_{\al_i}\otimes I_{n_i}$ and $M_{\al_j}\otimes I_{n_j}$, respectively, and $u^i, u^j$ are unitaries we conclude that 
\[
L_{\xi \eta^*}(\S_{ij})= L_{\eps_{11}^{(j,i)}}(((u^i)^*\otimes I_{n_i} )\S_{ij}(u^j \otimes I_{n_j}) )=L_{\eps_{11}^{(j,i)}}(\S_{ij}).
\]
\medskip

\noindent
(ii)
For fixed $k,\ell $ and $x=\sum_{k',\ell'}\eps^{(i,j)}_{k'\ell'}\otimes x_{k' \ell'}\in \S_{ij}$ we have
\[
(\eps^{(i,i)}_{1k}\otimes I_{n_i})x(\eps^{(j,j)}_{\ell 1}\otimes I_{n_j})
=\eps^{(i,j)}_{11}\otimes x_{k \ell}\in \S_{ij}.
\]
Applying $L_{\eps_{11}^{(j,i)}}$ yields
\[
x_{k \ell}=L_{\eps_{11}^{(j,i)}}(\eps^{(i,j)}_{11}\otimes x_{k \ell})
\in\R_{ij},
\]
and hence
\[
\S_{ij}\subseteq M_{\alpha_i,\alpha_j}\otimes\R_{ij}.
\]
Conversely, if $\eps^{(i,j)}_{k \ell} \otimes y\in M_{\al_i,\al_j} \otimes \R_{ij}$ choose $x\in \S_{ij}$ with $L_{\eps_{11}^{(j,i)}}(x)=y$.
Then
\[
\eps^{(i,j)}_{11}\otimes y
=
\eps^{(i,j)}_{11}\otimes L_{\eps_{11}^{(j,i)}}(x)
=
(\eps^{(i,i)}_{11}\otimes I_{n_i})x(\eps^{(j,j)}_{11}\otimes I_{n_j}) \in\S_{ij}.
\]
In particular,
\[
\eps^{(i,j)}_{k \ell}\otimes y=(\eps^{(i,i)}_{k1}\otimes I_{n_i})(\eps^{(i,j)}_{11}\otimes y)(\eps^{(j,j)}_{1 \ell}\otimes I_{n_j}) \in \S_{ij}.
\]
We conclude that 
$\S_{ij}
=
M_{\alpha_i,\alpha_j}\otimes\R_{ij}$, as required.
\end{proof}

With the notation above, set
\[
L:=\bigoplus_{i=1}^k \bC^{n_i},
\qquad
\C:=\bigoplus_{i=1}^k M_{n_i}\subseteq \B(L),
\qquad
\R:=\bigoplus_{i,j=1}^k \R_{ij}\subseteq \B(L).
\]
We will call $(\R,\C)$ the \emph{skeleton} of the quantum graph $(\S,\A)$.

\begin{proposition}\label{prop:reduced_qgraph}
The skeleton $(\R,\C)$ of an irreducibly acting quantum graph $(\S,\A)$ is a quantum graph and $(\S,\A)$ is a full pullback of $(\R,\C)$.
\end{proposition}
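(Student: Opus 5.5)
We need to show three things: $\R$ is an operator system, $\R$ is a $\C'$-bimodule, and there is a faithful unital $*$-homomorphism $\theta\colon\C\to\A$ which is a pullback, i.e. it has Kraus operators $v_i$ with $v_i^*\R v_j\subseteq\S$ and $v_i\S v_j^*\subseteq\R$.

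First, the operator system property. Since $\S=\S^*$ and $p_i$ are selfadjoint, $\S_{ij}^*=\S_{ji}$. Lemma \ref{lem:rij_structure} then gives $M_{\alpha_j,\alpha_i}\otimes\R_{ij}^*=M_{\alpha_j,\alpha_i}\otimes\R_{ji}$, so $\R_{ij}^*=\R_{ji}$ and $\R^*=\R$. Since $I_H=\sum_i p_i\in\S$, we have $I_{\alpha_i}\otimes I_{n_i}\in\S_{ii}$, and slicing gives $I_{n_i}\in\R_{ii}$. Hence $I_L\in\R$.

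Second, the bimodule property. Here $\C'=\bigoplus_i\bC I_{n_i}$, so it suffices that $\R$ is invariant under the central projections $q_i$ of $\C$; this holds because $\R=\bigoplus\R_{ij}$ is block-decomposed. The identity $\S=\bigoplus_{ij}\S_{ij}$ uses $\sum p_i=I$ and $p_i\S p_j\subseteq\S$.

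Third, the pullback map. Define $\theta\colon\C\to\B(H)$ by $\theta(\oplus_i c_i)=\oplus_i (I_{\alpha_i}\otimes c_i)$. This is a faithful unital $*$-homomorphism. To see that it lands in $\A$, note that $\A'\subseteq\A_\S$: $\A'\subseteq\S$ because $I\in\S$ and $\S$ is an $\A'$-bimodule, and $\A'\S\subseteq\S$, $\S\A'\subseteq\S$, with $\A'$ selfadjoint. Therefore $\A'\subseteq\bigoplus_i M_{\alpha_i}\otimes I_{n_i}$, and taking commutants gives $\A\supseteq\big(\bigoplus_i M_{\alpha_i}\otimes I_{n_i}\big)'=\bigoplus_i I_{\alpha_i}\otimes M_{n_i}=\theta(\C)$. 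This commutant computation is the step I expect to need the most care, since it silently uses that $\A_\S$ is realised on $H$ through the irreducibility assumption.

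For Kraus operators, let $e^{(i)}_r$ be the standard basis of $\bC^{\alpha_i}$ and set $v_{i,r}:=(e^{(i)}_r)^*\otimes I_{n_i}\colon H_i\to\bC^{n_i}$, extended by zero on $H\ominus H_i$ and composed with the inclusion into $L$. Then $\sum_{i,r}v_{i,r}^*c\,v_{i,r}=\oplus_i I_{\alpha_i}\otimes c_i=\theta(c)$. We check the two inclusions:
\[
v_{i,r}^*\,y\,v_{j,s}=\eps^{(i,j)}_{rs}\otimes y\in\S_{ij}\subseteq\S \quad\text{for } y\in\R_{ij},
\]
by Lemma \ref{lem:rij_structure}(ii), and blocks $\R_{i'j'}$ with $(i',j')\neq(i,j)$ are killed. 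Similarly,
\[
v_{i,r}\,x\,v_{j,s}^*=x_{rs}\in\R_{ij} \quad\text{for } x=\sum\eps^{(i,j)}_{k\ell}\otimes x_{k\ell}\in\S_{ij},
\]
again by Lemma \ref{lem:rij_structure}(ii), and $v_{i,r}\,p_{i'}\S p_{j'}\,v_{j,s}^*=0$ otherwise. So $\theta$ is a pullback, and faithfulness makes it full. Alternatively, fullness follows from Lemma \ref{L:injectivity}, since the $v_{i,r}$ map $H$ onto $L$.
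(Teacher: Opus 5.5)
Your proposal is correct and follows essentially the same route as the paper. The paper also gets selfadjointness and $I_L\in\R$ from the block decomposition of Lemma~\ref{lem:rij_structure}, uses $\A'\subseteq\A_\S$ to land the amplification $\theta$ in $\A$, and checks both pullback inclusions with the same Kraus operators $v_{i,r}$. You are slightly more careful than the paper on three points: you justify $\A'\subseteq\S\subseteq\ca(\S)$, you spell out the commutant step $\A=\A''\supseteq\A_\S'$, and you verify that the off-diagonal blocks are annihilated.
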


\begin{proof}
By Lemma~\ref{lem:rij_structure}, we have
\[
\S_{ij}=M_{\alpha_i,\alpha_j}\otimes \R_{ij}
\qquad \text{for all } i,j\in [k].
\]
We first show that $(\R,\C)$ is a quantum graph. 
We have that $\R$ is a selfadjoint subspace since $(\S_{ij})^*=\S_{ji}$, and it
contains $I_L$ as 
\[
L_{\eps_{11}^{(i,i)}}(I_{\alpha_i}\otimes I_{n_i})=I_{n_i}.
\]
Moreover, $\R$ is trivially a bimodule over $\C'=\bigoplus_i\mathbb C I_{n_i}$ and therefore
$(\R,\C)$ is a quantum graph. We now construct the pullbacks. Define $\theta \colon \C \to \A_{\S}'$ by
\[
\theta (\oplus_{i=1}^k x_i)
=
\oplus_{i=1}^k (I_{\alpha_i}\otimes x_i),
\]
and since $\S$ is an $\A'$-bimodule, by the definition of $\A_\S$ we obtain $ \A'\subseteq \A_{\S}$ and hence $\theta(\C)\subseteq \A$.
This is a unital $*$-homomorphism since it is the direct sum of the canonical
amplifications $x_i\mapsto I_{\alpha_i}\otimes x_i$ on each block. 

We will now obtain a Kraus representation for $\theta$. For each $i\in [k]$ and $r\in[\al_i]$ define the operators $ v_{i,r}: H \to L$ by 
\[
v_{i,r}:\begin{pmatrix}
\zeta_1\\
\vdots\\
\zeta_i\\
\vdots\\
\zeta_k
\end{pmatrix} \mapsto \begin{pmatrix}
0 \\
\vdots \\
(e_r^* \otimes I_{n_i})(\zeta_i)\\
\vdots\\
0
\end{pmatrix}
\]
with zeros in all but the $i$-th coordinate and $e_r^*(\xi)=\langle e_r,\xi \rangle$ for $ \xi \in \bC^{\alpha_i}$.   Thus  $v_{i,r} \colon H\to L$ vanishes off $H_i$ and  
\[
v_{i,r}(\sum_s e_s\otimes\xi_s)=\xi_r \quad  \text{for} \quad \sum_s e_s\otimes\xi_s\in H_i.
\]
In addition, we have
\[
v_{i,r}^* : \begin{pmatrix}
\xi_1\\
\vdots\\
\xi_i\\
\vdots\\
\xi_k
\end{pmatrix} \mapsto \begin{pmatrix}
0 \\
\vdots\\
e_r \otimes \xi_i\\
\vdots\\
0
\end{pmatrix}.
\]
Note that
\[
\sum_{r=1}^{\alpha_i} v_{i,r}^*v_{i,r}=p_i
\qand
\sum_{r=1}^{\alpha_i} v_{i,r}v_{i,r}^*=\alpha_i I_{L_i},
\]
and therefore
\[
\sum_{i,r} v_{i,r}^*v_{i,r}=I_H
\qand
\sum_{i,r} v_{i,r}v_{i,r}^*=\bigoplus_{i=1}^k \alpha_i I_{L_i}.
\]
Fix $x=\oplus_i x_i\in\C\subseteq \B(L)$.
For 
\[
\zeta= (\zeta_j)_{j=1}^k\in H \text{ with }  \zeta_j=\sum_{s}e_s\otimes\xi_s^{(j)}\in H_i,
\]
we have
\[
(v_{i,r}^*xv_{i,r})(\zeta)= v_{i,r}^* 
x_i \xi_r^{(i)} = 
e_r \otimes x_i \xi_r^{(i)},
\]
and hence
\[
\sum_{r=1}^{\alpha_i} (v_{i,r}^*xv_{i,r})(\zeta)
=
\sum_{r=1}^{\alpha_i} e_r\otimes x_i\xi_r^{(i)}
=
(I_{\alpha_i}\otimes x_i)\zeta.
\]
Summing over all $i\in [k]$ yields the Kraus form
\[
\theta_{}(x)=\sum_{i=1}^k\sum_{r=1}^{\alpha_i} v_{i,r}^*\,x\,v_{i,r}.
\]

To show the pullback relations, fix $i,j \in [k]$ and  let $y$ be $\R_{ij}$. We claim that
\[
v_{i,r}^*\,y\,v_{j,r'}=(\eps^{(i,j)}_{rr'}\otimes y),
\]
under the usual identification of $\B(H_j,H_i) \subseteq \B(H)$.
To verify this, it suffices to evaluate both sides on elementary tensors $e_s\otimes\eta\in H_j$
where $1\le s\le \alpha_j$ and $\eta\in L_j$. Indeed, we have
\[
v_{i,r}^*\,y\,v_{j,r'}(e_s\otimes\eta)
=
v_{i,r}^*(\delta_{r's}\,y\eta)
=
\delta_{r's}\,(e_r\otimes y\eta),
\]
and
\[
(\eps^{(i,j)}_{rr'}\otimes y)(e_s\otimes\eta)
=
(\eps^{(i,j)}_{rr'}e_s)\otimes y\eta
=
\delta_{r's}\,e_r\otimes y\eta.
\]
Since $y\in\R_{ij}$, we obtain
\[
v_{i,r}^*\,y\,v_{j,r'}=\eps^{(i,j)}_{rr'}\otimes y\in
M_{\alpha_i,\alpha_j}\otimes \R_{ij}=\S_{ij},
\]
showing that 
\[
v_{i,r}^*\R_{ij} v_{j,r'} \subseteq \S_{ij}.
\]
Conversely, if $x\in\S_{ij}$ write
\[
x=\sum_{\mu=1}^{\alpha_i}\sum_{\nu=1}^{\alpha_j} \eps^{(i,j)}_{\mu \nu}\otimes y_{\mu \nu}
\text{ for } y_{\mu\nu}\in\R_{ij}.
\]
Then for $\eta\in L_j$ we obtain
\[
v_{i,r}x v_{j,r'}^*(\eta)
=
v_{i,r}(x(e_{r'}\otimes\eta))
=
v_{i,r}(\sum_{\mu=1}^{\alpha_i} e_\mu\otimes y_{\mu r'}\eta)
=
y_{rr'}\eta,
\]
so
\[
v_{i,r} x v_{j,r'}^*=y_{rr'}\in\R_{ij}.
\]
Therefore, we have
\[
v_{i,r}^*\,\R\,v_{j,r'}\subseteq \S 
\qand
v_{i,r}\,\S\,v_{j,r'}^*\subseteq \R,
\]
and $(\S,\A)$ is a  pullback of $(\R,\C)$.
Finally $\theta$ is clearly injective and hence it is a full pullback.
\end{proof}

The following proposition shows that the skeleton of a graph operator system is the graph operator system of the skeleton of the graph, showing that the construction of the skeleton of a quantum graph is indeed a true-twin reduction analogue in the quantum graph setting.

\begin{proposition}\label{P:qskgraphs}
Let $\G$ be a graph on $n$ vertices and let $k$ be the number of vertices of its skeleton $\G^\circ$. Then the skeleton of the quantum graph $(\S_\G, D_n)$ is the quantum graph $(\S_{\G^\circ}, D_k)$.
\end{proposition}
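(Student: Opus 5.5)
The plan is to run the skeleton construction explicitly for $(\S_\G,D_n)$, using Proposition~\ref{P:irrgra} to know that $\S_\G$ acts irreducibly, and Proposition~\ref{P:twin} to identify $\A_{\S_\G}$ concretely inside $M_n$.

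First I will fix the data. By Proposition~\ref{P:twin}, $\A_{\S_\G}=\bigoplus_{j=1}^k M_{n_j}$, where $C_1,\dots,C_k$ are the true-twin classes, $n_j=|C_j|$, and each block sits on $\ell^2(C_j)$. So in the notation of Subsection~\ref{sec:quantumreduction} we have multiplicities $\alpha_j=n_j$ and all inner dimensions equal to $1$. That is, $H_j=\bC^{n_j}\otimes\bC\cong\ell^2(C_j)$ and $p_j=\sum_{x\in C_j}\eps_{xx}$. The unitary equivalence needed is only a permutation of the standard basis grouping the classes, and this permutation preserves $D_n$.

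Next I will compute $\S_{ij}=p_i\S_\G p_j=\spn\{\eps_{xy}:x\in C_i,\ y\in C_j,\ x\simeq_\G y\}$. Adjacency-or-equality is constant on pairs of classes: if $x\approx x'$ and $y\approx y'$, then $x\simeq y$ iff $y\in\N(x)=\N(x')$ iff $x'\simeq y$ iff $x'\in\N(y)=\N(y')$ iff $x'\simeq y'$. Hence $\S_{ij}$ is either all of $M_{n_i,n_j}\otimes\bC$ or $0$. It is the full block exactly when some (equivalently every) $x\in C_i$ and $y\in C_j$ satisfy $x\simeq_\G y$, which by definition of $\G^\circ$ means $[x]\simeq_{\G^\circ}[y]$. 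For $i=j$ it is always the full block, since $x\simeq x$.

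Applying the slice map then gives $\R_{ij}=\bC\subseteq M_{1,1}$ when $C_i\simeq_{\G^\circ}C_j$, and $\R_{ij}=0$ otherwise. Therefore $L=\bC^k$, $\C=\bigoplus_{i=1}^k M_1=D_k$, and $\R=\spn\{\eps_{ij}:[C_i]\simeq_{\G^\circ}[C_j]\}=\S_{\G^\circ}$, with the vertices of $\G^\circ$ labelled by $[k]$.

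The only delicate point is bookkeeping. I need to check that the well-definedness of adjacency in $\G^\circ$ (its edges are independent of representatives) matches the block computation above, and that the identification $H\cong\bigoplus_j\bC^{n_j}\otimes\bC$ is the one used to define the skeleton. Both follow from the twin-invariance argument.
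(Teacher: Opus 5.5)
Your proposal is correct and follows essentially the same route as the paper. You invoke Proposition~\ref{P:irrgra} and Proposition~\ref{P:twin} to get block sizes $\alpha_i=|C_i|$ with one-dimensional multiplicity spaces, show each block $p_i\S_\G p_j$ is either full or zero by twin-invariance of closed neighbourhoods, and then slice to obtain $\R=\S_{\G^\circ}$ with $\C=D_k$. Your block condition via $\simeq_{\G^\circ}$, rather than the paper's adjacency condition, is in fact slightly more careful, since it also covers the diagonal blocks $i=j$.
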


\begin{proof}
By Proposition \ref{P:irrgra} we have that $\S_\G$ is irreducibly acting. Let $(\R,\C)$ be the skeleton of $(\S_\G,D_n)$. By Proposition \ref{P:twin} and the construction of $(\R,\C)$ we obtain
\[
\A_{\S_{\G}}
=\bigoplus_{i=1}^k M_{\alpha_i}, 
\quad 
L=\bigoplus_{i=1}^k \mathbb C
\qand 
\C=\bigoplus_{i=1}^k M_1\cong D_k \subseteq \B(L),
\]
where the number of blocks $k=V(\G^\circ)$ is exactly the number of true-twin equivalence classes $C_1,\dots,C_k$ of $\G$ and $\al_i= |C_i|$ for all $i\in[k]$. Moreover, we have 
\[
p_i=I_{\al_i}=\sum_{x\in C_i} \eps_{xx} \foral i\in[k],
\]
and since vertices within a true-twin equivalence class have identical neighborhoods
\[
(\S_\G)_{ij}
=
p_i \S_\G p_j
=
\begin{cases}
M_{\alpha_i,\alpha_j} & \text{if } C_i\sim_{\G^\circ}C_j,\\
(0) & \text{otherwise.}
\end{cases}
\]
In particular, we get
\[
\R_{ij}
=
L_{\varepsilon^{(j,i)}_{11}}(\S_{ij})
=
\begin{cases}
\bC & \text{if } C_i\sim_{\G^\circ}C_j,\\
(0) & \text{otherwise,}
\end{cases}
\]
and therefore
\[
\R
=
\bigoplus_{i,j}\R_{ij}
\subseteq \B(L)\cong M_k,
\]
is precisely the operator system spanned by the matrix units $\eps_{ij}$
corresponding to adjacency in the skeleton graph $\G^\circ$.
Thus $\R=\S_{\G^\circ}$.
\end{proof}

\subsection{TRO-equivalence via pullbacks}
In this subsection we state and prove one of the main result of this paper. We also present examples of TRO-equivalent quantum graphs and demonstrate how our result extends \cite[Corollary~7.6]{EKT21} from the classical graph setting to the quantum graph framework.

\begin{theorem} \label{th:stronghom}
Let $(\S,\A)$ and $(\T, \B)$ be two quantum graphs acting irreducibly on Hilbert spaces $H$ and $K$, respectively. The following are equivalent:
\begin{enumerate}
\item $\S \sim_{\rm TRO} \T$.
\item $\S \sim_{\De} \T$.
\item There exists a quantum graph $(\R,\C)$ so that  $(\S, \A)$ and $(\T, \B)$ are full pullbacks of $\R$.
\item There exists a C*-algebra $\C\subseteq\B(L)$ on a finite-dimensional Hilbert space $L$ and unital $*$-homomorphisms $ \theta\colon\C\to \A $ and $ \varrho\colon \C\to \B$ such that 
\[
\T^{ \rightarrow\varrho}= \S^{\rightarrow \theta}, \quad \T= (\S^{\rightarrow \theta})^{\leftarrow \varrho} \qand \S= (\T^{\rightarrow \varrho})^{\leftarrow\theta}.
\]
\end{enumerate}
\end{theorem}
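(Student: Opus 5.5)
The plan is to prove the cycle (i)$\Leftrightarrow$(ii), (iii)$\Leftrightarrow$(iv), (iii)$\Rightarrow$(i) and (i)$\Rightarrow$(iii). The equivalence (i)$\Leftrightarrow$(ii) for irreducibly acting operator systems is \cite[Theorem 7.5]{EKT21}, so nothing needs to be done there.

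For (iii)$\Leftrightarrow$(iv) I would apply Theorem~\ref{thm:ucppullback} twice. For a full pullback $\theta\colon\C\to\A$ it gives $\S=\R^{\leftarrow\theta}$ and $\R=\S^{\rightarrow\theta}$, and similarly $\T=\R^{\leftarrow\varrho}$ and $\R=\T^{\rightarrow\varrho}$. Substituting $\R$ yields the three identities in (iv). Conversely, given (iv), set $\R:=\S^{\rightarrow\theta}=\T^{\rightarrow\varrho}$, a $\C'$-bimodule. It is selfadjoint and contains $I_L$ because $\sum v_iv_i^*$ generates a unit after taking the $\C'$-bimodule span; if needed, I would replace $L$ by $[\X_\theta H]$ and cut down. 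The identities $\S=\R^{\leftarrow\theta}$ and $\R=\S^{\rightarrow\theta}$, together with the analogous ones for $\T$, then let Theorem~\ref{thm:ucppullback} conclude that $\theta$ and $\varrho$ are full strong co-homomorphisms, hence full pullbacks. The fullness bookkeeping, meaning that faithfulness corresponds to $[\X H]=L$, is handled by Lemma~\ref{L:injectivity}.

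For (iii)$\Rightarrow$(i), let $\theta\colon\C\to\A$ and $\varrho\colon\C\to\B$ be full pullbacks with Kraus spaces $\X_\theta\subseteq\B(H,L)$ and $\X_\varrho\subseteq\B(K,L)$. Put $\X:=\spn\{\X_\varrho^*\X_\theta\}\subseteq\B(H,K)$. The key computation is
\[
\X\S\X^*\subseteq \X_\varrho^*(\X_\theta\S\X_\theta^*)\X_\varrho\subseteq \X_\varrho^*\R\X_\varrho\subseteq\T,
\]
and symmetrically $\X^*\T\X\subseteq\S$. Non-degeneracy of the action of $\X$ follows from $[\X_\theta H]=L$ (fullness), $[\X_\varrho^*L]=K$ (unitality of $\varrho$), and the symmetric statements. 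Proposition~\ref{prop:nondeg_TRO} then produces a non-degenerate TRO implementing $\S\sim_{\rm TRO}\T$.

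For (i)$\Rightarrow$(iii), which I expect to be the main obstacle, I would use skeletons. By Proposition~\ref{prop:reduced_qgraph}, $(\S,\A)$ and $(\T,\B)$ are full pullbacks of their skeletons $(\R_\S,\C_\S)$ and $(\R_\T,\C_\T)$, so it suffices to show these skeletons are unitarily equivalent. The resulting unitary $W\colon L_\S\to L_\T$ then conjugates $\C_\S$ onto $\C_\T$ and $\R_\S$ onto $\R_\T$, and composing $\varrho$ with $\operatorname{ad}W$ exhibits both as full pullbacks of one quantum graph.

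To get the unitary equivalence, Proposition~\ref{P:multeq} supplies a TRO $\N$ with $[\N^*\N]=\A_\S$ and $[\N\N^*]=\A_\T$. This TRO induces a $*$-isomorphism between the centres, matching the blocks $p_i$ of $\A_\S$ with blocks $q_{\sigma(i)}$ of $\A_\T$. Each corner $q_{\sigma(i)}\N p_i$ is then a TRO between $M_{\alpha_i}\otimes I_{n_i}$ and $M_{\beta_{\sigma(i)}}\otimes I_{m_{\sigma(i)}}$. I expect to need $n_i=m_{\sigma(i)}$, because the commutants $I\otimes M_{n_i}$ and $I\otimes M_{m_{\sigma(i)}}$ must be isomorphic through Eleftherakis' $\pi$. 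Lemma~\ref{L:unitary} then gives $q_{\sigma(i)}\N p_i=M_{\beta,\alpha}\otimes u_i$ with $u_i\in M_{n_i}$ unitary. Finally, from $[\N\S\N^*]=\T$ and Lemma~\ref{lem:rij_structure}, slicing gives $u_i\R_{ij}u_j^*=(\R_\T)_{\sigma(i)\sigma(j)}$. So $W:=\oplus_i u_i$, composed with the permutation $\sigma$, is the required unitary equivalence, and it maps $\C_\S$ onto $\C_\T$ as well. The delicate points are the block matching and the check that the corners of $\N$ are exactly the full TROs between single blocks, so that Lemma~\ref{L:unitary} applies.
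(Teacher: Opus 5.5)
Your proposal is correct and follows essentially the same route as the paper. Both arguments take (i)$\Leftrightarrow$(ii) from the citation, prove (iii)$\Rightarrow$(i) with $\X=[\X_\varrho^*\X_\theta]$ and Proposition~\ref{prop:nondeg_TRO}, and prove (i)$\Rightarrow$(iii) via Proposition~\ref{P:multeq}, block matching through \cite[Theorem 3.2]{Ele12}, Lemma~\ref{L:unitary} on the corners, and the unitary $\oplus_i u_i$ carrying one skeleton onto the other. Your handling of (iv)$\Rightarrow$(iii) is slightly more careful than the paper's one-line appeal to Theorem~\ref{thm:ucppullback}: the unit of $\S^{\rightarrow\theta}=\T^{\rightarrow\varrho}$ is the projection onto $[\X_\theta H]=[\X_\varrho K]$, which equals $I_L$ only when $\theta$ and $\varrho$ are faithful, so cutting $L$ down to this common range is exactly the right fix.
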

\begin{proof}
\noindent
[(i) $\Leftrightarrow $ (ii)]. It follows from \cite[Theorem 7.5]{EKT21}.

\medskip
\noindent
[(iii) $\Leftrightarrow$ (iv)]. It follows from Theorem \ref{thm:ucppullback}.

\medskip 
\noindent[(iii) $\Rightarrow$ (i)]. Suppose $\theta\colon \C\rightarrow \A$ and $\varrho \colon \C\rightarrow \B$ are unital $*$-homomorphisms with Kraus operators $\{v_{i}\}_{i=1}^{r} \subseteq \B(H, L)$ and $\{w_{j}\}_{j=1}^{s} \subseteq \B(K, L)$ implementing the full pullbacks of $\S$ and $\T$ with $\R$, respectively. We may choose Kraus operators such that
\[
\X_{\theta} = \spn\{v_{i}:  i = 1, \hdots, r\} \subseteq \B(H, L) \text{ and } \X_{\varrho} = \spn\{w_{j}:\; j = 1, \hdots, s\} \subseteq \B(K, L).
\]
Using the fact that both pullbacks are full, Lemma \ref{L:injectivity} implies that
\[
[\X_{\theta} H] = L = [\X_{\varrho} K].
\]
Set 
\[
\X := [\X_{\varrho}^{*} \X_{\theta}] = \spn\{w_{j}^{*}v_{i}:  i \in [r], j \in [s]\} \subseteq \B(H, K).
\]
As $\theta$ and $\varrho$ are unital we obtain 
\[
[\X_{\theta}^{*} L] \supseteq [\X_{\theta}^{*}\X_{\theta} H] = H \qand [\X_\varrho^{*}L] \supseteq [\X_\varrho^{*}\X_\varrho K] = K,
\]
and therefore 
\[
[\X^{*}K] = [\X_{\theta}^{*} \X_{\varrho} K] = [\X_{\theta}^{*} L] = H
\qand
[\X H] = [\X_{\varrho}^{*}\X_{\theta} H] = [\X_{\varrho}^{*}L] = K,
\]
showing that $\X$ acts non-degenerately. 
We have
\begin{align*}
\X\S\X^{*}= \X_\varrho^* \X_\theta \S \X_\theta^* \X_\varrho 
&\subseteq 
\spn\{  w_i^* v_j  \S  v_k^*  w_\ell  : j,k \in [r] \text{ and } i,\ell \in[s]\}\\
&\subseteq
\spn\{  w_i^*  \R  w_\ell  : i,\ell \in[s]\}\subseteq \T.
\end{align*}
A similar argument shows that
\[
\X^{*}\T\X \subseteq \S.
\]
Applying Proposition \ref{prop:nondeg_TRO} yields that $\S \sim_{\rm TRO} \T$, as required. 

\medskip
\noindent
[(i) $\Rightarrow$ (iii)].
Suppose that $\S$ and $\T$ are TRO-equivalent. Since both $\S$ and $\T$ act irreducibly, by Proposition \ref{P:multeq} we may assume that
\[
\S=[\M^* \T\M] \qand \T=[\M \S \M^*],
\]
via a non-degenerate TRO $\M$ satisfying
\[
\A_{\S}=[\M^*\M]
\qand
\A_{\T}=[\M\M^*].
\]
By \cite[Theorem 3.2]{Ele12} we have 
\[
\A_{\S}'\cong \A_{\T}',
\]
and therefore up to unitary equivalence we may write
\[
\A_{\S}
=
\bigoplus_{i=1}^k (M_{\alpha_i}\otimes I_{n_i})
\subseteq \B(H)
\qand
\A_{\T}
=
\bigoplus_{i=1}^k (M_{\beta_i}\otimes I_{n_i})
\subseteq \B(K),
\]
with
\[
H=\bigoplus_{i=1}^k (\bC^{\alpha_i}\otimes \bC^{n_i})
\qand
K=\bigoplus_{i=1}^k (\bC^{\beta_i}\otimes \bC^{n_i}).
\]
Let $(\R^{\S},\C)$ and $(\R^{\T},\C)$ be the skeletons of the quantum graphs $(\S,\A)$ and $(\T,\B)$. Note that $(\R^{\S},\C)$ and $(\R^{\T},\C)$ are quantum graphs on the same algebra due to the decompositions of $H$ and $K$. By Proposition~\ref{prop:reduced_qgraph} we have that $(\S,\A_{}^{})$ and $(\T,\B)$ are full pullbacks of $(\R^{\S},\C)$ and $(\R^{\T},\C)$ via the maps $\theta'$ and $\varrho$, respectively. We will prove that $\R^{\S}$ and $\R^{\T}$ are unitarily equivalent through a unitary conjugation that preserves $\C$, and from that we will derive that $(\S,\A_{}^{})$ and $(\T,\B)$ are full pullbacks of $(\R^{\T},\C)$.

Let $p_i$ and $q_i$ be the minimal central projections corresponding to the above block decompositions. Since the $*$-isomorphism $\A_{\T}'\cong \A_{\S}'$
implemented by \cite[Theorem 3.2]{Ele12} preserves minimal central projections, and hence after permuting blocks we may assume that
\[
q_i x= x p_i \foral x\in \M \text{ and } i=1,\dots,k.    
\]
Therefore, we obtain
\[
\T_{ij}=q_i \T q_j=q_i[\M\S\M^*]q_j
= 
[q_i \M p_i \S p_j\M^* q_j] = [(q_i \M p_i) \S_{ij}(p_j\M^* q_j)].
\]
Moreover, we have
\[
[(q_i\M p_i)^*(q_i\M p_i)]
=[p_i\M^*\M p_i]
=[p_i\A_{\S}p_i]
=M_{\alpha_i}\otimes I_{n_i},
\]
and similarly
\[
[(q_i\M p_i)(q_i\M p_i)^*]
=
[q_i \A_\T q_i]
=
M_{\beta_i}\otimes I_{n_i}.
\]
In particular, each $q_i\M p_i$ is a  TRO implementing Morita equivalence between the C*-algebras
$M_{\alpha_i} \otimes I_{n_i}$ and $M_{\beta_i}\otimes I_{n_i}$. By Lemma \ref{L:unitary} there exists a unitary $u_i \in M_{n_i}$ such that
\[
q_i\M p_i
=
M_{\beta_i,\alpha_i}\otimes u_i.
\]
Hence, we obtain
\begin{align*}
\T_{ij}&= 
\big[(q_i\M p_i)\,\S_{ij}\,(p_j\M^*q_j)\big]\\
&= [(M_{\beta_i,\alpha_i}\otimes u_{i})(M_{\alpha_i,\alpha_j} \otimes \R_{ij}^{\S})(M_{\alpha_j,\beta_j}\otimes u_{j}^*) ]\\
&=M_{\beta_i,\beta_j} \otimes (u_i\R_{ij}^{\S}u_j^*), 
\end{align*}
which implies that
\[
M_{\beta_i,\beta_j}\otimes\R_{ij}^{\T}=\T_{ij}
=M_{\beta_i,\beta_j}\otimes (u_i\R_{ij}^{\S}u_j^*),
\]
and therefore
\[u_i\R_{ij}^{\S}u_j^*
=\R_{ij}^{\T}.
\]
By seting $u= \oplus_{i=1}^k u_i\in \bigoplus_{i=1}^k M_{n_i}$ we get
\[
u\R^{\S}u^*=u(\bigoplus_{i,j}\R_{ij}^{\S})u^*= \bigoplus_{i,j} (u_i\R_{ij}^{\T}u_j^*)
=\bigoplus_{i,j}\R^{\T}_{ij}= \R^{\T}.
\] 
Moreover, we have 
\[
u \C u^*= \bigoplus_i u_i M_{n_i} u_i^*= \bigoplus_i  M_{n_i}= \C.
\]
In particular, $(\R^{\S},\C)$ is a full pullback of $(\R^{\T},\C)$ through the map
\[
\ad{u^*} \colon \C \to \C ; x\mapsto u^*xu.
\]
We conclude that  $(\S,\A)$ is a full pullback of $(\R^{\T},\C)$ through the map $\theta:=\theta'\circ\ad{u^*}$ and $(\T,\B)$ is a full pullback of $(\R^{\T},\C)$ through the map $\varrho$, and the proof is complete.
\end{proof}

\begin{remark}
In \cite[Theorem 2.7]{Wea12} it is shown that the notion of a quantum relation is independent of the Hilbert space on which the underlying von Neumann algebra acts. Some of the results of \cite{Wea12} are revisited in \cite{Daws24} by developing alternative methods using Stinespring dilations.

Let $(\S,\A)$ be a quantum graph acting on a Hilbert space $H$ and let $\theta \colon \A \to \B(K)$ be a faithful unital $*$-representation on a finite-dimensional Hilbert space $K$. Write $\theta(x)=\sum_{i=1}^r v_i^*xv_i$, and let $V$ be the corresponding column isometry. Set
\[
\S_\theta:=V^*(\S\otimes M_r)V.
\]
Then, by \cite[Theorem 7.5]{Daws24}, we have $\S_\theta=\S^{\leftarrow\theta}$,
and $\S_\theta$ is a $\theta(\A)'$-bimodule. Hence, by Theorem~\ref{th:stronghom} and Theorem~\ref{thm:ucppullback}, the quantum graphs $\S$ and $\S_\theta$ are TRO-equivalent. In particular, the correspondence 
\[
\S\mapsto \S_\theta
\]
of \cite[Lemma 7.4]{Daws24} preserves and reflects TRO-equivalence, and thus induces a well-defined map on TRO-equivalence classes.

In particular, if $\pi\colon \A \to \B(H\otimes L)$ is given by $\pi(x)=x\otimes I_L$, then by \cite[Lemma 7.3]{Daws24} the correspondence $\S \mapsto\S \otimes \B(L)$ is a bijection between quantum graphs on $\A$ and on $\pi(\A)$, and since $\S \sim_{\mathrm{TRO}} \S\otimes \B(L)$, it follows that this correspondence  induces a bijection on TRO-equivalence classes.
\end{remark}

\begin{example}
Consider
\[
\C := M_2 \oplus M_3 \subseteq \B(\bC^2 \oplus \bC^3)
\qand
\R :=
\begin{pmatrix}
\bC I_2 & M_{2,3}\\[2mm]
M_{3,2} & \bC I_3
\end{pmatrix}
\subseteq \B(\bC^2 \oplus \bC^3).
\]
Define also
\[
\A := (I_2\otimes M_2)\oplus M_3
\subseteq \B((\bC^2\otimes \bC^2)\oplus \bC^3)
\qand
\S:=
\begin{pmatrix}
M_2\otimes I_2 & M_{2,1}\otimes M_{2,3}\\[2mm]
M_{1,2}\otimes M_{3,2} & \bC I_3
\end{pmatrix},
\]
and
\[
\B := (I_3\otimes M_2)\oplus (I_2\otimes M_3)
\subseteq \B((\bC^3\otimes \bC^2)\oplus (\bC^2\otimes \bC^3))
\qand
\T:=
\begin{pmatrix}
M_3\otimes I_2 & M_{3,2}\otimes M_{2,3}\\[2mm]
M_{2,3}\otimes M_{3,2} & M_2\otimes I_3
\end{pmatrix}.
\]
It can easily be verified that $(\R,\C)$, $(\S,\A)$ and $(\T,\B)$ are quantum graphs, and that $\S$ and $\T$ are also irreducibly acting as  $ \ca(\S)= M_7$ and $\ca(\T)=M_{12}$. Moreover, both $(\S,\A)$ and $(\T,\B)$ are full pullbacks of $(\R,\C)$ along the faithful unital $*$-homomorphisms
\[
\theta \colon \C\to \A,
\qquad
\theta(a\oplus b)=(I_2\otimes a)\oplus b,
\]
and
\[
\varrho \colon \C\to \B,
\qquad
\varrho(a\oplus b)=(I_3\otimes a)\oplus (I_2\otimes b).
\]
Hence
\[
\S=\R^{\leftarrow\theta}
\qand
\T=\R^{\leftarrow\varrho},
\]
and therefore Theorem~\ref{th:stronghom} yields that
$\S\sim_{\rm TRO}\T$.
\end{example}

As a consequence we obtain the characterisation of TRO-equivalence of graph operator systems in terms of classical pullback maps proved in \cite[Corollary 7.6]{EKT21}.

\begin{corollary}\cite[Corollary 7.6]{EKT21}\label{cor:classical}
Let $\G$ and $\H$ be graphs and let 
$\S_{\G} \subseteq M_n$ and 
$\S_{\H} \subseteq M_m$ 
be their graph operator systems.  
The following are equivalent:
\begin{enumerate}
\item $\S_{\G} \sim_{\rm TRO} \S_{\H}$.
\item The skeletons $\G^\circ$ and $\H^\circ$ are isomorphic. Equivalently, the graphs $\G$ and $\H$ are pullbacks of isomorphic graphs. 
\end{enumerate}
\end{corollary}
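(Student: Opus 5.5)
The plan is to specialise Theorem \ref{th:stronghom} to the classical setting $(\S_\G,D_n)$ and $(\S_\H,D_m)$, and to combine it with Propositions \ref{P:qskgraphs} and \ref{P:Redgr}. By Proposition \ref{P:irrgra}, both graph operator systems act irreducibly. By the example following the definition of quantum graphs, $(\S_\G,D_n)$ and $(\S_\H,D_m)$ are quantum graphs. Hence Theorem \ref{th:stronghom} applies, and the equivalence in (ii) between skeleton isomorphism and being full pullbacks of isomorphic graphs is Proposition \ref{P:Redgr}. It therefore suffices to prove (i) $\Leftrightarrow$ ($\G^\circ\cong\H^\circ$).

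For (ii) $\Rightarrow$ (i), fix a graph isomorphism $g\colon V(\G^\circ)\to V(\H^\circ)$ and let $f_\G,f_\H$ be the canonical full pullbacks onto the skeletons. Then $g\circ f_\G$ and $f_\H$ are full pullbacks of $\G$ and $\H$ onto the same graph $\H^\circ$. Proposition \ref{prop:classical-weaver-pullback} turns these into injective unital $*$-homomorphisms $\theta,\varrho\colon D_k\to D_n,D_m$ with $\S_\G=\S_{\H^\circ}^{\leftarrow\theta}$ and $\S_\H=\S_{\H^\circ}^{\leftarrow\varrho}$. The last statement of Theorem \ref{thm:ucppullback} shows that $\theta$ and $\varrho$ are pullbacks, and injectivity makes them full. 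Thus condition (iii) of Theorem \ref{th:stronghom} holds with $(\R,\C)=(\S_{\H^\circ},D_k)$, and this gives $\S_\G\sim_{\rm TRO}\S_\H$.

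For (i) $\Rightarrow$ (ii), I would follow the proof of [(i) $\Rightarrow$ (iii)] in Theorem \ref{th:stronghom} rather than quote only its statement, since that proof gives more. There, with $\M$ chosen as in Proposition \ref{P:multeq}, the skeletons $(\R^{\S},\C)$ and $(\R^{\T},\C)$ are shown to satisfy $u\R^{\S}u^*=\R^{\T}$ with $u=\oplus_i u_i$ block diagonal, after the blocks have been permuted to match. By Proposition \ref{P:twin}, $\A_{\S_\G}=\oplus M_{\al_i}$, so every multiplicity $n_i$ equals $1$. Then $\C=D_k$, each $u_i$ is a scalar of modulus one, and the block matching is a bijection $\sigma$ between true-twin classes. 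By Proposition \ref{P:qskgraphs}, $\R^{\S}=\S_{\G^\circ}$ and $\R^{\T}=\S_{\H^\circ}$. The relation $u_i\R^{\S}_{ij}\bar u_j=\R^{\T}_{\sigma(i)\sigma(j)}$ therefore says that $\R^{\S}_{ij}\neq0$ if and only if $\R^{\T}_{\sigma(i)\sigma(j)}\neq0$, which means $\sigma$ is an isomorphism $\G^\circ\cong\H^\circ$.

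The main obstacle is bookkeeping in this last step. The proof of Theorem \ref{th:stronghom} silently permutes blocks, using that the $*$-isomorphism $\A_\S'\cong\A_\T'$ preserves minimal central projections. I would have to check that the count of blocks $k$ agrees for both graphs, that the permutation really carries $p_i$ to $q_{\sigma(i)}$, and that nothing else is lost. An alternative route, if this gets messy, is to use only (iii) of Theorem \ref{th:stronghom} with some common $(\R,\C)$. Then one would need $\C$ to be abelian, so that Proposition \ref{prop:classical-weaver-pullback} yields classical full pullbacks, and Proposition \ref{P:Redgr} finishes the argument. However, $\C$ need not be abelian for an arbitrary $(\R,\C)$, which is exactly why I prefer the explicit-skeleton route.
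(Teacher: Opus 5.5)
Your proof is correct. The direction (ii)$\Rightarrow$(i) is exactly the paper's argument: Proposition~\ref{P:irrgra} gives irreducibility, Proposition~\ref{prop:classical-weaver-pullback} together with the last part of Theorem~\ref{thm:ucppullback} turns the classical full pullbacks into quantum full pullbacks, and Theorem~\ref{th:stronghom} finishes.

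For (i)$\Rightarrow$(ii) the two routes differ. The paper quotes only condition (iii) of Theorem~\ref{th:stronghom} and then applies Proposition~\ref{prop:classical-weaver-pullback} and Proposition~\ref{P:Redgr}. This tacitly uses that the common target can be taken to be the skeleton of $(\S_\H,D_m)$. By Proposition~\ref{P:qskgraphs} that skeleton is $(\S_{\H^\circ},D_k)$, so $\G$ and $\H$ become full pullbacks of the single graph $\H^\circ$, and Proposition~\ref{P:Redgr} gives $\G^\circ\cong(\H^\circ)^\circ\cong\H^\circ$. You instead open the proof and read the isomorphism directly off the bijection $p_i\mapsto q_{\sigma(i)}$ of minimal central projections induced by the TRO $\M$ of Proposition~\ref{P:multeq}. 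This bypasses the translation back to classical pullbacks in that direction. Your version makes explicit the dependence on the particular $(\R,\C)$ built in the proof, which the paper's one-line citation leaves implicit, and it produces a concrete isomorphism. The paper's version stays at the level of the theorem's statement and lands directly in the ``full pullbacks of a common graph'' formulation.

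The bookkeeping you worry about is harmless. Since $\A_{\S_\G}'\cong\bC^{k}$ and $\A_{\S_\H}'\cong\bC^{k'}$, the $*$-isomorphism $\pi$ forces $k=k'$ and sends minimal projections to minimal projections. Then $\M=\{x: xa'=\pi(a')x\}$ gives $q_{\sigma(i)}x=xp_i$.

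One correction to your closing remark. In condition (iii) the algebra $\C$ is automatically abelian, because a full pullback $\theta\colon\C\to D_n$ is an injective $*$-homomorphism, so $\C\cong\theta(\C)\subseteq D_n$. The real obstruction to applying Proposition~\ref{prop:classical-weaver-pullback} verbatim is that $\C\subseteq\B(L)$ may act with multiplicity. In that case $\C'$ is non-abelian and $\R$ is only a block blow-up $\bigoplus_{i\simeq_{\K} j}M_{m_i,m_j}$ of a graph operator system $\S_\K$. That route can be completed by passing to $\K$ and composing pullbacks, but your chosen route avoids it.
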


\begin{proof}
By Proposition \ref{P:Redgr} we have that $\G$ and $\H$ are pullbacks of isomorphic graphs if and only if their skeletons $\G^\circ$ and $\H^\circ$ are isomorphic. By Proposition \ref{P:irrgra} we get that $\S_\G$ and $\S_\H$ are irreducibly acting and hence we may invoke Theorem \ref{th:stronghom}. A use of Proposition \ref{prop:classical-weaver-pullback} then completes the proof.
\end{proof}

\subsection{TRO-equivalence of quantum graphs and their associated algebras}
In this subsection we investigate a simultaneous TRO-equivalence between the quantum graphs and their associated algebras. This provides a stronger notion of Morita equivalence.

\begin{theorem}\label{T:qgraphs_balanced}
Let $(\S,\A)$ and $(\T,\B)$ be quantum graphs acting on Hilbert spaces $H$ and $K$, respectively. The following are equivalent:
\begin{enumerate}
\item There is a non-degenerate TRO $\M\subseteq \B(H,K)$ which is also a $\B'$-$\A'$-bimodule implementing the TRO-equivalences
\[
\S \sim_{\rm TRO } \T \qand \A \sim_{\rm TRO } \B.
\]

\item There exists a faithful unital completely positive map $\vphi\colon\B\to \A$ such that
\[
\X_\vphi^*\T \X_\vphi\subseteq \S,\quad \X_\vphi \S \X_\vphi^*\subseteq \T 
\qand
\X_\vphi^*\B \X_\vphi\subseteq \A,\quad \X_\vphi \A \X_\vphi^*\subseteq \B.
\]
\end{enumerate}
\end{theorem}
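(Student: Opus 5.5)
The plan is to prove each implication by building the TRO from $\X_\vphi$, or $\vphi$ from the TRO, and checking that one object handles the operator systems and the algebras at once.

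\emph{(ii)$\Rightarrow$(i).} Since $\vphi$ is faithful, Lemma~\ref{L:injectivity} gives $[\X_\vphi H]=K$. Unitality gives $\sum_i v_i^*v_i=I_H$ for Kraus operators spanning $\X_\vphi$, hence $[\X_\vphi^*K]=H$, so $\X_\vphi$ acts non-degenerately. Apply Proposition~\ref{prop:nondeg_TRO} to $\X:=\X_\vphi$ twice: once to the pair of operator systems $\S,\T$, and once to the pair $\A,\B$, which are operator systems in their own right. The TRO produced there is $\M:=[\X\,\ca(\X^*\X)]$, which depends only on $\X$. So the same non-degenerate TRO $\M$ implements both $\S\sim_{\rm TRO}\T$ and $\A\sim_{\rm TRO}\B$.

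It remains to check that $\M$ is a $\B'$-$\A'$-bimodule.
\begin{itemize}
\item Left invariance under $\B'$ is immediate, because $\X_\vphi$ is a left $\B'$-module.
\item For the right action, note that $I_K\in\B$ and $\X^*\B\X\subseteq\A$ give $\X^*\X\subseteq\A$. Hence $\ca(\X^*\X)\subseteq\A$ commutes with $\A'$.
\item So for $x\in\X$, $c\in\ca(\X^*\X)$ and $a'\in\A'$ we get $xca'=(xa')c\in\X\,\ca(\X^*\X)$, using that $\X$ is a right $\A'$-module.
\end{itemize}

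\emph{(i)$\Rightarrow$(ii).} Let $\M$ be as in (i). Everything is finite-dimensional, so pick a spanning family $m_1,\dots,m_r$ of $\M$ and set $a:=\sum_i m_i^*m_i\in[\M^*\M]$.

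The key step, and the one I expect to need most care, is normalising this family into Kraus operators without leaving $\M$.
\begin{itemize}
\item The element $a$ is positive. Its kernel consists of the $\xi$ with $m_i\xi=0$ for all $i$, i.e.\ $\M\xi=0$, which is orthogonal to $[\M^*K]=H$. So $a$ is invertible.
\item Since $[\M^*\M]$ is a unital C*-algebra, $a^{-1/2}\in[\M^*\M]$.
\item Set $v_i:=m_ia^{-1/2}$. Then $v_i\in[\M\M^*\M]\subseteq\M$ and $\sum_iv_i^*v_i=I_H$.
\item Since right multiplication by $a^{-1/2}$ is invertible, $\spn\{v_i\}=\M a^{-1/2}=\M$.
\end{itemize}

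Now define $\vphi(b):=\sum_iv_i^*bv_i$. This is unital and completely positive, and it maps $\B$ into $\A$ because $\M^*\B\M\subseteq\A$. Since $\M$ is a $\B'$-$\A'$-bimodule, $\X_\vphi=[\B'\M\A']=\M$. The four inclusions in (ii) are then exactly the two TRO-equivalences in (i). Finally, $[\X_\vphi H]=[\M H]=K$ by non-degeneracy, so Lemma~\ref{L:injectivity}(ii) shows that $\vphi$ is faithful.
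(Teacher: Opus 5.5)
Your proposal is correct and follows essentially the same route as the paper. For (i)$\Rightarrow$(ii) you normalise a spanning family of $\M$ by $(\sum_i m_i^*m_i)^{-1/2}\in[\M^*\M]$, which gives Kraus operators spanning $\M=\X_\vphi$; for (ii)$\Rightarrow$(i) you apply Proposition~\ref{prop:nondeg_TRO} to $\X_\vphi$ for both pairs, and your explicit check that $[\X_\vphi\,\ca(\X_\vphi^*\X_\vphi)]$ is a $\B'$-$\A'$-bimodule (via $\X_\vphi^*\X_\vphi\subseteq\A$) supplies a step the paper only asserts.
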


\begin{proof}
\noindent
[(i) $\Rightarrow$ (ii)].
Assume that there exists a non-degenerate TRO $\M\subseteq \B(H,K)$ such that
\[
\M^* \T \M \subseteq \S,\quad \M \S \M^* \subseteq \T,
\qand
\M^* \B \M \subseteq \A,\quad \M \A \M^* \subseteq \B.
\]
Since $\B(H,K)$ is finite-dimensional, we may choose a basis
$\{b_i:i\in[r]\}$ for $\M$. By non-degeneracy of $\M$, we have that
$I_H\in [\M^*\M]$ and hence there exist operators $a_i\in \M$ for $i\in [r]$, such that
\[
I_H=\sum_{i=1}^r a_i^*b_i.
\]
Set
\[
a=\begin{bmatrix}a_1\\ \vdots\\ a_r\end{bmatrix}
\qand
b=\begin{bmatrix}b_1\\ \vdots\\ b_r\end{bmatrix}.
\]
Since $a^*b=I_H$ we have that $b\colon H\to K^{(r)}$ is injective, which implies that
\[
b^*b=\sum_{i=1}^r b_i^*b_i
\]
is injective on $H$ and hence also invertible. Since $b^*b\in [\M^*\M]$ and $[\M^*\M]$ is a unital
C*-algebra, we also have that $(b^*b)^{-1/2}\in [\M^*\M]$.
Set
\[
v_i=b_i(b^*b)^{-1/2}\in [\M\M^*\M]\subseteq \M \foral i\in [r].
\]
Then
\[
\sum_{i=1}^r v_i^*v_i
=(b^*b)^{-1/2}(\sum_{i=1}^r b_i^*b_i)(b^*b)^{-1/2}
=I_H.
\]
Thus $(v_i)_{i=1}^r$ is a family of Kraus operators defining a unital completely positive map
\[
\vphi\colon \B(K)\to \B(H)\text{ such that } \vphi(b)=\sum_{i=1}^r v_i^*bv_i.
\]
Since $v_i\in \M$, for every
$b\in \B$ and $i,j\in [r]$ we have
\[
v_i^* b\, v_j \in \M^*\B\M\subseteq \A,
\]
which yields that $\vphi(\B)\subseteq \A$.
Also, since $v_i\in \M$ and $\M$ is a $\B'$-$\A'$-bimodule we obtain that $\X_\vphi\subseteq \M$, and therefore we have
\[
\X_\vphi^*\T \X_\vphi\subseteq \M^*\T\M\subseteq \S
\qand
\X_\vphi \S \X_\vphi^*\subseteq \M\S\M^*\subseteq \T,
\]
and similarly
\[
\X_\vphi^*\B \X_\vphi\subseteq \M^*\B\M\subseteq \A
\qand
\X_\vphi \A \X_\vphi^*\subseteq \M\A\M^*\subseteq \B.
\]

Moreover, right multiplication by the invertible operator $(b^*b)^{-1/2}$ is a
vector space automorphism of $\B(H,K)$, and so
\[
\dim(\spn\{v_i:i\in [r]\})
=
\dim(\spn\{b_i(b^*b)^{-1/2}:i\in [r]\})
=
\dim(\spn\{b_i:i\in [r]\})
=r.
\]
Since $\spn\{v_i:i\in [r]\}\subseteq \M$ and $\M$ is $r$-dimensional it follows that
\[
\X_\vphi=\spn\{v_i:i\in [r]\}=\M.
\]
In particular, 
\[
[\X_\vphi H]=[\M H]=K,
\]
which by Lemma \ref{L:injectivity} is equivalent to $\vphi$ being faithful.

\medskip
\noindent
[(ii) $\Rightarrow$ (i)].
Suppose that
$\vphi\colon\B\to \A$ is a faithful unital completely positive map such that
\[
\X_\vphi^*\T\X_\vphi\subseteq \S,
\qquad
\X_\vphi \S \X_\vphi^*\subseteq \T 
\qand
\X_\vphi^*\B\X_\vphi\subseteq \A,
\qquad
\X_\vphi\A\X_\vphi^*\subseteq \B.
\]
By Lemma \ref{L:injectivity} we obtain that $\X_{\vphi}$ is non-degenerate. Hence, if we set $ \M = [\X_{\vphi} \ca(\X_{\vphi}^*\X_{\vphi})]$ we obtain a non-degenerate TRO that is moreover a $\B'$-$\A'$-bimodule, and implements the TRO-equivalences $\S\sim_{\rm TRO}\T$ and $\A\sim_{\rm TRO}\B$ by a use of Proposition \ref{prop:nondeg_TRO}.
\end{proof}

\begin{remark}
The two notions of Morita equivalence for quantum graphs that we consider in Theorem \ref{th:stronghom} and Theorem \ref{T:qgraphs_balanced} are in general distinct. For example, consider the quantum graphs $\S=M_n$ on $\A = M_n$ and $ \T= M_m$ on $\B=\bC I_m$. Then $ \S$ is clearly TRO-equivalent to $\T $ via $\M= M_{m,n}$; however $ \A$ and $\B$ are $\De$-equivalent but not TRO-equivalent. Indeed, if this was the case, we would have that $\bC I_n= \A' \cong \B'= M_m$ which is not possible for $ n,m>1$.    
\end{remark}

While the previous remark shows that in general the two notions of Morita equivalence are distinct, they coincide in the case of noncommutative graphs. In \cite[Proposition 7.2]{EKT21} it is shown that TRO-equivalence between two noncommutative graphs $\S \subseteq M_n$ and $\T \subseteq M_m$ is equivalent to the existence of a non-degenerate subspace $\X \subseteq M_{m,n} $  such that $\X^*\T \X \subseteq \S$ and $ \X \S \X^* \subseteq \T$. As shown in the next corollary the operator space $\X$ may be assumed to be merely non-degenerately acting.  In particular, TRO-equivalence implies the existence of a strong co-homomorphism between $\T$ and $\S$.

\begin{corollary} \label{T:ncgraphs}
Let $\S \subseteq M_n$ and $\T \subseteq M_m$ be noncommutative graphs.  
The following are equivalent:
\begin{enumerate}
\item $\S \sim_{\rm TRO} \T$;
\item There exists a non-degenerately acting subspace $ \X \subseteq M_{m,n}$ such that 
\[
\X^*\T \X \subseteq\S \qand \X\S\X^* \subseteq \T;
\]
\item There exists a (faithful) unital completely positive map $ \vphi \colon M_m \to M_n$ such that 
\[
\S = \T^{\leftarrow\vphi} \qand\T = \S^{\rightarrow\vphi}.
\]
\end{enumerate}
\end{corollary}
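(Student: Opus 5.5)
The plan is to prove the cycle (i) $\Rightarrow$ (ii) $\Rightarrow$ (iii) $\Rightarrow$ (i), specialising Theorem \ref{T:qgraphs_balanced} and Theorem \ref{thm:ucppullback} to $\A=M_n$, $\B=M_m$. The point is that here $\A'=\bC I_n$ and $\B'=\bC I_m$. Consequently, every subspace of $M_{m,n}$ is automatically a $\B'$-$\A'$-bimodule. Moreover, $\X_\vphi=\spn\{v_i\}$ is just the Kraus space of $\vphi$, and the algebra conditions $\X_\vphi^*\B\X_\vphi\subseteq \A$, $\X_\vphi\A\X_\vphi^*\subseteq\B$ hold trivially.

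For (i) $\Rightarrow$ (ii), I take a non-degenerate TRO $\M\subseteq M_{m,n}$ implementing $\S\sim_{\rm TRO}\T$. Non-degeneracy gives $I_n\in[\M^*\M]$ and $I_m\in[\M\M^*]$, which forces $[\M\bC^n]=\bC^m$ and $[\M^*\bC^m]=\bC^n$. So $\X:=\M$ is non-degenerately acting and satisfies the two inclusions.

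For (ii) $\Rightarrow$ (iii), I would first apply Proposition \ref{prop:nondeg_TRO} to $\X$, obtaining the non-degenerate TRO $\M=[\X\,\ca(\X^*\X)]$ implementing $\S\sim_{\rm TRO}\T$. Since $\A=M_n$ and $\B=M_m$, the inclusions $\M^*M_m\M\subseteq M_n$ and $\M M_n\M^*\subseteq M_m$ are trivial, and $\M$ is trivially a $\B'$-$\A'$-bimodule. Hence condition (i) of Theorem \ref{T:qgraphs_balanced} holds. The proof of [(i) $\Rightarrow$ (ii)] there then produces a faithful unital completely positive $\vphi\colon M_m\to M_n$ with $\X_\vphi=\M$, together with $\X_\vphi^*\T\X_\vphi\subseteq\S$ and $\X_\vphi\S\X_\vphi^*\subseteq\T$. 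Choosing Kraus operators spanning $\X_\vphi$, this says that $\vphi$ is a full strong co-homomorphism. Theorem \ref{thm:ucppullback} then gives $\S=\T^{\leftarrow\vphi}$ and $\T=\S^{\rightarrow\vphi}$.

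For (iii) $\Rightarrow$ (i), I would run Theorem \ref{thm:ucppullback} [(i) $\Rightarrow$ (ii)] backwards. It shows that $\vphi$ is a full strong co-homomorphism, so the Kraus space $\X_\vphi$ satisfies $\X_\vphi^*\T\X_\vphi\subseteq\S$ and $\X_\vphi\S\X_\vphi^*\subseteq\T$. Fullness gives $[\X_\vphi\bC^n]=\bC^m$ by Lemma \ref{L:injectivity}, and unitality $\sum v_i^*v_i=I_n$ gives $[\X_\vphi^*\bC^m]=\bC^n$. Proposition \ref{prop:nondeg_TRO} then yields $\S\sim_{\rm TRO}\T$.

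I expect the main subtle point to be the faithfulness claim in (iii). It comes for free in the proof of Theorem \ref{thm:ucppullback}: from $I_m\in\S^{\rightarrow\vphi}$ one gets $[\X_\vphi\bC^n]=\bC^m$. So the parenthetical ``faithful'' does not weaken or strengthen (iii). Beyond that, the proof only requires checking that the commutant conditions trivialise.
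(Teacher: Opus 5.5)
Your proof is correct and follows the paper's own argument: the paper obtains (i)$\Leftrightarrow$(iii) from Theorem~\ref{T:qgraphs_balanced} together with Theorem~\ref{thm:ucppullback}, using that $\A'=\bC I_n$ and $\B'=\bC I_m$ make the bimodule and algebra conditions automatic, and obtains (i)$\Leftrightarrow$(ii) from Proposition~\ref{prop:nondeg_TRO}. You simply arrange the same ingredients into a cycle and spell out the routine checks, namely that a non-degenerate TRO acts non-degenerately and that faithfulness in (iii) is automatic via Lemma~\ref{L:injectivity}.
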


\begin{proof}
The equivalence of items (i) and (iii) follows from  Theorem \ref{T:qgraphs_balanced}  and  Theorem \ref{thm:ucppullback} while the equivalence of items (i) and (ii) follows from Proposition \ref{prop:nondeg_TRO}.
\end{proof}

We now see some examples in this case.

\begin{example}
Consider
\[
\S=\operatorname{span}\{I_2,E_{12},E_{21}\}\subseteq M_2
\qand
\A=M_2,
\]
and the faithful unital $*$-representation
\[
\theta \colon M_2\to M_4,\qquad \theta(x)=x\oplus x
= v_1^* x v_1 + v_2^* x v_2,
\]
where
\[
v_1=\begin{bmatrix} I_2 & 0 \end{bmatrix},
\qquad
v_2=\begin{bmatrix} 0 & I_2 \end{bmatrix}.
\]
Then
\[
\T:=\S^{\leftarrow\theta}
=
\operatorname{span}\{v_i^*sv_j:s\in\S, i,j=1,2\}
=
\S\otimes M_2.
\]
Moreover, $\T$ is a quantum graph on
\[
\theta(M_2)
=
\left\{
\begin{pmatrix}
x&0\\
0&x
\end{pmatrix}:x\in M_2
\right\}\subseteq M_4.
\]
Hence, by Corollary~\ref{T:ncgraphs} $\S\sim_{\rm TRO} \T$. In fact, amplifications of $ \S$ are the only quantum graphs on $M_m$ that are TRO-equivalent to $\S$ (see \cite[Example 3.11]{EKT21}).
\end{example}

\begin{example}
\rm Let $\B(H) = \B(K) = M_{n}$ for some $n \in \bN$. Then if $\Phi, \Psi: M_{n}\rightarrow M_{n}$ are $*$-automorphisms of $M_{n}$ (i.e., they are describing closed quantum dynamical systems on $M_{n}$), we have $\S_{\Phi} \sim_{\rm TRO} \S_{\Psi}$. Indeed, as every $*$-automorphism of $M_{n}$ is inner, the Kraus decomposition for both are given by unitary operators $u, \wt{u} \in M_{n}$ with $\Phi(a) = uau^{*}$ and $\Psi(a) = \wt{u}a\wt{u}^{*}$, so 
\[
\S_{\Phi} = \spn\{u^{*}u\} = \mathbb{C}I_{n} = \spn\{\wt{u}^{*}\wt{u}\} = \S_{\Psi},
\]
with TRO-equivalence following trivially. 
\end{example}

\begin{example}
\rm A \textit{depolarizing channel} $\Phi_{\lambda}: M_{n}\rightarrow M_{n}$ is a quantum channel of the form 
\[
\Phi_{\lambda}(\rho) = (1-\lambda)\rho+\frac{\lambda}{n}I_{n} \text{ for some } 0 \leq \lambda \leq 1+\frac{1}{n^{2}-1},
\]
and it is \textit{completely depolarizing} if $\lambda = 1$. Let $\Phi\colon M_{n}\rightarrow M_{n}$ and $\Psi\colon M_{m}\rightarrow M_{m}$ be completely depolarizing channels on their respective systems.
Then $\S_{\Phi}\sim_{\rm TRO} \S_{\Psi}$.

Indeed, as any two states are confusable the confusability graphs for $\Phi$ and $\Psi$ are easily seen to be $\S_{\Phi} = M_{n}$ and $\S_{\Psi} = M_{m}$ and hence $\M = M_{m, n}$ is a non-degenerate TRO implementing TRO-equivalence between $\S_{\Phi}$ and $\S_{\Psi}$. 

In fact, if $\N: [m]\rightarrow [k]$ is any classical channel for which $\N(j|i) \neq 0$ for all $i \in [m]$ and $j \in [k]$, then $\S_{\Phi} \sim_{\rm TRO}\S_{\N}$ for any completely depolarizing channel $\Phi$. This follows from the previous argument, in conjunction with the fact that the Kraus operators for $\N$ (considered as a quantum channel) are $\sqrt{\N(j|i)}\epsilon_{ji}$, with all non-zero for every choice of $(i, j) \in [m]\times [k]$. 
\end{example}

Finally, we end this section by showing that this stronger notion of Morita equivalence, in the sense of Theorem \ref{T:qgraphs_balanced}, collapses into isomorphism in the case of graph operator systems.

\begin{remark}\label{R:graphstrtro}
Let $\G$ and $\H$ be graphs on $n$ and $m$ vertices, respectively, and suppose that $\S_\G\sim_{\rm TRO}\S_\H$ and $D_n\sim_{\rm TRO}D_m$ via a non-degenerate TRO $\M$ which is a $D_m$-$D_n$-bimodule. In this case, we have
\[
[\M^* \M]=[\M^* D_m \M]=D_n \qand [\M\M^*]=[\M D_n \M^*]=D_m,
\]
and by \cite[Theorem 3.2]{Ele12} we obtain a $*$-isomorphism $\pi \colon D_n \to D_m$ such that
\[
\M=\{x\in M_{m,n}: xa=\pi(a)x \foral a\in D_n\}.
\]
In particular, $n=m$ and $\pi$ is a conjugation by a permutation matrix $u\in M_n$. It then follows that $\M=D_n u=uD_n$ and hence
\[
\S_\H=[\M^* \S_\G \M]= [u^* D_n \S_\G D_n u]= u^* \S_\G u.
\]
we conclude that $u^*\S_G u=\S_\H$ and \cite[Proposition 3.1]{OP14} yields that $\G\cong \H$.
\end{remark}


\section{Noncommutative graph parameters} \label{sec:nonc}
In this section, we specialize to the case of noncommutative graphs, i.e., quantum graphs on $M_n$ for $ n \in \bN$, and investigate how TRO-equivalence of quantum graphs interacts with various noncommutative graph parameters. The following definitions are taken from \cite{BTW21,DSW13,Pau}; we indicate the source when appropriate.

Let $H$ be a finite-dimensional Hilbert space with $\dim H=k$ and let $\S\subseteq\B(H)$ be an operator system. Following \cite{BTW21,DSW13}, a set of mutually orthogonal unit vectors
\[
\{\xi_{\ell}\}_{\ell=1}^{n} \subseteq H
\]
is called $\S$-\emph{independent} if
\[
\xi_i\xi_j^* \in \S^\perp, 
\qquad i\neq j,
\]
and  $\S$-\emph{clique} if 
\[
\xi_i\xi_j^* \in \S, 
\qquad i\neq j.
\]

We recall several numerical parameters associated with $\S$, which are noncommutative analogues of classical graph parameters.
The \emph{independence number} of $\S$ is
\[
\alpha(\S)
:=
\max\Big\{
n\in\mathbb{N} :
\exists\, \{\xi_i\}_{i=1}^{n} \subseteq H
\text{ forming an $\S$-independent set}
\Big\}.
\]
The \emph{clique number} of $\S$ is
\[
\omega(\S)
:=
\max\Big\{
n\in\mathbb{N} :
\exists\, \{\xi_i\}_{i=1}^{n} \subseteq H
\text{ forming an $\S$-clique}
\Big\}.
\]
The fact that $\al$ is a quantization of the classical independence number was shown in \cite{DSW13}, and that $\omega$ is a quantisation of the classical clique number was shown \cite[Corollary 3.9]{BTW21}.
The \emph{minimal chromatic number} of $\S$, denoted $\chi_{0}(\S)$, is the least $n\in\mathbb{N}$ such that there exist
a basis $\{\xi_{i}\}_{i=1}^{k}$ of $H$ and a partition
\[
\{1,\dots,k\}=S_{1}\cup\cdots\cup S_{n},
\]
for which
\[
\xi_i\xi_j^* \in \S^\perp,
\text{ when }
i\neq j,\; i,j\in S_{\ell},\; \ell=1,\dots,n .
\]
In \cite[Theorem 14]{KM19} (see also \cite{Pau}) it is shown that this is quantization of the classical chromatic number.

\begin{remark}
Note that the definition of the minimal chromatic number $\chi_0$ is a slightly modified version of the chromatic number as defined in  \cite{KM19} and \cite{Pau}. In the original definition, the existence of an orthonormal basis is required, whereas here we require only a basis for the space. 
\end{remark}

For $n \in \bN$, we use 
\[
\S^{\otimes n} := \underbrace{\S\otimes \cdots \otimes \S}_{n \; {\rm times}} \subseteq \B(H^{n}),
\]
to denote the $n$-fold tensor product operator system of $\S$ with itself (where we use the minimal operator system tensor product). 

Let $\Phi\colon \B(H)\rightarrow \B(K)$ be a quantum channel between finite-dimensional Hilbert spaces, and $\S_{\Phi}$ be the noncommutative graph of $\Phi$. Following \cite{DSW13} the \textit{Shannon capacity of $\Phi$} is defined as
\[
c_{0}(\Phi) = \lim\limits_{n\rightarrow \infty}\sqrt[n]{\alpha(\S_{\Phi}^{\otimes n})}.
\]
By virtue of \cite[Lemma 2]{DUAN09}, every noncommutative graph $\S\subseteq \B(H)$ with $H$ finite-dimensional, is the noncommutative graph for a quantum channel $\Phi$. Thus, we define the Shannon capacity $c_{0}(\S):=c_{0}(\Phi)$ where $\S=\S_\Phi$.

In the discussion following \cite[Lemma 4.1]{BTW21} and in the discussion preceding \cite[Theorem 5.1]{BTW21} the authors define two quantizations of the classical Lov\'asz number of a graph as defined in \cite{Lov79}. See \cite[Corollary 4.6 and Proposition 5.3]{BTW21}.
We will only use the equivalent formulations of $\vartheta$ and $\wh{\vartheta}$ obtained in \cite[Theorem 5.1]{BTW21}.
Namely, for $H$ being a finite-dimensional Hilbert space, and $\S \subseteq \B(H)$ being an operator system we have
\begin{align*}
&\wh{\vartheta}(\S)^{-1} = \sup\{\inf\{\|\Phi(\rho)\|:\rho  \text { a state of } \B(H) \}:  \Phi \in \fC(\S)\},\\
&\vartheta(\S)^{-1} = \inf\{\sup\{\|\Phi(\rho)\|: \Phi \in \fC(\S)\}: \rho \text { a state of } \B(H) \}.
\end{align*}

Following \cite{LPT18}, for an arbitrary noncommutative graph $\S \subseteq \B(H)$, the \textit{quantum complexity} and \textit{quantum subcomplexity} are defined as
\begin{align*}
&\ga(\S) := \min\{k \in \bN: \text{there exists } \Phi\colon \B(H)\rightarrow M_{k} \text{ with } \S_{\Phi} = \S\},\\
&\beta(\S) := \min\{k \in \bN: \text{there exists } \Phi \colon \B(H)\rightarrow M_{k} \text{ with } \Phi \in \fC(\S)\}. 
\end{align*}
See \cite[Theorem IV.10]{LPT18}.

Finally, a noncommutative graph analogue of the \textit{Haemers bound} was introduced in \cite{GL20}, see \cite[Proposition 5]{GL20}. For a noncommutative graph $\S \subseteq \B(H)$, the Haemers bound $\mathfrak{H}(\S)$ is defined as
\[
\fH(\S) := \min\{k \in \mathbb{N}: \text{ there exists } \Phi\colon \B(H)\rightarrow M_{k} \text{ trace-preserving with } \wt{\S}_{\Phi} \subseteq \S\}. 
\]

\begin{remark}\label{R:chrqli}
It is not true that $\chi_{0}$ and $\omega$ are invariant under TRO-equivalence. This can be shown in the setting of graph operator systems. Indeed, consider two complete graphs $\K_n$ and $\K_m$ with $n\neq m$. Then $\S_{\K_n}=M_n$ and $\S_{\K_m}=M_m$ are TRO-equivalent via $\M=M_{n,m}$ and
\[
\omega(\S_{\K_n})=\chi_{0}(\S_{\K_n}) = n\neq m = \chi_{0}(\S_{\K_m})=\omega(\S_{\K_m}).
\] 
\end{remark}

We now prove the main result of this section. 

\begin{theorem}\label{th:nc_graph_parameter_invariance}
The parameters $\alpha, c_{0}, \vartheta, \wh{\vartheta}, \fH, \beta$ and $\ga$ are invariant under TRO-equivalence of noncommutative graphs.
\end{theorem}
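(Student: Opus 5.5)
By Corollary~\ref{T:ncgraphs}, if $\S\subseteq M_n$ and $\T\subseteq M_m$ are TRO-equivalent, there is a non-degenerately acting subspace $\X\subseteq M_{m,n}$ with $\X^*\T\X\subseteq \S$ and $\X\S\X^*\subseteq \T$. Equivalently, there are unital completely positive maps $\vphi\colon M_m\to M_n$ and $\psi\colon M_n\to M_m$, with Kraus spaces contained in $\X$ and $\X^*$ respectively, such that $\X_\vphi^*\T\X_\vphi\subseteq\S$ and $\X_\psi^*\S\X_\psi\subseteq\T$. The relation is symmetric, so for each parameter $p$ it suffices to prove a monotonicity inequality of the form $p(\S)\le p(\T)$ (or the reverse) whenever there is a co-homomorphism $\vphi\colon M_m\to M_n$ with Kraus operators $v_i\in M_{m,n}$, $\sum v_i^*v_i=I_n$ and $v_i^*\T v_j\subseteq \S$. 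Applying the inequality in both directions then gives equality.

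\emph{Independence number and Shannon capacity.} Let $\{\xi_\ell\}$ be $\T$-independent, i.e.\ $\xi_\ell\xi_k^*\perp \T$ for $\ell\neq k$. Pulling back along the dual channel $\Phi=\vphi_*$ does not obviously give orthonormal vectors. I would instead use the Duan--Severini--Winter characterisation: $\alpha(\S)$ is the largest $N$ such that there are states $\rho_1,\dots,\rho_N$ with $\rho_\ell \S\rho_k=0$ for $\ell\neq k$. This reduces to rank-one vectors by picking a vector in each support. Given $\S$-independent vectors $\xi_\ell\in\bC^n$, set $\rho_\ell:=\sum_i v_i\xi_\ell\xi_\ell^*v_i^*$. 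These are nonzero because $\sum_i v_i^*v_i=I$, and
\[
\rho_\ell\,\T\,\rho_k \subseteq \spn\{v_i\xi_\ell\,\big(\xi_\ell^*v_i^*\T v_j\xi_k\big)\,\xi_k^*v_j^*\}=0,
\]
since $v_i^*\T v_j\subseteq\S$ and $\xi_\ell^*\S\xi_k=0$. Hence $\alpha(\T)\ge\alpha(\S)$, and by symmetry $\alpha(\S)=\alpha(\T)$. For $c_0$, tensor powers preserve the setup: $\X^{\otimes N}$ is non-degenerately acting and satisfies the same inclusions for $\S^{\otimes N}$ and $\T^{\otimes N}$, so $\S^{\otimes N}\sim_{\rm TRO}\T^{\otimes N}$. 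Then $\alpha(\S^{\otimes N})=\alpha(\T^{\otimes N})$ for every $N$, and $c_0(\S)=c_0(\T)$.

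\emph{Channel-based parameters $\vartheta,\wh\vartheta,\beta,\ga,\fH$.} The key observation is composition. Let $\Psi\colon M_m\to M_k$ be a channel with Kraus operators $w_j$ and $\S_\Psi\subseteq\T$. Define $\Psi\circ\Phi\colon M_n\to M_k$, where $\Phi(c)=\sum_i v_icv_i^*$ is trace preserving. Its Kraus operators are $w_jv_i$, and
\[
\S_{\Psi\circ\Phi}=\spn\{v_i^*w_j^*w_{j'}v_{i'}\}\subseteq \X^*\T\X\subseteq\S .
\]
So $\Psi\mapsto\Psi\circ\Phi$ maps $\fC(\T)$ into $\fC(\S)$ without changing $k$. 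This gives $\beta(\S)\le\beta(\T)$, and the same argument with trace-preserving maps and $\wt\S$ gives $\fH(\S)\le\fH(\T)$. For $\ga$ we need equality $\S_{\Psi\circ\Phi}=\S$ when $\S_\Psi=\T$. Here I would choose $\Phi$ with $\X_\Phi$ equal to the full TRO $\M$ (as in the proof of Theorem~\ref{T:qgraphs_balanced}) and use $[\M^*\T\M]=\S$. The remaining worry is that the span $\M^*\T\M$ is already closed, which holds in finite dimensions.

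For $\wh\vartheta$, compare $\inf_\rho\|\Psi(\Phi(\rho))\|$ with $\inf_\sigma\|\Psi(\sigma)\|$. Since $\Phi$ maps states to states, the first infimum is at least the second, so $\wh\vartheta(\S)^{-1}\ge\wh\vartheta(\T)^{-1}$. For $\vartheta$ the order of sup and inf is swapped. Given $\rho$, I would bound
\[
\sup_{\Psi'\in\fC(\S)}\|\Psi'(\rho)\|\ \ge\ \sup_{\Psi\in\fC(\T)}\|\Psi(\Phi(\rho))\|\ \ge\ \vartheta(\T)^{-1},
\]
which gives $\vartheta(\S)^{-1}\ge\vartheta(\T)^{-1}$. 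Symmetry then yields equality for all of these parameters.

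\emph{Main obstacle.} The step I expect to be hardest is $\alpha$. I need to check that the DSW characterisation by (not necessarily orthogonal) states $\rho_\ell$ agrees with the definition via orthonormal $\S$-independent vectors used here. Orthogonality follows from $I\in\T$, since $\rho_\ell\rho_k=0$, so vectors chosen from the supports are orthogonal. That reduction should go through. The exact-equality step for $\ga$ is the other delicate point.
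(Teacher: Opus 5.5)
Your proof is correct and has the same architecture as the paper's. Both take Kraus operators $v_i$ with $\sum_i v_i^*v_i=I$ and $v_i^*\T v_j\subseteq\S$ from Corollary~\ref{T:ncgraphs}, prove a one-sided inequality for each parameter, and then symmetrise. Your treatment of $c_0$ (tensor powers of the implementing space) matches the paper's. So does your treatment of $\beta,\ga$, via the composition $\Psi\mapsto\Psi\circ\Phi$, which keeps the output dimension.

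Where you differ is that you prove inline what the paper cites.
\begin{itemize}
\item For $\alpha$, the paper sends $\xi_\ell$ to the $M_r(\T)$-independent vectors $V\xi_\ell$ and quotes $\alpha(M_r(\T))=\alpha(\T)$ from \cite[Remark IV.6]{LPT18}. Your states $\rho_\ell=\sum_i v_i\xi_\ell\xi_\ell^*v_i^*$ are exactly the partial traces of $V\xi_\ell\xi_\ell^*V^*$ over $\bC^r$. Choosing unit vectors in their ranges does that reduction by hand; they are mutually orthogonal because $I\in\T$ forces $\rho_\ell\rho_k=0$. So the obstacle you flag is resolved.
\item For $\vartheta$, $\wh\vartheta$ and $\fH$, the paper invokes \cite[Proposition 6.1]{BTW21} and \cite[Proposition 12]{GL20}. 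You instead read monotonicity off the formulas of \cite[Theorem 5.1]{BTW21} and the definition of $\fH$. You use only that composition with $\Phi$ maps states to states and maps $\fC(\T)$ into $\fC(\S)$. For trace-preserving $\Psi$ with $\wt\S_\Psi\subseteq\T$, it likewise gives $\wt\S_{\Psi\circ\Phi}\subseteq\spn\{v_i^*\T v_j\}\subseteq\S$.
\end{itemize}
This makes the proof self-contained and makes the direction of each one-sided inequality transparent. You get $\vartheta(\S)\le\vartheta(\T)$ and $\wh\vartheta(\S)\le\wh\vartheta(\T)$, which is the correct direction. To check, take $\S=M_n$ and $\T=\bC I_m$ with $n\le m$ and $m\ge 2$, and a single isometry $v$; then $\vartheta(\S)=1<m=\vartheta(\T)$. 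The paper records the reverse inequalities at that point. Only the symmetrised statement is used, so the theorem is unaffected.

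Two small corrections.

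First, for a merely non-degenerately acting $\X$ you cannot in general take the Kraus operators inside $\X$. For example, take $\S=\T=M_n$ and $\X=\bC x$ with $x$ invertible and $x^*x\notin\bC I$. Normalising a spanning family $\{x_j\}$ requires right multiplication by $(\sum_j x_j^*x_j)^{-1/2}\in\ca(\X^*\X)$. Take the Kraus operators instead in the TRO $\M=[\X\,\ca(\X^*\X)]$ of Proposition~\ref{prop:nondeg_TRO}. It satisfies the same two inclusions, and it is what you already use for $\ga$.

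Second, for $\ga$ your choice of a Kraus family spanning all of $\M$ works but is more than needed. For any Kraus family inside $\M$ one has $v_i\S v_j^*\subseteq\T$. Then $s=\sum_{i,j}v_i^*(v_isv_j^*)v_j$ gives $\spn\{v_i^*\T v_j\}=\S$, which is the identity $\S=\T^{\leftarrow\vphi}$ from Theorem~\ref{thm:ucppullback}.
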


\begin{proof}
Let $\S \subseteq \B(H)$ and $\T \subseteq \B(K)$  be irreducibly acting noncommutative graphs on finite-dimensional Hilbert spaces $H, K$  and suppose that $\S\sim_{\rm TRO} \T$.
By Corollary~\ref{T:ncgraphs} we may pick a  unital completely positive map $\vphi\colon \B(K)\rightarrow \B(H)$ with Kraus operators $(v_i)_{i=1}^r\subseteq \B(H,K)$ that satisfy
\[
v_{i}\S v_{j}^* \subseteq \T \qand v_{i}^*\T v_{j} \subseteq \S \foral i, j \in [r].
\]
Consider the column isometry $V:= (v_{1}, \dots, v_{r})$.

First, we show that $\alpha(\S) = \alpha(\T)$. Let $\{\xi_{\ell}\}_{\ell=1}^{m}$ be an $\S$-independent set. As $V$ is a column isometry, we claim that $\{V\xi_{\ell}\}_{\ell=1}^{m}$ is an $M_{r}(\T)$ independent set. Indeed, it follows that $\{V\xi_{\ell}\}_{\ell=1}^{m}$ is an orthonormal set, and by taking orthogonal complements 
\begin{gather}\label{eqn_hs_orth_inclusions}
v_{i}\S^{\perp}v_{j}^* \subseteq \T^{\perp} \qand v_{i}^*\T^{\perp}v_{j}  \subseteq \S^{\perp},
\end{gather}
for all $i, j \in [r]$. As $\{\xi_{\ell}\}_{\ell=1}^{m}$ is an $\S$-independent set, if $\ell \neq p$, then $\xi_{\ell}\xi_{p}^* \in \S^{\perp}$. By (\ref{eqn_hs_orth_inclusions}), we then have that $v_{i}\xi_{\ell}\xi_{p}^*v_{j}^* \in \T^{\perp}$ for $\ell \neq p$. Thus, $\{V\xi_{\ell}\}_{\ell=1}^{m}$ is an $M_{r}(\T)$-independent set. This implies $\alpha(\S) \leq \alpha(M_{r}(\T))$. Finally, using \cite[Remark IV.6]{LPT18}, we may argue $\alpha(M_{r}(\T)) = \alpha(\T)$, showing $\alpha(\S) \leq \alpha(\T)$. A symmetric argument shows that $\alpha(\T) \leq \alpha(\S)$, and thus $\alpha(\S) = \alpha(\T)$ as claimed.

We now prove that $c_{0}(\S) = c_{0}(\T)$. We rely on the (readily verified) fact that if $\S \sim_{\rm TRO} \T$, then $\S^{\otimes n} \sim_{\rm TRO} \T^{\otimes n}$  for each $n \in \bN$. By the previous result, we have that $\alpha(\S^{\otimes n}) = \alpha(\T^{\otimes n})$ for every $n \in \mathbb{N}$. Thus,
\[
c_{0}(\S) = \lim\limits_{n\rightarrow \infty}\sqrt[n]{\alpha(\S^{\otimes n})} = \lim\limits_{n\rightarrow \infty}\sqrt[n]{\alpha(\T^{\otimes n})} = c_{0}(\T).
\]

Next we prove that $\beta(\S) = \beta(\T)$ and $\ga(\S) = \ga(\T)$.
Assume that $\beta(\T) = n$ and let $\Phi\colon \B(K)\rightarrow M_{n}$ be a quantum channel in $\fC(\T)$ with Kraus decomposition 
\[
\Phi(a) = \sum_{i=1}^{\ell}w_{i}aw_{i}^* \foral a \in \B(K).
\]
The composition $\Phi\circ\vphi^{*} : \B(H)\rightarrow M_{n}$ with Kraus decomposition
\[
(\Phi\circ \vphi^{*})(a) = \sum\limits_{i=1}^{\ell}\sum\limits_{j=1}^{r}w_{i} v_{j} a v_{j}^* w_{i}^*, \foral a  \in \B(H),
\]
is another quantum channel. Furthermore, we claim $\Phi\circ \vphi^{*} \in \fC(\S)$. Indeed, as $\Phi \in \mathfrak{C}(\T)$, we have that $w_{j}^*w_{i} \in \T$ for each $i, j \in [\ell]$. Therefore, 
\[
\S_{\Phi\circ \vphi^{*}} = \spn\{v_{p}^*w_{j}^*w_{i}v_{q}: \; i, j \in [\ell], p, q \in [r]\} \subseteq \spn\{v_{p}^*\T v_{q}: \; p, q \in [r]\} \subseteq \S,
\]
by the inclusion relations on $\vphi$. Therefore, $\Phi\circ \vphi^{*} \in \fC(\S)$ as claimed, and so $\beta(\S) \leq \beta(\T)$. Under TRO-equivalence, we may use a symmetric argument to conclude $\beta(\S) = \beta(\T)$. The argument for $\ga$ is analogous.

Finally, we show that $\vartheta(\S)=\vartheta(\T)$,
$\wh{\vartheta}(\S)=\wh{\vartheta}(\T)$ and $\fH(\S) = \fH(\T)$.
The dual map 
\[
\vphi^*\colon \B(H)\to \B(K); \qquad \vphi^*(a)=\sum_{i=1}^{r}v_{i}av_{i}^* \foral a \in \B(K),
\]
is completely positive and trace-preserving and its Kraus operators satisfy $v_{i}^*\T v_{j} \subseteq \S$ for all $i, j \in [r]$. In \cite[Proposition 6.1]{BTW21} it is shown that the quantities $ \vartheta$ and $ \wh \vartheta$ are monotone under such maps. Hence, $\vartheta(\T)\le \vartheta(\S)
$ and $\wh{\vartheta}(\T)\le \wh{\vartheta}(\S)$. 
Since TRO-equivalence is symmetric, the same argument in the opposite direction yields $\vartheta(\S)=\vartheta(\T)$ and
$\wh{\vartheta}(\S)=\wh{\vartheta}(\T)$. An analogous argument and \cite[Proposition 12]{GL20} yield $\fH(\S) = \fH(\T)$, and the proof is complete.
\end{proof}

In the case of graph operator systems we obtain the following.

\begin{corollary}\label{cor:graph_parameter_invariance}
The graph parameters $\alpha, \vartheta, \fH, \beta$ and $\ga$ are invariant under TRO-equivalence of the corresponding graph operator systems. 
\end{corollary}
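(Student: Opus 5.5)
The corollary is a direct specialisation of Theorem \ref{th:nc_graph_parameter_invariance}, so the plan is to reduce to that theorem. Let $\G$ and $\H$ be graphs with $\S_\G \sim_{\rm TRO} \S_\H$. We view $\S_\G\subseteq M_n$ and $\S_\H\subseteq M_m$ as noncommutative graphs, that is, as quantum graphs on $M_n$ and $M_m$. This is legitimate because any operator system is trivially a $\bC I$-bimodule. TRO-equivalence is a property of the concrete operator systems alone, so it does not change when we forget the $D_n$-bimodule structure. Theorem \ref{th:nc_graph_parameter_invariance} then gives
\[
p(\S_\G)=p(\S_\H) \quad\text{for } p\in\{\alpha,\vartheta,\fH,\beta,\ga\}.
\]

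The remaining step is to identify each noncommutative parameter of $\S_\G$ with the corresponding classical parameter of $\G$, so that the statement can be read as invariance of graph parameters.
\begin{enumerate}
\item For $\alpha$, the equality $\alpha(\S_\G)=\alpha(\G)$ is the result of \cite{DSW13} recalled above.
\item For $\vartheta$, the quantisations in \cite{BTW21} satisfy $\vartheta(\S_\G)=\vartheta(\G)$ (see \cite[Corollary 4.6 and Proposition 5.3]{BTW21}), and this is the sense in which $\vartheta$ is meant.
\item For $\fH$, \cite[Proposition 5]{GL20} shows that $\fH(\S_\G)$ recovers the classical Haemers bound.
\item For $\beta$ and $\ga$, \cite[Theorem IV.10]{LPT18} relates them to the classical complexity and subcomplexity of $\G$.
\end{enumerate}
If any of these identifications is not exact, the corollary should be read as concerning the parameter evaluated on $\S_\G$; the invariance statement is unaffected.

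The only obstacle is this bookkeeping: the corollary must be stated in terms of the parameters as functions of the graph operator system, or supported by the cited quantisation results. No new argument is needed beyond Theorem \ref{th:nc_graph_parameter_invariance}.

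We can also combine this with Corollary \ref{cor:classical}. It gives the reformulation that these parameters agree for any two graphs with isomorphic skeletons $\G^\circ\cong\H^\circ$, in particular for clique blow-ups of isomorphic graphs. As a consistency check, $\alpha$ is clearly unchanged by replacing a vertex with a clique of true twins, in agreement with the result.
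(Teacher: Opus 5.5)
Your proposal is correct and follows the paper's route. The paper gives this corollary without a separate proof: it is the specialisation of Theorem \ref{th:nc_graph_parameter_invariance} to graph operator systems viewed as noncommutative graphs in $M_n$, with each parameter of $\G$ read as the corresponding parameter of $\S_\G$, exactly as your hedge suggests. One detail is worth adding: the proof of that theorem is written for irreducibly acting operator systems, and $\S_\G$ and $\S_\H$ are irreducibly acting by Proposition \ref{P:irrgra}, so nothing is lost in the specialisation.
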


\begin{remark}
We note that it can be the case that two graphs have equal values for several standard graph parameters, yet are not TRO-equivalent. In what follows, we rely on Corollary~\ref{cor:graph_parameter_invariance}. 
Consider the graphs $\G$ and $\H$ on four vertices shown in Figure~\ref{fig:g1}. 
\begin{figure}[!ht]
\centering
\begin{tikzpicture}
\filldraw[black] (-2, 1) circle (2pt) node[anchor=south] {$a_{1}$};
\filldraw[black] (-1, 1) circle (2pt) node[anchor=south] {$a_{2}$};
\filldraw[black] (0, 1) circle (2pt) node[anchor=south] {$a_{3}$};
\filldraw[black] (1, 1) circle (2pt) node[anchor=south] {$a_{4}$};
\filldraw[black] (3, 0) circle (2pt) node[anchor=north] {$b_{1}$};
\filldraw[black] (4.5, 0) circle (2pt) node[anchor=north] {$b_{2}$};
\filldraw[black] (4.5, 2) circle (2pt) node[anchor=south] {$b_{3}$};
\filldraw[black] (3, 2) circle (2pt) node[anchor=south] {$b_{4}$};
\draw (-2, 1)--(-1, 1);
\draw (0, 1)--(1, 1);

\draw (3, 0)--(4.5, 0);
\draw (3, 0)--(3, 2);
\draw (3, 0)--(4.5, 2);
\draw (3, 2)--(4.5, 2);
\end{tikzpicture} 
\caption{The graphs $\G$ and $\H$, respectively}
\label{fig:g1}
\end{figure}

It is readily verified that $\alpha(\G)=\alpha(\H)=2$.
Moreover, we claim that
\[
\vartheta(\G)=\vartheta(\H)=2, \quad \beta(\mathcal{G}) = \beta(\mathcal{H}) = 2, \quad \text{and} \quad \fH(\G)=\fH(\H)=2.
\]
Indeed, for both graphs the complements $\ol{\G}$ and $\ol{\H}$ are bipartite, and hence have chromatic number $2$. Using the inequalities
\[
\alpha(G)\le \vartheta(G)\le \chi_0(\ol{G}),
\]
we obtain
\[
2=\alpha(\G)\le \vartheta(\G)\le 2,
\quad 
2=\alpha(\H)\le \vartheta(\H)\le 2,
\]
which implies $\vartheta(\G)=\vartheta(\H)=2$. The same conclusion holds for the Haemers bound, since
\[
\alpha(G)\le \fH(G)\le \vartheta(G),
\]
and both bounds coincide. To see the claim on quantum sub-complexity, we rely on the equivalence $\beta(\mathcal{G}) = \xi(\ol{\mathcal{G}})$ for any finite simple graph, where $\xi$ denotes the \textit{orthogonal rank} (see \cite{SS12}). An explicit orthogonal representation for $\ol{\mathcal{G}}$ is given by assigning $e_{1}$ to $a_{1}, a_{2}$ and $e_{2}$ to $a_{3}, a_{4}$ (where $e_{1}, e_{2}$ are the canonical orthonormal basis elements for $\mathbb{C}^{2}$). Furthermore, it is easily seen that $\xi(\ol{\mathcal{G}}) > 1$, and thus $\xi(\ol{\mathcal{G}}) = 2$. Similarly, assigning $e_{1}$ to $b_{1}, b_{2}$ and $e_{2}$ to $b_{3}, b_{4}$ gives an orthogonal representation for $\ol{\mathcal{H}}$ which shows $\xi(\ol{\mathcal{H}}) = 2$. Thus, $\beta(\mathcal{G}) = \beta(\mathcal{H}) = 2$. 

We also claim that
\[
\ga(\ol{\G})=\ga(\ol{\H})=3.
\]
To see that $\ga(\ol{\H})=3$, note that $\ol{\H}$ contains an isolated vertex. Thus, in any feasible tuple $x=(x_1,x_2,x_3,x_4)$ with non-zero $x_i\in \mathbb{C}^k$, the vector corresponding to this vertex must be orthogonal to all others. The remaining three vertices form a copy of $P_3$, which imposes additional orthogonality constraints, forcing at least three mutually orthogonal non-zero vectors. Hence $k\ge 3$. It is easily verified that this bound is attained, and so $\ga(\ol{\H})=3$.
For $\ol{\G}$, note that it contains $P_4$ as a subgraph, and $\ga(P_4)=3$. Thus $\ga(\ol{\G})\ge 3$. A modification of the tuple used for $P_4$ yields a feasible tuple in $\mathbb{C}^3$ for $\ol{\G}$, and hence $\ga(\ol{\G})=3$. 

We claim that $\S_{\G} \not\sim_{\rm TRO} \S_{\H}$. Indeed, the graph $\G$ consists of two disjoint edges, so its vertices split into two equivalence classes under the true-twin equivalence relation $\approx$, each of size $2$. Hence the skeleton $\G^\circ$ consists of two isolated vertices. 
On the other hand, in $\H$ the vertices $\{b_2\}$, $\{b_3,b_4\}$ and $\{b_1\}$ form three distinct equivalence classes under $\approx$, and thus $\H^\circ$ is isomorphic to the path $P_3$. Since $\G^\circ \not\cong \H^\circ$, Corollary~\ref{cor:classical} implies that $\S_{\G} \not\sim_{\rm TRO} \S_{\H}$.
Similarly, for the complements $\ol{\G}$ and $\ol{\H}$, one checks that no two distinct vertices have identical closed neighbourhoods, and hence both graphs have trivial twin classes. Therefore, $\ol{\G}^\circ=\ol{\G} $
and $\ol{\H}^\circ=\ol{\H}$.
Since these graphs are not isomorphic, Corollary~\ref{cor:classical} yields
\( 
\S_{\ol{\G}} \not\sim_{\rm TRO} \S_{\ol{\H}}.
\)
Thus, $\G$ and $\H$ (as well as their complements) may share the same values of $\alpha$, $\vartheta$, $\fH$, $\beta$ and $\ga$, yet are not TRO-equivalent.
\end{remark}

The following result is folklore. We obtain it as a corollary of Theorem \ref{th:nc_graph_parameter_invariance}.

\begin{corollary}\label{C:fullmattro}
Let $H, K$ be finite-dimensional Hilbert spaces and let $\S \subseteq \B(H)$ be a noncommutative graph. Then $\S \sim_{\rm TRO} \B(K)$ if and only if $\S = \B(H)$. 
\end{corollary}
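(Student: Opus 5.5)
The direction ($\Leftarrow$) is immediate. If $\S=\B(H)$, then $\M:=\B(H,K)$ is a non-degenerate TRO, since $[\M^*\M]=\B(H)$ and $[\M\M^*]=\B(K)$ both contain the identities. It also satisfies $\M^*\B(K)\M\subseteq\B(H)$ and $\M\B(H)\M^*\subseteq\B(K)$. Hence $\B(H)\sim_{\rm TRO}\B(K)$.

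For ($\Rightarrow$) I will use invariance of the independence number from Theorem~\ref{th:nc_graph_parameter_invariance}.

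\emph{Step 1.} Compute $\alpha(\B(K))=1$. Since $\B(K)^\perp=\{0\}$, two orthonormal vectors $\xi_1,\xi_2$ would need $\xi_1\xi_2^*=0$, which is impossible. A single unit vector is trivially independent. So $\alpha(\B(K))=1$, and TRO-invariance gives $\alpha(\S)=1$.

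\emph{Step 2.} Show that $\alpha(\S)=1$ forces $\S=\B(H)$. Suppose $\S\neq\B(H)$ and pick a nonzero $x\in\S^\perp$. Because $\S$ is selfadjoint and contains $I_H$, the space $\S^\perp$ is selfadjoint and consists of trace-zero operators, using $\sca{I,x}=\operatorname{Tr}(x)$. Replacing $x$ by $x+x^*$ or by $i(x-x^*)$, one of which is nonzero, we may take $x=x^*\neq 0$ with $\operatorname{Tr}(x)=0$.

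The goal is then a pair of orthonormal vectors $\xi,\eta$ with $\xi\eta^*\in\S^\perp$. This is the main obstacle: $\S^\perp$ need not contain any rank-one operator, so the independence-number route may fail for noncommutative $\S$. For instance, if $\S^\perp=\bC x$ with $x$ of rank at least $2$, then no rank-one operator lies in $\S^\perp$ and $\alpha(\S)=1$, even though $\S\neq\B(H)$. Step 2 therefore cannot work in general, and I need a different invariant.

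\emph{Step 2$'$ (fallback).} Use quantum complexity $\gamma$ instead, which is also invariant by Theorem~\ref{th:nc_graph_parameter_invariance}.

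First, $\gamma(\B(K))=1$. The channel $\Phi=\operatorname{Tr}\colon\B(K)\to M_1$ has Kraus operators $e_i^*$ for an orthonormal basis $(e_i)$. Its noncommutative graph is $\spn\{e_ie_j^*\}=\B(K)$.

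Next, suppose $\gamma(\S)=1$. Then there is a channel $\Phi\colon\B(H)\to M_1=\bC$ with $\S_\Phi=\S$. Every such channel is $\Phi=\operatorname{Tr}$, because it is a trace-preserving linear functional on $\B(H)$. Since $\S_\Phi$ is independent of the Kraus representation, $\S_\Phi=\B(H)$, and so $\S=\B(H)$.

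Combining the two computations with invariance of $\gamma$ proves ($\Rightarrow$). I would therefore present the proof via $\gamma$, avoiding the obstacle in Step 2 entirely.
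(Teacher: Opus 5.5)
Your proof is correct, but it argues differently from the paper. The paper proves ($\Rightarrow$) with $\wh{\vartheta}$:
\begin{itemize}
\item every quantum channel on $\B(K)$ lies in $\fC(\B(K))$, so $\wh{\vartheta}(\B(K))=1$ by \cite[Theorem 5.1]{BTW21};
\item invariance under TRO-equivalence then gives $\wh{\vartheta}(\S)=1$;
\item the rigidity result \cite[Proposition 6.7]{BTW21} forces $\S=\B(H)$.
\end{itemize}
You use $\ga$ instead, and both endpoint computations become elementary. The trace channel with Kraus operators $e_i^*$ gives $\S_{\operatorname{Tr}}=\B(K)$. Conversely, the only quantum channel into $M_1$ is the trace, so $\ga(\S)=1$ pins down $\S=\B(H)$ with no external rigidity result. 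Your route also needs only the channel-composition part of Theorem~\ref{th:nc_graph_parameter_invariance}. The $\wh{\vartheta}$ part instead rests on the monotonicity result \cite[Proposition 6.1]{BTW21}.

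One point to make explicit. The paper only says the $\ga$ case is ``analogous'' to the $\beta$ case, but for $\ga$ you need the equality $\S_{\Phi\circ\vphi^*}=\spn\{v_p^*\T v_q : p,q\in[r]\}=\S$, not just the inclusion into $\S$. It holds because the strong co-homomorphism $\vphi$ from Corollary~\ref{T:ncgraphs} satisfies $\S=\T^{\leftarrow\vphi}$, by the first part of the proof of Theorem~\ref{thm:ucppullback}. Since your argument depends on invariance of $\ga$ specifically, state this.

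Your diagnosis that $\alpha$ cannot work is right. The paper's own example $\S=\spn\{I_2,\eps_{12},\eps_{21}\}$ has $\S^\perp=\bC\,\mathrm{diag}(1,-1)$, which contains no rank-one operator. So $\alpha(\S)=1$ while $\S\neq M_2$. Drop that exploratory step from the final write-up.

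Finally, a parameter-free argument is available if you want the shortest proof. Suppose $\M$ implements $\S\sim_{\rm TRO}\B(K)$. Then $\S\supseteq\M^*\B(K)\M$, and this set contains $m_1^*(\xi\eta^*)m_2=(m_1^*\xi)(m_2^*\eta)^*$ for all $m_1,m_2\in\M$ and $\xi,\eta\in K$. Non-degeneracy gives $H=[\M^*\M H]\subseteq[\M^*K]$. Hence $\S$ contains all rank-one operators on $H$, so $\S=\B(H)$.
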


\begin{proof}
If $\S = \B(H)$, then $\M = \B(H, K)$ is a non-degenerate TRO that implements TRO-equivalence between $\B(H)$ and $\B(K)$. Assume now that $\S \sim_{\rm TRO} \B(K)$. For any quantum channel $\Phi\colon \B(K)\rightarrow M_{d}$ we have $\S_{\Phi} \subseteq \B(K)$, and thus $\Phi \in \mathfrak{C}(\B(K))$. Using \cite[Theorem 5.1]{BTW21} we get $\wh{\vartheta}(\B(K)) = 1$ and hence Theorem \ref{th:nc_graph_parameter_invariance} yields that $\wh{\vartheta}(\S) = 1$. Finally, by \cite[Proposition 6.7]{BTW21} we have $\S = \B(H)$, as required.
\end{proof}

We finish the section with an application to connectedness in the quantum graph setting. Connectedness for quantum graphs was introduced in \cite{CDS21} in the noncommutative graph case and in \cite{Mat24} in the quantum adjacency setting.  The  definition that we use comes from the work of  \cite{CGW25} (see Theorem 3.4 and  Lemma 3.5 therein)  in which the different models were unified. 

Let $(\S,\A)$ be a quantum graph acting on a Hilbert space $H$. We say that the quantum graph
$(\S,\A)$ is \emph{connected} if
\[
\ca(\S)= \B(H).
\]

\begin{proposition}\label{P:connec}
Let $(\S,\A)$ and $ (\T,\B)$ be two irreducibly acting quantum graphs. Assume that
\[
\S \sim_{\Delta} \T.
\]
Then $(\S,\A)$ is connected if and only if $(\T,\B)$ is connected.
\end{proposition}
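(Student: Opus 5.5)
By Theorem \ref{th:stronghom}, $\S\sim_\De\T$ implies $\S\sim_{\rm TRO}\T$. By Proposition \ref{P:multeq} we may moreover choose a non-degenerate TRO $\M\subseteq\B(H,K)$ with
\[
[\M^*\T\M]=\S,\quad [\M\S\M^*]=\T,\quad [\M^*\M]=\A_\S \qand [\M\M^*]=\A_\T.
\]
The plan is to transport the property $\ca(\S)=\B(H)$ along $\M$. By symmetry of TRO-equivalence it suffices to show one direction: if $\ca(\S)=\B(H)$, then $\ca(\T)=\B(K)$.

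\textbf{Step 1.} Show that $\ca(\T)\supseteq[\M\,\ca(\S)\,\M^*]$. A word $s_1s_2\cdots s_m$ with $s_\ell\in\S$ can be approximated inside $[\M\M^*]$ after inserting units. Since $I_H\in[\M^*\M]$, we may write
\[
s_1s_2\cdots s_m=s_1(I_H)s_2(I_H)\cdots s_m
\]
and expand each $I_H$ as $\sum_k m_k^*n_k$ with $m_k,n_k\in\M$. Then
\[
x\,s_1 I_H s_2\cdots s_m\,y^*=\sum (x s_1 m_{k_1}^*)(n_{k_1}s_2 m_{k_2}^*)\cdots(n_{k_{m-1}}s_m y^*)
\]
for $x,y\in\M$. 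Each factor lies in $[\M\S\M^*]=\T$, so the whole expression lies in $\ca(\T)$. Taking spans, and noting that adjoints are handled since $\S=\S^*$, gives
\[
[\M\,\ca(\S)\,\M^*]\subseteq\ca(\T).
\]

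\textbf{Step 2.} Use connectedness. If $\ca(\S)=\B(H)$, Step 1 gives
\[
[\M\B(H)\M^*]\subseteq\ca(\T).
\]
Because $\M$ acts non-degenerately, $[\M H]=K$. It follows that $[\M\B(H)\M^*]$ contains all rank-one operators $(x\xi)(y\eta)^*$ for $x,y\in\M$ and $\xi,\eta\in H$. These span $\B(K)$, so $\ca(\T)=\B(K)$.

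The main subtlety is the word-expansion in Step 1, which requires the unit to lie in $[\M^*\M]$; this holds by non-degeneracy, and in finite dimensions the closures are spans. Note also that irreducibility is only used to pass from $\De$- to TRO-equivalence via Theorem \ref{th:stronghom}, since connectedness is a statement about the concrete $\ca(\S)$ rather than $\cenv(\S)$.
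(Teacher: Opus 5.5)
Your proof is correct, but it takes a different route from the paper's.

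The paper's argument has two steps:
\begin{itemize}
\item It cites \cite[Proposition 7.5]{EKT21} to obtain $\ca(\S)\sim_{\rm TRO}\ca(\T)$.
\item It applies Corollary~\ref{C:fullmattro} to conclude that $\ca(\S)=\B(H)$ if and only if $\ca(\T)=\B(K)$. That corollary is itself proved through heavy machinery: the TRO-invariance of $\wh{\vartheta}$ (Theorem~\ref{th:nc_graph_parameter_invariance}), together with the fact from \cite{BTW21} that $\wh{\vartheta}(\S)=1$ forces $\S=\B(H)$.
\end{itemize}

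You instead work directly with the implementing TRO $\M$:
\begin{itemize}
\item Inserting $I_H=\sum_k m_k^*n_k$ between the letters of a word gives $[\M\,\ca(\S)\,\M^*]\subseteq\ca(\T)$. By symmetry, also $[\M^*\ca(\T)\M]\subseteq\ca(\S)$. So, with the same $\M$, you have in effect reproved the concrete fact $\ca(\S)\sim_{\rm TRO}\ca(\T)$ that the paper imports.
\item Non-degeneracy, $[\M H]=K$, then shows that $[\M\B(H)\M^*]$ contains all rank-one operators on $K$.
\end{itemize}

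Your Step 2 is a two-line elementary proof of the needed direction of Corollary~\ref{C:fullmattro}. As a result, your argument is self-contained and avoids the Lov\'asz-number results entirely. The paper's version is shorter only because it leans on those earlier results, and it serves mainly to showcase the parameter-invariance theorem.

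The appeal to Proposition~\ref{P:multeq} is unnecessary, since any non-degenerate TRO implementing $\S\sim_{\rm TRO}\T$ works. It is harmless, though. Your remark that irreducibility is needed only for the passage from $\De$- to TRO-equivalence is accurate.
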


\begin{proof}
By \cite[Proposition 7.5]{EKT21},  we have $ \ca(\S) \sim_{\rm TRO} \ca(\T)$. Hence, by Corollary \ref{C:fullmattro} we obtain that $\ca(\S)= \B(H)$ if and only if $\ca(\T)= \B(K)$.
\end{proof}

In particular, for graph operator systems we have the following.

\begin{corollary}
Let $\G$ and $\H$ be graphs and suppose that $\S_{\G}\sim_{\Delta}\S_{\H}$. Then $\G$ is connected if and only if $\H$ is connected.
\end{corollary}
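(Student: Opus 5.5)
The plan is to reduce the statement to Proposition~\ref{P:connec} by passing through graph operator systems. First, Proposition~\ref{P:irrgra} shows that $\S_\G$ and $\S_\H$ act irreducibly. Each is a quantum graph over the diagonal algebra, so $(\S_\G, D_n)$ and $(\S_\H, D_m)$ are irreducibly acting quantum graphs. Since $\S_\G\sim_\De\S_\H$, Proposition~\ref{P:connec} then gives: $(\S_\G,D_n)$ is connected exactly when $(\S_\H,D_m)$ is, that is, $\ca(\S_\G)=M_n$ if and only if $\ca(\S_\H)=M_m$.

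The remaining step is the classical dictionary: for a graph $\G$ on $n$ vertices, $\G$ is connected if and only if $\ca(\S_\G)=M_n$. I would prove this through the block structure already used in the proof of Proposition~\ref{P:irrgra}.

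Suppose $\G$ is connected. Then $D_n\subseteq\S_\G$ and $\eps_{xy}\in\S_\G$ for every edge $x\sim y$. Given any two vertices $x,z$, take a path $x=x_0\sim x_1\sim\cdots\sim x_\ell=z$. The product $\eps_{x_0x_1}\eps_{x_1x_2}\cdots\eps_{x_{\ell-1}x_\ell}=\eps_{xz}$ lies in $\ca(\S_\G)$. Hence all matrix units lie in $\ca(\S_\G)$, so $\ca(\S_\G)=M_n$.

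Conversely, suppose $\G$ is disconnected, and let $C$ be a connected component with $\emptyset\neq C\neq V(\G)$. Set $p=\sum_{x\in C}\eps_{xx}$. Every generator $\eps_{xy}$ with $x\simeq_\G y$ has $x,y$ either both in $C$ or both outside $C$, so it commutes with $p$. Thus $p\in\ca(\S_\G)'$, and $p$ is a nontrivial projection. This shows $\ca(\S_\G)\neq M_n$.

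I expect no real obstacle. The only point needing care is noting that the paper's notion of connectedness for quantum graphs, $\ca(\S)=\B(H)$, specialises to classical graph connectivity via the computation above. The hypotheses of Proposition~\ref{P:connec} (irreducibility and $\De$-equivalence) are supplied directly by Proposition~\ref{P:irrgra} and the assumption.
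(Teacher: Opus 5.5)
Your proof is correct and follows the same route as the paper. You get irreducibility of $\mathcal{S}_{\mathcal{G}}$ and $\mathcal{S}_{\mathcal{H}}$ from Proposition~\ref{P:irrgra}, apply Proposition~\ref{P:connec}, and then use the dictionary ``$\mathcal{G}$ is connected if and only if $\mathrm{C}^*(\mathcal{S}_{\mathcal{G}})=M_n$''. The paper cites \cite[Corollary 3.4]{CDS21} for that dictionary, whereas you prove it directly (path products of matrix units in one direction, a component projection in the commutant in the other); this is valid and works straight from the paper's definition of connectedness.
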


\begin{proof}
It follows from \cite[Corollary 3.4]{CDS21} that $\G$ is connected if and only if $\S_{\G}$ is connected.
The result now follows from Proposition \ref{P:connec}.
\end{proof}


\subsection*{Acknowledgments} The authors would like to thank John Byrne for helpful comments, in particular for references on true-twin equivalence classes of a graph. The authors are grateful to Michalis Anoussis, George Eleftherakis, Mahya Ghandehari, Evgenios Kakariadis, Aristides Katavolos and Ivan G. Todorov for their helpful discussions and suggestions.

The first, third and last named authors acknowledge that this research work was supported within the framework of the National Recovery and Resilience Plan Greece 2.0, funded by the European Union - NextGenerationEU (Implementation Body: HFRI. Project name: Noncommutative Analysis: Operator Systems and Nonlocality. HFRI Project Number: 015825).
The second named author acknowledges support from the U.S. Department of Defense, Basic Research Office, under Vannevar Bush Faculty Fellowship grant N00014-21-1-2946, managed by the Office of Naval Research. 
The last named author acknowledges that this research work was supported by the Hellenic Foundation for Research and Innovation (HFRI) under the 5th Call for HFRI PhD Fellowships (Fellowship Number: 19145).
This material is based upon work supported by the Swedish Research Council under grant no. 2021-06594 while the first, second and last named authors were in residence at Institut Mittag-Leffler in Djursholm, Sweden during the Spring semester 2026.
The authors would like to thank the institute for their hospitality.

\subsection*{Open acess statement}
For the purpose of open access, the authors have applied a Creative
Commons Attribution (CC BY) license to any Author Accepted Manuscript (AAM) version
arising.


\end{document}